\newcommand{\R}{{\mathbb R}}
\newcommand{\N}{{\mathbb N}}
\newcommand{\mat}{\mathbf}  % e.g. $\mat{A}\vec{x} = \vec{b}$
\newcites{ES}{References}     %% replace by your initials e.g., LG -> HK
\begin{document}
	
\title*{Compressive Sensing and Neural Networks from a Statistical Learning Perspective}
% Use \titlerunning{Short Title} for an abbreviated version of
% your contribution title if the original one is too long
\author{Arash Behboodi, Holger Rauhut and Ekkehard Schnoor}
% Use \authorrunning{Short Title} for an abbreviated version of
% your contribution title if the original one is too long
\institute{
Arash Behboodi \at Institute for Theoretical Information Technology, RWTH Aachen University, Germany \\ \email{arash.behboodi@ti.rwth-aachen.de}
\and 
Holger Rauhut \at Chair for Mathematics of Information Processing, RWTH Aachen University, Germany \\
\email{rauhut@mathc.rwth-aachen.de}
\and 
Ekkehard Schnoor \at Chair for Mathematics of Information Processing, RWTH Aachen University, Germany \\ \email{schnoor@mathc.rwth-aachen.de}
%\and 
}
%
% Use the package "url.sty" to avoid
% problems with special characters
% used in your e-mail or web address
%
\maketitle

\abstract*{Each chapter should be preceded by an abstract (no more than 200 words) that summarizes the content. The abstract will appear \textit{online} at \url{www.SpringerLink.com} and be available with unrestricted access. This allows unregistered users to read the abstract as a teaser for the complete chapter.
Please use the 'starred' version of the \texttt{abstract} command for typesetting the text of the online abstracts (cf. source file of this chapter template \texttt{abstract}) and include them with the source files of your manuscript. Use the plain \texttt{abstract} command if the abstract is also to appear in the printed version of the book.}

%\abstract{Each chapter should be preceded by an abstract (no more than 200 words) that %summarizes the content. The abstract will appear \textit{online} at %\url{www.SpringerLink.com} and be available with unrestricted access. This allows %unregistered users to read the abstract as a teaser for the complete chapter.\newline\indent
%Please use the 'starred' version of the \texttt{abstract} command for typesetting the text of %the online abstracts (cf. source file of this chapter template \texttt{abstract}) and include %them with the source files of your manuscript. Use the plain \texttt{abstract} command if the %abstract is also to appear in the printed version of the book.}

\abstract{
Various iterative reconstruction algorithms for inverse problems can be unfolded as neural networks. Empirically, this approach has often led to improved results, but
theoretical guarantees are still scarce. While some progress on generalization properties of neural networks have been made, great challenges remain. In this chapter, we discuss and combine these topics to present a generalization error analysis for a class of neural networks suitable for sparse reconstruction from few linear measurements. The hypothesis class considered is inspired by the classical iterative soft-thresholding algorithm (ISTA). The neural networks in this class are obtained by unfolding iterations of ISTA and learning some of the weights. 
% , the resulting neural network has recurrent structure and 
% By allowing different degrees of weight-sharing between the layers, it contains different neural network types, ranging from recurrent ones to networks more similar to standard feedforward neural networks. 
Based on training samples, we aim at learning the optimal network parameters via empirical risk minimization and thereby the optimal network that reconstructs signals from their compressive linear measurements. In particular, we may learn a sparsity basis that is shared by all of the iterations/layers and thereby obtain a new approach for dictionary learning.
For this class of networks, we present a  generalization bound, which is based on bounding the Rademacher complexity of hypothesis classes consisting of such deep networks via Dudley's integral. 
%%%%%%%%%%%%%%%%%%%%%%%%%%%%%%%%%%%%%%%%%%%%%%%%%%%%%%%%%%%%%%%%%
Remarkably, under realistic conditions, the generalization error scales only logarithmically in the number of layers, and at most linear in number of measurements.
%As a particular application, we illustrate the discussion by considering the problem of learning a dictionary suitable for reconstruction by training a neural network.
}

 \allowdisplaybreaks
 %%%%%%%%%%%%%%%%%%%%%%%%%%%%%%%%%%%%%%%%%%%%%%%%%%%%%%%%%%%%%%%%%%%%%%%%%%%%%%%%%%%%%%%%%%%%
\section{Introduction}
\label{ABHRESsec:1}
%%%%%%%%%%%%%%%%%%%%%%%%%%%%%%%%%%%%%%%%%%%%%%%%%%%%%%%%%%%%%%%%%%%%%%%%%%%%%%%%%%%%%%%%%%%%
 
Learning representations of, or extracting features from, data is an important aspect of deep neural networks. In the past decade, this approach has led to impressive results and achieved state-of-the-art performances, e.g., for various classification tasks. However, due to the black-box nature of the end-to-end learning of neural networks, such features are usually abstract and difficult to interpret. On the other hand, algorithms such as the iterative soft-thresholding algorithm (ISTA) can be regarded as neural networks. Thus, with the help of modern deep learning software libraries, they can easily be implemented and optimized, such that the trained parameters can adapt to data sets of interest. When such algorithms are well understood, it can be possible to transfer results shown for the classical variant to their neural network variant and in this way increase our understanding of deep neural networks. 
% One variant of this is the \textcolor{blue}{ iterative soft-thresholding algorithm (??; common name??) (DISTA)}, which we discuss in the present work, where we consider a joint reconstruction and dictionary learning problem.
% One variant of this
A class of neural networks that we discuss in the present work aims at joint reconstruction and dictionary learning problem based on unfolding iterative soft-thresholding algorithm. Here, unfolding means that each step of an iterative algorithm constitutes a neural network layer whose parameters can be learned from data.

Here, the learned representation (a dictionary) is a very well-understood model in image and signal processing, which can be easily interpreted and visualized. As a practical application, one may think of reconstructing images from measurements taken by a medical imaging device. Instead of only trying to reconstruct the image, we would like to implicitly learn also a meaningful representation system which is adapted to the image class of interest, and leads to good generalization (e.g., when taking measurements of new patients). More generally, this is the approach of solving inverse problems in a data-driven way, e.g., by training neural networks 
\citeES{arridge2019solving, gottschling2020troublesome}.

%One of the mysteries of deep neural networks is why in practice they generalize so well, despite often being overparameterized, i.e., the number of trainable weights being larger than the sample size \citeES{neyshabur2017exploring, neyshabur_role_2019}. 
%Various techniques have been tried to answer this question,
%such as implicit bias \citeES{neyshabur2014search},
%a compression approach \citeES{arora2018stronger}, 
%and a PAC-Bayesian approach %\citeES{neyshabur_pac-bayesian_2018}.
%However, the possible explanations for the generalization of deep neural networks remain unsatisfactory \citeES{nagarajan2019uniform}, and the search for good generalization measures is still ongoing \citeES{jiang2019fantastic}.

{
The natural question arises how well these learned reconstruction methods work. We take the viewpoint of statistical learning theory and assume the data (signals, images etc.) to be generated independently by some unknown distribution. Generalization bounds give probabilistic estimates on the difference between the true error (with respect to the unknown distribution) and the empirical error for a hypothesis function. Thereby, such bounds predict how well a learned neural network performs on yet unseen data. By now, classical results bound the generalization error in terms of the VC-dimension or based on the Rademacher complexity \citeES{shalev-shwartz_understanding_2014,bartlett_rademacher_2002}. More recent methods include a compression approach \citeES{arora2018stronger} and a PAC-Bayesian approach \citeES{neyshabur_pac-bayesian_2018}. So far, generalization properties of neural networks have been studied mostly in the context of classification using feed-forward neural networks, see, e.g.,\ \citeES{bartlett_spectrally-normalized_2017,golowich_size-independent_2018,neyshabur_pac-bayesian_2018}. Especially, in the overparametrized scenario with more network parameters than samples which is common in deep learning, it is still a mystery why learned networks generalize very well, and present bounds cannot yet explain their success \citeES{zhang_understanding_2017,nagarajan2019uniform,jiang2019fantastic}, although some works attribute this to the so-called implicit bias of learning algorithms \citeES{neyshabur2017exploring,neyshabur_role_2019,neyshabur2014search,chou2020} such as the commonly used (stochastic) gradient descent. We will, however, not pursue this direction further in this chapter.}

%OLD
%The case studied here has received less attention so far from the perspective of %generalization. Concretely, our reconstruction problem is a regression problem, %and the network is in fact a recurrent neural network.

The case studied here, a recurrent neural network used for a regression problem, 
has received less attention so far from the perspective of generalization.

Due to the weight sharing, this is a non-overparameterized network. However, it is straightforward to decouple the layers and thus obtain a network which is more similar to standard feed-forward neural networks.
Furthermore, we impose an orthogonality constraint on the dictionary, which in fact {constitutes the learned parameters of the network}. We derive generalization bounds for such thresholding networks with orthogonal dictionaries. {In order to upper bound the Rademacher complexity of the hypothesis class consisting of such deep networks, we apply a generalization of Talagrands contraction principle \citeES{ledoux_probability_2011} for
 vector-valued functions, which is typically not needed when considering real-valued hypothesis classes, e.g., with the ramp loss (applied to the margin) in a multiclass classification problem \citeES{bartlett_spectrally-normalized_2017}.
A similar idea for multi-class classification tasks has been tried in \citeES{neyshabur_role_2019}.
We further estimate the resulting expectation of the supremum of a certain Rademacher process via Dudley's integral (which in particular involves covering numbers) to upper bound the Rademacher complexity of hypothesis classes consisting of such deep networks.} %Since the problem is essentially a regression problem, 

{
Sample complexity of dictionary learning has been studied before in the literature \citeES{grsc10,vainsencher2011sample,sc14,gribonval2015sample,pmlr-v80-georgogiannis18a}. The authors in \citeES{vainsencher2011sample} also use a Rademacher complexity analysis for dictionary learning, but they aim at sparse representation of signals rather than reconstruction from compressed measurements and moreover, they do not use neural network structures.
Fundamental limits of dictionary learning from an information-theoretic perspective have been studied in \citeES{6952232, 7378975}. Unique about our perspective and different to the cited papers is our approach for determining the sample complexity based on learning a dictionary by training a neural network.}

{This chapter is structured as follows. In Section \ref{ABHRESsec:2}, we introduce learned soft iterative thresholding architecture, define the generalization error and review some of the related works. We discuss works on generalization bounds for deep neural networks in Section \ref{ABHRESsec:3} and introduce Rademacher complexity analysis. The main result of this chapter with detailed proofs is given in Section \ref{ABHRESsec:4}. Finally, we present the numerical results in Section \ref{ABHRESsec:5}. 
}

\subsection*{Notation}

%Mostly, our notation is standard in the mathematical literature, but let us point out.
%However, 
%Let us fix some notation.
%before we continue with the main part of the chapter.
Vectors $\vec{x} \in \R^N$ and matrices $\mat{A} \in \R^{n \times N}$ are denoted with bold letters, unlike scalars $\lambda \in \R$.  We will denote the spectral norm by 
$\| \mat{A} \|_{2 \to 2}$ and the Frobenius norm by $\| \mat{A} \|_F$. 
The $N \times m$ matrix $\mat{X}$ contains the data points, 
$\vec{x}_1, \dots, \vec{x}_m \in \R^N$ as its columns,
analogously $\mat{Y} \in \R^{n \times m}$ to collect the measurements $\vec{y}_1, \dots, \vec{y}_m \in \R^n$. 
%Analogously $\mat{Y} \in \R^{n \times m}$ denotes the matrix collecting the %measurements $\vec{y}_1, \dots, \vec{y}_m \in \R^n$. 
As a short notation for indices we use $[m] := \{1, \dots, m\}$. 
To make the notation more compact, with a slight abuse of notation, for functions $f: \R^n \to \R^N$ we denote by 
$f(\mat{Y})$ the matrix whose $i$-th column is $f(\vec{y}_i)$. The unit ball of the $n$-dimensional normed space $\mathbb{R}^n$ is denoted by 
$B_{ \| \, \cdot \, \|}^n : = \{\vec{x} \in \mathbb{R}^n: \| \vec{x} \| \leq 1\}$. 
The covering number $\mathcal{N}\left(\mathcal{M}, d, \epsilon \right)$ of a metric space $(\mathcal{M}, d)$ at level $\epsilon$ is defined as the smallest number of balls of radius $\epsilon$ with respect to $d$ required to cover $\mathcal{M}$. 
When the metric is induced by some norm, we write $\mathcal{N} \left(\mathcal{M}, \| \, \cdot \, \|, \epsilon \right)$. 
We denote the $N$-dimensional orthogonal group by $O(N)$.

%OLD
%When we consider subsets of normed spaces where the metric is induced by the norm, %we write $\mathcal{N} \left(\mathcal{M}, \| \, \cdot \, \|, \epsilon \right)$. 
%We denote the $N$ dimensional orthogonal group by $O(N)$.

%%%%%%%%%%%%%%%%%%%%%%%%%%%%%%%%%%%%%%%%%%%%%%%%%%%%%%%%%%%%%%%%%%%%%%%%%%%%%%%%%%%%%%%%%%%%
\section{Deep Learning and Inverse Problems}
\label{ABHRESsec:2}
%%%%%%%%%%%%%%%%%%%%%%%%%%%%%%%%%%%%%%%%%%%%%%%%%%%%%%%%%%%%%%%%%%%%%%%%%%%%%%%%%%%%%%%%%%%%

% \textbf{TOPICS:} proposal, references section from the preprint, oldies: Yan Le Cun (LISTA), dictionary learning, (Neurally Augmented) ALISTA, gated lista, vision oriented applications

{
During recent years, many works studied the application of neural networks in solving inverse problems (see for example \citeES{genzel_solving_2020,daras_intermediate_2021}). In this work, we focus on joint dictionary learning and sparse recovery using neural networks.} 
Compressive sensing using dictionaries has been studied before, but, in contrast to the scenario discussed here, typically using a fixed (and possibly even redundant) dictionary and a random measurement matrix \citeES{rauhut2008compressed}. 
The idea of interpreting {thresholded} gradient-steps of iterative algorithms such as ISTA \citeES{daubechies2004iterative} for sparse recovery as layers of neural networks is well-known since \citeES{gregor2010learning} and has since then been an active research topic, see, e.g., \citeES{Wu2020Sparse,liu2018alista, chen2018theoretical, kamilov2016learning, mousavi2015deep, xin2016maximal}. Thresholding networks fall into the larger class of proximal neural networks studied in \citeES{hahene19}. 
%The central problem %of sparse coding 
{The key aspect is to learn weight matrices for an unfolded version of ISTA.} Different works focus on different parametrization of the network for faster convergence and better reconstructions. Learning the dictionary can also be implicit in these works.  In this chapter, we consider algorithms that try to find a dictionary suitable for reconstruction. Some of the examples of these algorithms are the recently suggested {Learning ISTA (LISTA) \cite{gregor2010learning}}, Ada-LISTA \citeES{aberdam2020ada} and convolutional sparse coding \citeES{sreter2018learned} which learn efficient sparse and low-rank models \citeES{sprechmann2015learning}. 
Like many other related papers, such as ISTA-Net \citeES{zhang2018ista}, 
these methods are  mainly motivated by applications like inpainting \citeES{aberdam2020ada}. 

Instead of novel algorithmic aspects, our contribution is to conduct a generalization analysis for these algorithms, which to the best of our knowledge has not been addressed in the literature before in this particular setting. In this way, we connect this line of research with recent developments \citeES{golowich_size-independent_2018, bartlett_spectrally-normalized_2017} in the study of generalization of deep neural networks.

\vspace{1cm}

\subsection{Learned Iterative Soft-Thresholding}
\label{ABHRESsubsec:1}

%As an introductory example, which will serve as the main illustration in this chapter, 
Let us begin by recalling the well-known iterative soft-thresholding algorithm (ISTA) and how it can be interpreted as a neural network. 
Given a high-dimensional $s$-sparse signal $\vec{x} \in \R^N$ and a measurement matrix $\mat{A} \in \R^{n \times N}$ (i.e., taking $n$ linear measurements, with typically $s \ll N$), we would like to recover $\vec{x}$
from given $\vec{y} = \mat{A} \vec{x}$. 
Although this is an under-determined linear system of equations, under certain conditions on the 
signal (typically, as already mentioned above: sparsity) and on the (random) measurement matrix (null space property, restricted isometry property) the true signal $\vec{x}$ can be recovered \cite{foucart_mathematical_2013}. 
{A well-known reconstruction method is $\ell_1$-minimization, which consists in computing a minimizer of the convex optimization problem
\begin{equation}\label{ABHRES:l1:min}
\min_{\vec{x} \in \R^N} 
\frac{1}{2} \|\mat{A} \vec{x} -\vec{y}\|_2^2 + \| \vec{x} \|_1
\end{equation}
%\begin{equation}\label{ABHRES:l1:min}
%\min_{\vec{x} \in \mathbf{R}^N} \|\vec{x}\|_1 
%\quad \mbox{ subject to } \quad
%\mat{A} \vec{x} = \vec{y}, 
%\end{equation}
where $\|\vec{x}\|_1 = \sum_{\ell=1}^N |x_\ell|$ is the $\ell_1$-norm.
An actual algorithm for computing such minimizer} 
is {ISTA}
%the so-called iterative soft-thresholding (ISTA) 
\citeES{daubechies2004iterative}, 
where we initialize $\vec{x}^{0} = \vec{0}$, and then recursively {compute}
\begin{eqnarray}
\vec{x}^{k+1} 
&=& S_{\tau \lambda}
    \left[\vec{x}^{k}  + \tau \mat{A}^\top(\vec{y} - \mat{A}\vec{x}^{k} )\right] \nonumber \\
&=& S_{\tau\lambda}\left[ \left(\mat{I}  - \tau \mat{A}^\top \mat{A}\right )\vec{x}^{k}   + \tau \mat{A}^\top \vec{y} \right],
\label{ABHRESeq:01}
\end{eqnarray}
 where $\lambda$ and $\tau$ are parameters of the algorithm, and $S_\lambda$ (applied entry-wise) is the shrinkage operator defined as
\begin{eqnarray}
    S_\lambda: \R  \to \R, \qquad  x & \mapsto     
    \begin{cases}
      0 & \mathrm{if} \;  |x| \; \leq \lambda, \\
      x - \lambda \operatorname{sign}(x)       & \mathrm{if} \; |x| \; > \lambda,
    \end{cases}
    \label{ES:def_Staulambda}
\end{eqnarray}
which can also be expressed in closed form as 
$S_{\lambda}(x) = \operatorname{sign}(x) \cdot \max(0,|x| - \lambda)$ for any $x \in \R$. {It is well-known, see e.g.\ \cite{daubechies2004iterative}, that $\vec{x}^{k}$ converges to a minimizer of \eqref{ABHRES:l1:min} under the condition} 
\begin{equation}\label{ABHRES:tau:cond}
\tau \|\mat{A}\|_{2 \to 2}^2 \leq 1.
\end{equation}

% \textcolor{red}{(TO DO: citation, convergence guarantuees, sparsity, number of measurements)}

Note that \eqref{ABHRESeq:01} can be interpreted as a layer of a neural network with weight matrix $\mat{I}  - \tau \mat{A}^\top \mat{A}$, bias $\tau \mat{A}^\top \vec{y}$ and activation function $S_{\tau\lambda}$. 
As a side remark, let us observe that $S_\lambda$ can be written as the sum of two
\textit{rectified linear units} via 
$S_\lambda(x) = \operatorname{ReLU}(x-\lambda) - \operatorname{ReLU}(-x-\lambda)$. {Here,} $\operatorname{ReLU}(x) = \max(0,x)$ is one of the most popular activation functions used by deep learning practitioners, so that it is also often the default choice for theoretical investigations. While this may be regarded as a natural connection between ISTA and neural networks, we will make no more use of it, as it turned out to be convenient enough to work with $S_\lambda$ as the activation function itself.

This interpretation of ISTA as an \textit{unfolded neural network} has been studied for the first time in \citeES{gregor2010learning} {leading to the introduction of LISTA}. {Since then it has inspired research at the intersection of neural networks and inverse problems in recent years and many variants of neural network enhanced iterative thresholding algorithms have been proposed by now.}
%LISTA learned neural-network enhanced variants of ISTA of LISTA exist
%has been coined \textcolor{blue}{learned iterative soft-thresholding algorithm (LISTA)}, and many variants thereof exist. Here, we continue to reformulate this algorithm as a statistical learning problem using neural networks, while other variants of LISTA as well as applications are discussed below.
%The example introduced here will serve as our main example, and it will be revisited later in Section 
%\ref{ABHRESsubsec:2}.

Note that in the current form ISTA only takes the form of a neural network, but has no trainable parameters. To introduce trainable parameters, one may consider the following scenario. Namely, let us be given a class of signals $\vec{x} \in \R^N$ which are not necessarily sparse themselves, but sparsely representable with respect to a dictionary ${\mat{\Phi}}_o \in \R^{N \times N}$. In other words, for each $\vec{x}$ there is a sparse vector $\vec{z} \in \R^N$ such that $\vec{x} = {\mat{\Phi}}_o \vec{z}$. The dictionary ${\mat{\Phi}}_o$ is assumed to be unknown. 
%%%
It is possible to extend to overcomplete dictionaries, but we will stick to bases for the sake of simplicity.

%We can extend this definition to over-complete dictionaries too.

% \textcolor{red}{Use $\Psi$ here? bad notation, since $\Psi$ is used later for the transform after the final layer?}    

We would like to learn a dictionary suitable for
{sparse reconstruction} 
%decoding purpose 
from a training sequence 
$\mathcal{S} = \left((\vec{x}_i,\vec{y}_i)\right)_{i = 1, \dots, m}$ with i.i.d.\ samples drawn from an (unknown) distribution $\mathcal{D}$. 
Formally, this is a distribution over the $\vec{x}_i$, and then the corresponding measurements $\vec{y}_i$ are given by  $\vec{y}_i = \mat{A}\vec{x}_i$, with $ \mat{A}$ being fixed. {We assume that the signals $\vec{x}$ in the class are bounded by a value, say $B_{\operatorname{in}}$, in the $\ell_2$-norm.} 

While taking the measurements $\vec{y} = \mat{A}\vec{x} =: \operatorname{enc}_\mat{A} (\vec{x})$
may be interpreted as \textit{encoding} the signal $\vec{x}$ into $\vec{y}$, corresponding to a shallow, one-layer linear neural network (which is deterministic, when the measurement matrix $\mat{A}$ is considered to be fixed), the \textit{decoder} is based on the unfolded version of the iterative soft thresholding algorithm (ISTA) with $L$ iterations as follows. 
For a fixed stepsize $\tau > 0$, and a fixed $\lambda>0$,
the first layer is defined by $f_1(\vec{y}) =S_{\tau\lambda}(\tau (\mat{A} \mat{\Phi})^\top \vec{y})$.
For the  iteration (or layer, respectively) $l>1$, the output is given by
\begin{eqnarray}
f_l(\vec{z}) \label{ABHRES:def:layer}
&=& S_{\tau \lambda}
    \left[\vec{z} + \tau (\mat{A} \mat{\Phi})^\top(\vec{y}-(\mat{A} \mat{\Phi})\vec{z})\right] \\
&=& S_{\tau\lambda}\left[ \left(\mat{I}  - \tau \mat{\Phi}^\top  \mat{A}^\top \mat{A}\mat{\Phi} \right )\vec{z}          + \tau (\mat{A}\mat{\Phi} )^\top \vec{y} \right], \notag
\end{eqnarray}
which again can be interpreted as a layer of a neural network with weight matrix $\mat{I}  - \tau \mat{\Phi}^\top  \mat{A}^\top \mat{A}\mat{\Phi}$, bias $\tau (\mat{A}\mat{\Phi} )^\top \vec{y}$ and activation function $S_{\tau\lambda}$, where the trainable parameters are the entries of $\mat{\Phi}$.
Note that for $l>1$, all $f_l$ coincide as functions on $\R^N$. The index then refers to the iteration step or layer of the neural network, respectively.
Then we denote the concatenation of $l$ such layers as $f_\mat{\Phi}^l$, 
i.e., for $\mat{\Phi}$ in every layer and given by
\begin{equation}
f_\mat{\Phi}^L(\vec{y}) = f_L\circ f_{L-1}\dots \circ f_1(\vec{y}),
%f_\mat{\Phi}^L(\vec{y}) = \sigma(\mat{\Phi} f_L\circ f_{L-1}\dots \circ f_1(\vec{y}))
\label{ES:eq:thresholding_networks}
\end{equation}
{Note that, strictly speaking, the vector $\vec{y}$ will also be an input to the subsequent layers $f_2$, $f_3$ etc., but to simplify the notation, we do not write it explicitely after each layer. This point will not be of major importance for our derivations throughout this chapter. }
% \textcolor{blue}{TBD: The vector $\vec{y}$ is also a direct input to the subsequent layers. We still need to decide how to write this. One possibility is to simply remark that strictly speaking $\vec{y}$ will also be an input to $f_2$, $f_3$ etc., but that this point will not be of major importance later on.}

For an actual reconstruction we need to apply the dictionary $\mat{\Phi}$ again after the final layer. This means, a decoder (for a fixed number of layers $L$) is a neural network with shared weights
\begin{equation*}
\operatorname{dec}_\mat{\Phi}^L(\vec{y}) 
= 
\mat{\Phi} f_L\circ f_{L-1}\dots \circ f_1(\vec{y})
= 
\mat{\Phi} f_\mat{\Phi}^L(\vec{y}) .
\end{equation*}
For technical reasons which will become apparent later in the proofs in Section 3, we will add an additional function $\sigma$ after the final layer. Different choices are possible here; we consider the choice
\begin{eqnarray}
     \sigma: \R^N\to\R^N, \qquad  \vec{x} & \mapsto     
    \begin{cases}
      \vec{x} & \mathrm{if} \;  \|\vec{x}\|_2 \leq B_{\operatorname{out}}, \\
      B_{\operatorname{out}} \frac{\vec{x}}{\|\vec{x}\|_2} & 
            \mathrm{if} \;  \|\vec{x}\|_2 > B_{\operatorname{out}},
    \end{cases}
\label{ABHRES:Bout}
\end{eqnarray}
with some fixed constant $B_{\operatorname{out}}$. Obviously, this ensures 
$\| \sigma(\vec{x}) \|_2 \leq B_{\operatorname{out}}$. 
Furthermore, note that $\sigma$ is norm-contractive and $1$-Lipschitz, i.e., 
\begin{equation}\label{ABHRES:sigma:bound}
\| \sigma(\vec{x}) \|_2 \leq \| \vec{x} \|_2
\qquad \mathrm{and} \qquad 
\| \sigma(\vec{x}_1) - \sigma(\vec{x}_2)\|_2 
\leq 
\|\vec{x}_1 - \vec{x}_2\|_2
\end{equation}
for any $\vec{x}$ and  $\vec{x}_1,  \vec{x}_2 \in \R^N$. The role of $\sigma$ is to push the output of the network inside the $\ell_2$-ball of radius $B_{\operatorname{out}}$, which in many applications is approximately known. The prior knowledge about the range of the output (boundedness) can improve the reconstruction performance and generalization \cite{Wu2020Sparse}. {The constant $B_{\operatorname{out}}$ may} be simply chosen to be equal to $B_{\operatorname{in}}$. 

% It is known in deep learning literature that normalization layers help to stabilize the training procedure.
% \textcolor{red}{
% \begin{itemize}
% \item Relation between $B_{\operatorname{in}}$ and $B_{\operatorname{out}}$?
% E.g. simply choose  $B_{\operatorname{in}} = B_{\operatorname{out}}$?
% \item examples for normalization layers? references?
% \item motivation through image processing etc ?
% \end{itemize}
% }
To formulate this as a statistical learning problem, we will formally introduce a hypothesis class and a loss function. The hypothesis set consists of all functions that can be expressed as $L$-step soft thresholding, where the dictionary matrix $\mat{\Phi}$ parameterizes the hypothesis class, and with an additional $\sigma$ after the final layer added. That is,
\begin{equation}
\mathcal{H}_1^L 
= 
\{\sigma \circ h:\R^n\to \R^N: h(\vec{y}) 
= \mat{\Phi} f_\mat{\Phi}^L(\vec{y}), \mat{\Phi} \in O(N)\}.
\label{ES:eq:hypothesis_class:H1}
\end{equation}
The assumption that $\mat{\Phi}$ ranges over the orthogonal group $O(N)$ and is shared across the layers leads to a recurrent neural network with a moderate number of weights. Using weight sharing enables a straightforward interpretation of learning a dictionary for reconstruction.
% However, for technical reasons that will become apparent later, we will introduce a separate dictionary for the linear transformation after the very final layer and consider the enlarged hypothesis class
% \begin{equation}
% \mathcal{H}_2^L 
% = 
% \{\sigma \circ h:\R^n\to \R^N: h(\vec{y}) 
% = \mat{\Psi} f_\mat{\Phi}^L(\vec{y}), 
% \mat{\Psi}, \mat{\Phi} \in O(N)\}.
% \label{ES:eq:hypothesis_class:H2}    
% \end{equation}
% Obviously, $\mathcal{H}_1^L$ is embedded in $\mathcal{H}_2^L$, that is we have the inclusion
% \begin{equation}
% \mathcal{H}_1^L 
% \subset 
% \mathcal{H}_2^L.
% \end{equation}
Much more general scenarios are discussed later, including models without weight-sharing (or different degrees thereof), and models where also the threshold $\lambda$ and the stepsize $\tau$ may be trainable, and even be altered from layer to layer.
 
Based on the training samples $\mathcal{S}$ and given the hypothesis space $\mathcal{H}_1^L$,
a learning algorithm yields a function $h_\mathcal{S} \in \mathcal{H}_1^L $ that aims at reconstructing 
$\vec{x}$ 
from the measurements $\vec{y}=\mat{A} \vec{x}$. The empirical loss of a hypothesis $h$ is the reconstruction error on the training sequence,
i.e., the difference between $\vec{x}_i$ and $\hat{\vec{x}}_i =  h(\vec{y}_i)$, that is
\[
\hat{\mathcal{L}}{(h)} = \frac{1}{m}\sum_{j=1}^m \ell(h,\vec{x}_j,\vec{y}_j).
\]
Different choices for the loss function $\ell$ to measure the reconstruction error are possible. A typical choice is the 
\textit{mean squared error} (MSE)
\begin{equation}
\ell_{\operatorname{MSE}}(h,\vec{x},\vec{y}) 
= 
\|h(\vec{y}) - \vec{x}\|_2^2.
\label{ABHRESeq:mse_loss}
\end{equation}
% Another choice would be to measure the error in the measurement domain via
% \begin{equation}
% \ell_{\mat{A}}(h,\vec{x},\vec{y}) 
% = 
% \| \mat{A}h(\vec{y}_j) - \mat{A} \vec{x}_j \|_2
% \quad \mathrm{or} \quad
% \ell_{\mat{A}}(h,\vec{x},\vec{y}) 
% = 
% \| \mat{A}h(\vec{y}_j) - \mat{A} \vec{x}_j \|_2^2.
% \label{ABHRESeq:A_loss}
% \end{equation}
% From a technical point of view this is advantageous, since it ensures that even the final application of the dictionary is followed by taking the measurements, so that measurement and dictionary always appear together. However, it may be difficult to justify since we are interested in the actual reconstruction error, not the error with respect to the measurements.
% \begin{equation}
% \ell(h,\vec{x},\vec{y}) 
% = 
% \|h(\vec{y}) - \vec{x} \|_2,
% \end{equation}
%%%%%%%%%%%%%%%%%%%%%%%%%%%%%%%%%%%%%%%%%%%%%%%%%%%%%%%%%%%%%%%%%%%%%%%%%%%%%%%%%%%%%%%
%\textcolor{red}{Remove?} Therefore, our focus will be on the loss function 
%\begin{equation}
%\ell(h,\vec{x},\vec{y}) 
%= 
%\|\sigma( h(\vec{y}) - \vec{x}) \|_2,
%\end{equation} 
%where the function $\sigma$ is a 1-Lipschitz thresholding function such that $\|\sigma(\vec{z})\|_2 $ %is bounded. \footnote{One may think of a function that shrinks vectors if their norm is beyond a %certain thresholding, but the exact nature of this function is not important for the rest. It is %introduced for technical reasons that will be apparent during our proofs.} 
%%%%%%%%%%%%%%%%%%%%%%%%%%%%%%%%%%%%%%%%%%%%%%%%%%%%%%%%%%%%%%%%%%%%%%%%%%%%%%%%%%%%%%%
The true loss, i.e., the risk of a hypothesis $h$, is accordingly defined as follows by
\[
\mathcal{L}(h) = \mathbb{E}_{\vec{x},\vec{y}\sim\mathcal{D}}\left(\ell(h,\vec{x},\vec{y})\right).
\]
The generalization error of the hypothesis $h_{\mathcal{S}}$ based on the training samples $\mathcal{S}$ is given as the difference between the empirical loss 
and the true loss,
\[
\operatorname{GE}(h_\mathcal{S}) = \left|\hat{\mathcal{L}}(h_\mathcal{S}) - \mathcal{L}(h_\mathcal{S})\right|.
\]
Note that some references denote the true loss $\mathcal{L}(h_\mathcal{S})$ as the generalization error. 
However, the above definition is more convenient for our purposes.

This motivating example explains how iterative reconstruction algorithms such as ISTA can be unfolded as a neural network, which is then trained on some training data. 
By this transformation into a machine learning problem, this raises the question of generalization, i.e., how well the trained decoder works on unseen data (from the same distribution of interest).
We will return to the problem of bounding the generalization error in the third section, after giving a more detailed overview on LISTA and its variants in the remainder of this section, and discussing different approaches to and challenges of generalization in deep learning in the next section.

%%%%%%%%%%%%%%%%%%%%%%%%%%%%%%%%%%%%%%%%%%%%%%%%%%%
\subsection{Variants of LISTA}
\label{ABHRESsubsec:2}

Before moving to the generalization analysis, we review some of the variants of LISTA algorithms. The original paper \citeES{gregor2010learning} focuses on sparse coding applications where a sparse representation of data needs to be learned. Since $\ell_1$-based methods are slow and do not scale to larger datasets, the authors propose a time unfolded version of ISTA with fixed number of iterations. The first layer of the network is simply given by
\[
\vec{x}^{0} = S_{\lambda}
    \left[\mat{W}\vec{y} \right].
%     \nonumber \\
% &=&  S_{\lambda^{(k)}} \left[\left(\mat{I} - \tau^{(k)} \mat{W}^\top \mat{A}\right )\vec{x}^{k} + \tau^{(k)}  \mat{W}^\top  \mat{A} \vec{y} \right],
\]
The next layers are formulated as
\[
\vec{x}^{t} = S_{\lambda}
    \left[\mat{W}\vec{y} + \mat{S}\vec{x}^{t-1} \right].
%     \nonumber \\
% &=&  S_{\lambda^{(k)}} \left[\left(\mat{I} - \tau^{(k)} \mat{W}^\top \mat{A}\right )\vec{x}^{k} + \tau^{(k)}  \mat{W}^\top  \mat{A} \vec{y} \right],
\]
LISTA learns $(\lambda,\mat{W},\mat{S})$ by back-propagation. {These parameters can be  common or different across layers. }

{Following LISTA, many works explored the similar idea of unfolding iterative thresholding algorithms \citeES{Wu2020Sparse,yang_deep_2016,liu2018alista,zhang2018ista,sprechmann2015learning,xin2016maximal}. } For example, the authors of \citeES{Wu2020Sparse} address two problems of ISTA type methods. First, the convergence of LISTA requires higher thresholds in the shrinkage operator to produce sparse vectors.  This comes at the cost of shrinkage of the output with respect to the original vector. The authors in 
\citeES{Wu2020Sparse} introduce a gain gate to increase output values and compensate this effect. They additionally introduce an overshoot gate that tries to improve the convergence by learning to boost the estimated vector close to the ground truth.

% \begin{eqnarray}
% \vec{x}^{k+1} 
% &=& S_{\lambda^{(k)}}
%     \left[\vec{x}^{k} + \tau^{(k)} \mat{W}^\top(\vec{y} - \mat{A}\vec{x}^{k} )\right] \nonumber \\
% &=&  S_{\lambda^{(k)}} \left[\left(\mat{I} - \tau^{(k)} \mat{W}^\top \mat{A}\right )\vec{x}^{k} + \tau^{(k)}  \mat{W}^\top  \mat{A} \vec{y} \right],
% % \label{ABHRESeq:03}
% \end{eqnarray}
% \begin{itemize}
% \item ALISTA
% \item (Neurally Augmented) ALISTA
% \item Gated LISTA
% \item (other algorithms)
% \end{itemize}

As mentioned before, it is possible to change further the structure of LISTA and {possibly} improve the performance. For example, in {analytic LISTA (ALISTA)} \citeES{liu2018alista},  only thresholds and step-size parameters are learned. {The update rule 
\eqref{ABHRES:def:layer} is modified into
$f_l(\vec{z}) = S_{\lambda^{(l)}}
     \left[\vec{z} + \tau^{(l)} \mat{W}^\top(\vec{y} - \mat{A}\vec{x}^{l} )\right]$, where the matrix $\mat{W}$ is chosen without using data, namely as a minimizer of the coherence with respect to $\mat{A}$.}
%The weight matrices are obtained without using data based on an optimizaiton problem that minimizes the generalized mutual coherence of the weight matrices as follows
% The notion of mutual coherence, see \citeES{rauhut2008compressed}, is used in compressed sensing to provide recovery guarantees for recsontruction of sparse vectors. 
% Unlike restricted isometry property (RIP), the mutual coherence can be computed for a given matrix. Matrices with smaller mutual coherence are more suitable for compressed sensing applications. 
% In ALISA, the following optimization problem is solved to minimize the generalized coherence:
%\[
%\mu(\mat{W}, \mat{A}) 
%= 
%\inf_{\mat{W} \in \R^{n \times N}} \max_{i \neq j} \mat{W}_{i}^\top \mat{A}_{j}
%\quad \mathrm{such \; that} \quad 
%\mat{W}_{i}^\top \mat{A}_{i} = 1 \quad \forall i \in [N],
%\]
%where for a matrix $\mat{M}$ we denote its $i$th column by $\mat{M}_i$. After solving this and obtaining $\mat{W}$, only thresholds $\lambda^{(k)}$ and stepsizes $\tau^{(k)}$ are learned. 
Instead of learning the same step sizes and thresholds for all the samples as in ALISTA, these parameters are updated based on the output of the previous layer in neurally augmented ALISTA \citeES{behrens_neurally_2021}.

\section{Generalization of Deep Neural Networks}
\label{ABHRESsec:3}

The generalization error of machine learning algorithms {is the gap between their performance averaged over the samples of training data and the expected performance computed using the actual distribution.} In this chapter, we define the generalization error as {the absolute difference between these two losses.} 

A machine learning algorithm $\mathcal{A}$ returns a function $h:\mathcal{X}\to\mathcal{Y}$ from a set of choice functions called hypothesis class $\mathcal{H}$ based on the training data defined as $m$ i.i.d. samples  $\vec{z}_i = (\vec{x}_i,{y}_i)$ from a  distribution $\mathcal{D}$ on $\mathcal{X} \times  \mathcal{Y}$.
%, where $\vec{x}_i\in \mathcal{X}$ and $y_i \in \mathcal{Y}$. 
If the hypothesis class is \textit{large}, it may contain complex enough functions that match the training data perfectly with zero training error. These functions, however, do not necessarily generalize well to {new, yet unseen data (or the test data in experiments)}. Statistical learning theory aims at bounding the generalization error in terms of the complexity of hypothesis class and training set size. There are different notions of complexity available in the literature such as Vapnik-Chernovekis (VC) dimension  \citeES{vapnik_nature_2000, vapnik_uniform_2015}, Rademacher complexity \citeES{koltchinskii_rademacher_2001, bartlett_rademacher_2002}, stability \citeES{shalev-shwartz_learnability_2010,rakhlin_stability_2005} and robustness \citeES{xu_robustness_2012}.
{In this chapter, we focus on 
the Rademacher complexity framework for bounding the generalization error.}

% The general guideline has been that higher complexity classes lead to worse generalization error, and regularization helps the generalization by controlling the complexity.  These insights do not transfer immediately to modern deep learning practices. Before discussing these issues further, we start by sketching classical approach to generalization 
% using Rademacher complexity framework. 

%%%%%%%%%%%%%%%%%%%%%%%%%%%%%%%%%%%%%%%%%%%%%%%%%%%%%%%%%%%%%%%%%%%%
\subsection{Rademacher Complexity Analysis}
\label{ESsubsec:2:1} 
%%%%%%%%%%%%%%%%%%%%%%%%%%%%%%%%%%%%%%%%%%%%%%%%%%%%%%%%%%%%%%%%%%%%

%%%%%%%%%%%%%%%%%%%%%%%%%%%%%%%%%%%%%%%%%%%%%%%%%%%%%%%%%%%%%%%%%%%%

In order to bound the generalization error, we use the Rademacher complexity. Consider a class $\mathcal{G}$ of real-valued functions $g$.
The empirical Rademacher complexity is defined as 
\begin{equation}
\mathcal{R}_\mathcal{S}(\mathcal{G}) 
= 
\mathbb{E}_{\epsilon} \sup_{h \in \mathcal{G}} \frac{1}{m} \sum_{i=1}^m \epsilon_i g(\vec{x}_i),
\end{equation}
where $\epsilon$ is a Rademacher vector, i.e., a vector of independent Rademacher variables $\epsilon_i$, $i=1,\hdots,m$, taking the values $\pm 1$ with equal probability.
The Rademacher complexity is then given as $\mathcal{R}_m(\mathcal{G})= \mathbb{E}_{\mathcal{S} \sim \mathcal{D}^m} \mathcal{R}_\mathcal{S}(\mathcal{G})$. We will exclusively work with the empirical Rademacher complexity. The Rademacher complexity 
provides a complexity measure that can bound the generalization error. Suppose that the training samples are given by 
$\mathcal{S} = (z_1,\hdots,z_m)$, 
{where $z_i \in \mathcal{Z}=\mathcal{X}\times\mathcal{Y}$}. 
The hypothesis class $\mathcal{H}$ consists of function $h:\mathcal{X}\to\mathcal{Y}$.
{Consider a loss function $\ell:\mathcal{H}\times Z\to \R$.}
The empirical loss of a function $h$ is defined by
\[
\hat{\mathcal{L}}(h) = \frac{1}{m}\sum_{j=1}^m \ell(h,z_j).
\]
This is the performance of $h$ on the training data. We can write the true loss of $h$ as
\[
\mathcal{L}(h) = \mathbb{E}_{z\sim\mathcal{D}}\left(\ell(h,z)\right).
\]
 Given a loss function $\ell$ and a hypothesis class $\mathcal{H}$, we are interested in the Rademacher complexity of the class 
$\mathcal{G} = \ell \circ \mathcal{H} = \{ g(z) = \ell\circ h(z): h \in \mathcal{H}\}$. We rely on the following theorem which bounds the generalization error in terms of the empirical Rademacher complexity.

\begin{theorem}[{\citeES[Theorem 26.5]{shalev-shwartz_understanding_2014}}]
\label{ES:thm:ge_vs_rademacher}
{Let $\mathcal{H}$ be a family of functions,  $\mathcal{S}$ the training set drawn from $\mathcal{D}^m $, and $\ell$ a real-valued bounded loss function satisfying $|\ell(h,z)| \leq c$ for all $h\in \mathcal{H}, z\in Z$. Then, for $\delta \in (0,1)$, with probability at least $1-\delta$  
we have, for all $h \in \mathcal{S}$,}
\begin{equation}
\mathcal{L}(h) 
\leq 
\hat{\mathcal{L}}(h) + 2\mathcal{R}_\mathcal{S}(\ell \circ\mathcal{H}) 
+ 4c \sqrt{\frac{2\log(4/\delta)}{m}}. 
\label{eq:ge_vs_rademacher:1}
\end{equation}
\end{theorem}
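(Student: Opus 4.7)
The plan is to prove the bound by the classical symmetrization plus concentration argument, applied to the random variable $\Phi(\mathcal{S}) := \sup_{h \in \mathcal{H}}\bigl(\mathcal{L}(h) - \hat{\mathcal{L}}(h)\bigr)$. The three ingredients are (i) a symmetrization step that bounds $\mathbb{E}_\mathcal{S}[\Phi(\mathcal{S})]$ by twice the (non-empirical) Rademacher complexity $\mathcal{R}_m(\ell \circ \mathcal{H})$, (ii) a concentration step that controls $\Phi(\mathcal{S})$ in terms of its expectation, and (iii) a concentration step that replaces $\mathcal{R}_m(\ell \circ \mathcal{H})$ by the empirical $\mathcal{R}_\mathcal{S}(\ell \circ \mathcal{H})$. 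Combining these with a union bound at level $\delta/2$ for each of the two high-probability steps yields the stated inequality.

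For the symmetrization step, I would introduce an independent ghost sample $\mathcal{S}' = (z_1',\dots,z_m')$ drawn from $\mathcal{D}^m$, write $\mathcal{L}(h) = \mathbb{E}_{\mathcal{S}'}[\hat{\mathcal{L}}_{\mathcal{S}'}(h)]$, and push the expectation outside the supremum by Jensen's inequality:
\begin{equation*}
\mathbb{E}_\mathcal{S}[\Phi(\mathcal{S})] \;\leq\; \mathbb{E}_{\mathcal{S},\mathcal{S}'}\sup_{h \in \mathcal{H}} \frac{1}{m} \sum_{i=1}^m \bigl(\ell(h,z_i') - \ell(h,z_i)\bigr).
\end{equation*}
Since $z_i$ and $z_i'$ are i.i.d., the joint distribution is invariant under swapping each pair, so introducing independent Rademacher signs $\epsilon_i$ and taking expectations gives
\begin{equation*}
\mathbb{E}_\mathcal{S}[\Phi(\mathcal{S})] \;\leq\; 2\,\mathbb{E}_{\mathcal{S},\epsilon}\sup_{h \in \mathcal{H}} \frac{1}{m} \sum_{i=1}^m \epsilon_i \ell(h,z_i) \;=\; 2\,\mathcal{R}_m(\ell \circ \mathcal{H}).
\end{equation*}

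For the two concentration steps I would invoke McDiarmid's bounded differences inequality. Because $|\ell(h,z)| \leq c$, replacing a single $z_j$ in $\mathcal{S}$ alters $\hat{\mathcal{L}}(h)$ by at most $2c/m$, hence alters both $\Phi(\mathcal{S})$ and $\mathcal{R}_\mathcal{S}(\ell \circ \mathcal{H})$ by at most $2c/m$. McDiarmid therefore yields, for each of the two quantities separately, deviation from its mean of at most $c\sqrt{2\log(2/\delta')/m}$ with probability at least $1-\delta'$. Taking $\delta' = \delta/2$ in each case and a union bound, with probability at least $1-\delta$ we simultaneously have $\Phi(\mathcal{S}) \leq \mathbb{E}[\Phi(\mathcal{S})] + c\sqrt{2\log(4/\delta)/m}$ and $\mathcal{R}_m(\ell \circ \mathcal{H}) \leq \mathcal{R}_\mathcal{S}(\ell \circ \mathcal{H}) + c\sqrt{2\log(4/\delta)/m}$. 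Chaining these with the symmetrization bound and using $\mathcal{L}(h) - \hat{\mathcal{L}}(h) \leq \Phi(\mathcal{S})$ produces the claimed estimate, after absorbing the constants into the displayed factor $4c\sqrt{2\log(4/\delta)/m}$.

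No step here is technically hard, but the one requiring care is tracking constants through the two McDiarmid applications and the symmetrization (in particular, the factor $2$ in front of $\mathcal{R}_\mathcal{S}$ comes from symmetrization, while the factor $4c$ comes from summing the two $2c$-contributions of the concentration inequalities); getting the final $\log(4/\delta)$ factor right is a matter of allocating the failure probability evenly across the two union-bound events. Since the theorem is quoted verbatim from \citeES[Theorem 26.5]{shalev-shwartz_understanding_2014}, one may alternatively just invoke it as a black box, but the sketch above is the standard derivation.
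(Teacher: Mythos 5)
The paper does not prove this statement at all: it is imported verbatim as Theorem 26.5 of Shalev-Shwartz and Ben-David and used as a black box, so there is no in-paper argument to compare against. Your sketch is the standard and correct derivation: symmetrization via a ghost sample and Jensen to get $\mathbb{E}_{\mathcal{S}}[\Phi(\mathcal{S})] \leq 2\mathcal{R}_m(\ell\circ\mathcal{H})$, then two applications of McDiarmid (each with bounded differences $2c/m$, since $|\ell|\leq c$) to pass from $\Phi(\mathcal{S})$ to its mean and from $\mathcal{R}_m$ to $\mathcal{R}_{\mathcal{S}}$, with the failure probability split as $\delta/2 + \delta/2$ to produce the $\log(4/\delta)$. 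The only inaccuracy is in your accounting of the final constant: the two deviations are each of size $c\sqrt{2\log(4/\delta)/m}$, but the second one is multiplied by the factor $2$ sitting in front of $\mathcal{R}_m$, so the chain actually yields $3c\sqrt{2\log(4/\delta)/m}$ rather than a sum of "two $2c$-contributions"; since $3c \leq 4c$, the stated bound follows a fortiori, so this is a cosmetic slip rather than a gap.
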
 

% We will apply this theorem in our case, meaning that we need to estimate the Rademacher complexity
% \begin{equation}
%         \mathcal{R}_m (\ell \circ \mathcal{H}^L) 
% =   \mathbb{E} \sup_{h \in \mathcal{H}^L} \frac{1}{m} 
%     \sum_{i = 1}^m \epsilon_i \|\sigma( h(\vec{y}_i) - \vec{x}_i) \|_2.
% \end{equation}
For real-valued functions $h$, i.e. when $\mathcal{Y}=\R$, the Rademacher complexity of $\ell\circ\mathcal{H}$ can be bounded using the so-called contraction lemma \cite[Lemma~26.9]{shalev-shwartz_understanding_2014}.

\begin{lemma}[Contraction lemma]
Let $\mathcal{S}$ be the training sequence and the functions $f_i$ be $K$-Lipschitz from $\R$ to  $\R$ for $i\in[m]$. Then, {for a class of real valued functions $\mathcal{H}$}, we have
\begin{equation}
       \mathbb{E}_{\epsilon} \sup_{h\in\mathcal{H}} \sum_{i=1}^m \epsilon_i f_i \circ h(\vec{x}_i)
\leq   K \mathbb{E}_{\epsilon} \sup_{h \in \mathcal{H}} \sum_{i=1}^m  \epsilon_{i} h(\vec{x}_i).
\end{equation}
\label{eq:contraction}
\end{lemma}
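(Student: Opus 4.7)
My plan is to prove this classical contraction inequality by the standard peeling argument, removing one Rademacher variable at a time. Since the expectation is only over $\epsilon$ (the samples $\vec{x}_i$ are fixed), each quantity $h(\vec{x}_i)$ can be treated as a real number depending only on the choice of $h \in \mathcal{H}$.

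First I would handle a single index, say $i=m$. Condition on $\epsilon_1,\ldots,\epsilon_{m-1}$ and set $V(h):=\sum_{i=1}^{m-1}\epsilon_i f_i(h(\vec{x}_i))$. Averaging over $\epsilon_m \in \{\pm 1\}$ explicitly gives
\[
\mathbb{E}_{\epsilon_m}\sup_{h\in\mathcal{H}}\bigl[V(h)+\epsilon_m f_m(h(\vec{x}_m))\bigr]=\frac{1}{2}\sup_{h,h'\in\mathcal{H}}\bigl[V(h)+V(h')+f_m(h(\vec{x}_m))-f_m(h'(\vec{x}_m))\bigr],
\]
obtained by merging the two half-space suprema into a single sup over a pair $(h,h')$. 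Applying the Lipschitz bound $f_m(a)-f_m(b)\leq K|a-b|$ yields the upper bound $\tfrac{1}{2}\sup_{h,h'}\bigl[V(h)+V(h')+K|h(\vec{x}_m)-h'(\vec{x}_m)|\bigr]$. The key symmetry observation is that swapping $h\leftrightarrow h'$ leaves $V(h)+V(h')$ unchanged, so the pair achieving the supremum may always be taken with $h(\vec{x}_m)\geq h'(\vec{x}_m)$; hence the absolute value can be dropped and the supremum factors into two independent pieces, recombining as $\mathbb{E}_{\epsilon_m}\sup_{h\in\mathcal{H}}\bigl[V(h)+\epsilon_m K\, h(\vec{x}_m)\bigr]$.

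The second step is to iterate the same one-coordinate argument for $i=m-1,m-2,\ldots,1$. At each stage the function $f_i$ sits in exactly the same position $f_m$ occupied in the first step, because the coordinates already processed carry only the linear terms $\epsilon_j K\, h(\vec{x}_j)$ and are unaffected by the next Lipschitz/symmetry manipulation. Thus an identical calculation replaces $f_i(h(\vec{x}_i))$ by $K\, h(\vec{x}_i)$ at each step, and after $m$ applications the right-hand side equals $K\,\mathbb{E}_\epsilon\sup_{h\in\mathcal{H}}\sum_{i=1}^m\epsilon_i h(\vec{x}_i)$, which is the claim.

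The main subtle point is the symmetry argument used to discard the absolute value: it relies crucially on the supremum being taken over two variables $h,h'$ both ranging over the same class $\mathcal{H}$ (so $h$ and $h'$ can be swapped without leaving the index set) and on the symmetry of $V(h)+V(h')$ in its two arguments. Everything else is routine bookkeeping around the one-sided Lipschitz estimate.
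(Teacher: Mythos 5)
Your proof is correct: it is the standard one-coordinate-at-a-time peeling argument, with the Lipschitz bound followed by the $h\leftrightarrow h'$ symmetry step to discard the absolute value, which is exactly the proof of Lemma 26.9 in Shalev-Shwartz and Ben-David that the paper cites. The paper itself gives no proof of this lemma (it is quoted as a known result), so your argument matches the intended source rather than deviating from it.
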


With the contraction lemma, we can remove the loss function and work only with the hypothesis class. 
\subsection{Generalization bounds for Deep Neural Networks}
\label{ESsubsec:5}

{
Many recent works aim at  explaining the excellent generalization properties of deep neural networks. In order to provide a brief review of this body of literature, we consider an $L$-layer neural network
\[
f_{\mat{W}_1, \hdots, \mat{W}_L} (\vec{y}) 
= 
\sigma(\mat{W}_L \cdot \sigma( \cdots \sigma(\mat{W}_1 \vec{y}) \cdots ) )
\]
with weight matrices $\mat{W}_j \in \mathbb{R}^{n_{j-1}\times n_j}$, $j = 1,\hdots,L$ ($n_0 = n$), and an elementwise activation function $\sigma : \mathbb{R} \to \mathbb{R}$}. {All the works to be mentioned below consider the matrices $\mat{W}_j$ as free parameters of the hypothesis class, hence, they aim at an overparameterized setting. Moreover, they consider classification problems. In contrast, our work considers regression problems and networks with shared weights, leading to a non-overparametrized setting.} 

{
The Rademacher complexity was used in \citeES{bartlett_spectrally-normalized_2017} to obtain norm-based generalization error {bounds for the probability of misclassification via the argmax of a neural network in a multi-class problem with $K$ classes.
%The generalization error is obtained in terms of the norm of different layers, empirically shown to correlate with the generalization error. 
%The generalization error is scaling as
The margin-type bound in \citeES{bartlett_spectrally-normalized_2017} states that, with probability at least $1-\delta$ over the i.i.d.\ samples $(\vec{x}_i,\vec{y}_i) \in \mathbb{R}^N \times [K]$,}
\begin{align*}
&\mathbb{P}(\operatorname{argmax} f_{\mat{W}_1, \hdots, \mat{W}_L}(\vec{x}) \neq \vec{y}) \leq \widehat{\mathcal{R}}_\gamma(f_{\mat{W}_1,\hdots, \mat{W}_L}) \\
& + C \frac{\|\mat{X}\|_F \log(\max_j n_j)}{\gamma m}
\left(
    \sum_{i=\ell}^L \left(
    \frac{\|\mat{W}_\ell^T\|_{2,1}}{\|\mat{W}_\ell\|_{2 \to 2}}\right)^{2/3}
    \right)^{3/2}
    \prod_{\ell=1}^L (\rho \|\mat{W}_\ell\|_{2 \to 2})
 +  \sqrt{\frac{C \log(1/\delta)}{m}},
\end{align*}
where $\gamma > 0$ is a suitable parameter, $\rho>0$ the Lipschitz constant of $\sigma$ and $\|\mat{W}_\ell^T\|_{2,1}$ is the sum of $\ell_2$-norm of rows of $\mat{W}_\ell^T$. Furthermore, the empirical margin-type risk is defined as
\[
\widehat{\mathcal{R}}_\gamma(f) = \frac{1}{m} \sum_{i=1}
^m \mathbf{1}[f(\vec{x}_i)_{\vec{y}_i} \leq \gamma + \max_{j \neq \vec{y}_i} f(\vec{x}_i)_j]. 
\]
Noting that in general $\|\mat{W}_\ell^T\|_{2,1} = \|\mat{W}_\ell\|_{2 \to 1} \leq \sqrt{n_\ell} \|\mat{W}_\ell\|_{2 \to 2}$, and assuming that all data points $\vec{x}_i$ are bounded in $\ell_2$, i.e., $\|\mat{X}\|_F \leq \sqrt{m} B$, the first term in the second line of the bound can be estimated by 
\[
L^{3/2} \max_j \sqrt{n_j} \log(n_j) \left(\rho \max_\ell \|\mat{W}_\ell\|_{2 \to 2}\right)^L \frac{B}{\gamma \sqrt{m}}.
\]
%We use $ \tilde{O}(\cdot)$ notation to hide logarithmic terms and some constants.
%Comparing $\|W_i^T\|_{2,1}$ with the spectral norm of $W_i$ shows that the above bound scales with the number of layers as $L^{3/2}$. 
%\textcolor{blue}{
A similar, but slightly worse, norm based bound was obtained  \citeES{neyshabur_pac-bayesian_2018} using a PAC Bayesian approach, which leads to a completely different analysis.} 

%It states that, with probability at least $1-\delta$ over the i.i.d.\ training samples $(\vec{x}_i,y_i)$,
%This bound is scaling as
%\begin{equation}
%&\mathbb{P}(\operatorname{argmax} f_{W_1,\hdots,W_L}(\vec{x}) \neq y) \leq \widehat{\mathcal{R}}_\gamma(f_{W_1,\hdots,W_L}) + C \left(\frac{\prod_{\ell=1}^L\|W_\ell\|_{2 \to 2}\left(
%    L^2 h
%    \sum_{i=1}^L \left(
 %   \frac{\|W_i\|^2_{F}}{\|W_i\|^2_2}\right)
%    \right)^{1/2}}{\sqrt{m}}\right),
%\end{equation}
%where $h$ is the maximum width of the layers. From the fact that the Frobenius norm exceeds the spectral norm, we can see that this bound also suffers from the same dimension dependency as above and scales as  $L^{3/2}$ with the number of layers. These results, therefore, show polynomial dimension dependency. 

A bound with {potentially} better dimension dependence was obtained in \citeES{golowich_size-independent_2018}. As an example, the result of \citeES{bartlett_spectrally-normalized_2017} can be improved {to a generalization error bound scaling in the following way}, for any $p\geq 1$,
\begin{equation}
    \tilde{O}\left(\prod_{\ell=1}^L M(\ell)
    \min\left\{
    \frac{\log\left(\frac{1}{\Gamma}\prod_{\ell=1}^L M_p(\ell)
    \right)^{\frac{1}{p+\frac{2}{3}}}}{{m}^{\frac{1}{2+3p}}},
    \sqrt{\frac{L^3}{m}}
    \right\}
    \right),
\end{equation}
where $M(\ell)$ and $M_p(\ell)$ are respectively upper bounds on $\|\mat{W}_\ell\|_{2\to 2}$ and $p$-Schatten norm of $\mat{W}_\ell$, and $\Gamma$ is a lower bound on $\prod_{\ell=1}^L \|\mat{W}_\ell\|_{2 \to 2}$. This result can remove the dependence on $L$ if the norms are well behaved. Note that this comes at the price of a worse sample efficiency. For instance, for $p=1$, the dimension free bound scales as $m^{1/5}$ in contrast with $m^{1/2}$ {(however, note that a minimum with $\sqrt{L^3/m}$ is taken)}.

{Instead of continuing the discussion on existing generalization bounds for deep networks, we invite the interested reader to refer to \citeES{jiang2019fantastic} for a detailed experimental account and \citeES{nagarajan2019uniform} for some shortcomings of existing bounds, for example, they tend to grow with training data size.}

{In the remainder of this chapter, we will show a generalization error bound for regression (reconstruction) with the introduced thresholding networks which shows linear dimension dependence and linear dependence on the number of layers, see Theorem~\ref{ABHRES:thm:main_result}. Our proof uses
different techniques than in
the works mentioned above.
In fact, it is not straightforward to apply those techniques due to the weight sharing between different layers in our case.}

\section{Generalization of Deep Thresholding Networks}
\label{ABHRESsec:4}
%%%%%%%%%%%%%%%%%%%%%%%%%%%%%%%%%%%%%%%%%%%%%%%%%%%%%%%%%%%%%%%%%%%%%%%%%%%%%%%%%%%%%%%%%%%%

In this section, we return to the original problem layed out already in the beginning of this chapter.
{We will prove the following result on the generalization error of the class of neural networks $\mathcal{H}_1$ introduced in Section~\ref{ABHRESsubsec:1} with a learned orthogonal dictionary. 
We state our theorem here under the simplifying but reasonable assumption that $\tau \|\mat{A}\|_2^2 \leq 1$, see \eqref{ABHRES:tau:cond}. A more general version of the result will be presented in Section~\ref{ABHRES:subsec:mainresult}. We continue to use the notation introduced in Section~\ref{ABHRESsubsec:1}.} 

{
\begin{theorem}\label{ABHRES:thm:main_result}
Consider the hypothesis space $\mathcal{H}_1^L$ defined in \eqref{ES:eq:hypothesis_class:H1} and assume the samples $\vec{x}_i$, $i= 1,\hdots, m$, to be drawn i.i.d.\ at random according to some (unknown) distribution such that $\|\vec{x}_i\|_2 \leq B_{\operatorname{in}}$ almost surely with $B_{\operatorname{in}} = B_{\operatorname{out}}$ in \eqref{ABHRES:Bout}. 
Let  $\vec{y}_i = \mat{A}\vec{x}_i$ and assume that
$\tau \|\mat{A}\|_{2 \to 2}^2 \leq 1$.
%\[
%\tau \|\mat{A}\|_{2 \to 2}^2 \leq 1.
%\]
Then with probability at least $1-\delta$, for all $h \in \mathcal{H}_1^L$, the generalization error is bounded as
%\begin{eqnarray}
%\mathcal{L}(h) 
% \leq 
%\hat{\mathcal{L}}(h) 
%& + & 8 \sqrt{\log(2)} B_{\operatorname{out}} \sqrt{\frac{Nn(L+5)}{m}}
%+ 8 B_{\operatorname{out}} \frac{N\sqrt{\log(e + 4eL)}}{\sqrt{m}}
% \notag\\
%& & + B_{\operatorname{out}} \sqrt{\frac{128\log(4/\delta)}{m}}. 
%\end{eqnarray}
%where $K_L$ is the perturbation bound in \eqref{ES:eq:perturbation_bound}. 
\begin{eqnarray}
\mathcal{L}(h) 
 \leq 
\hat{\mathcal{L}}(h) 
& + & 8 B_{\operatorname{out}} \sqrt{\frac{Nn \log(2 + 8L(L+3))}{m}}
+ 8 B_{\operatorname{out}} \frac{N\sqrt{\log(e + 8eL)}}{\sqrt{m}}
 \notag\\
& & + B_{\operatorname{out}} \sqrt{\frac{128\log(4/\delta)}{m}}. 
\label{ABHRES:eq:main:bound}
\end{eqnarray}
\end{theorem}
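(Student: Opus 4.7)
The overall strategy is to combine Theorem~\ref{ES:thm:ge_vs_rademacher} with a Rademacher complexity estimate obtained via Dudley's entropy integral. Uniform boundedness of the MSE loss is immediate: by construction of $\sigma$ one has $\|h(\vec{y})\|_2 \leq B_{\operatorname{out}}$, and by assumption $\|\vec{x}\|_2 \leq B_{\operatorname{in}} = B_{\operatorname{out}}$, so $\ell_{\operatorname{MSE}}(h,\vec{x},\vec{y}) \leq 4 B_{\operatorname{out}}^2$. Substituting this into \eqref{eq:ge_vs_rademacher:1} yields the confidence term $B_{\operatorname{out}}\sqrt{128\log(4/\delta)/m}$ appearing in \eqref{ABHRES:eq:main:bound}, so the whole task reduces to estimating $\mathcal{R}_\mathcal{S}(\ell_{\operatorname{MSE}} \circ \mathcal{H}_1^L)$. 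Since the hypotheses are $\R^N$-valued, the scalar contraction Lemma~\ref{eq:contraction} does not apply directly; I would use a vector-valued contraction principle (as alluded to in the introduction) together with the fact that the MSE loss is $4 B_{\operatorname{out}}$-Lipschitz on the $B_{\operatorname{out}}$-ball to reduce the problem to bounding $\mathcal{R}_\mathcal{S}(\mathcal{H}_1^L)$ up to a factor of $B_{\operatorname{out}}$.

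The Rademacher complexity of $\mathcal{H}_1^L$ is then controlled by Dudley's integral
\[
\mathcal{R}_\mathcal{S}(\mathcal{H}_1^L) \;\lesssim\; \frac{1}{\sqrt{m}} \int_0^{D} \sqrt{\log \mathcal{N}(\mathcal{H}_1^L, d_\mathcal{S}, \epsilon)}\, d\epsilon,
\]
with $d_\mathcal{S}$ the empirical $L^2$-metric on outputs and $D \lesssim B_{\operatorname{out}}$. The key observation motivating the shape $\sqrt{Nn \log L^2/m} + N\sqrt{\log L/m}$ of the target bound is that $\mat{\Phi}$ plays two structurally different roles: inside every hidden iteration it appears only through $\mat{A}\mat{\Phi}$, which lies in the at most $Nn$-dimensional set $\{\mat{A}\mat{U} : \mat{U}\in O(N)\} \subset \R^{n\times N}$, whereas in the final decoding step $\mat{\Phi}$ enters as a full $N\times N$ orthogonal matrix (intrinsic dimension of order $N^2$). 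Covering these two components separately via standard volumetric bounds and then assembling them by a triangle inequality on $d_\mathcal{S}$ would produce an entropy bound of the form $\log \mathcal{N}(\mathcal{H}_1^L, d_\mathcal{S}, \epsilon) \lesssim Nn \log(C_1 L^2 B_{\operatorname{out}}/\epsilon) + N^2 \log(C_2 L)$; plugging this into Dudley's integral and using $\int_0^D \sqrt{\log(C/\epsilon)}\,d\epsilon \lesssim D\sqrt{\log(C/D)}$ produces exactly the two square-root terms of \eqref{ABHRES:eq:main:bound}.

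The principal obstacle is the Lipschitz inequality that transfers covering of $O(N)$ into covering of $\mathcal{H}_1^L$: because of weight sharing, a single perturbation $\mat{E} := \mat{\Phi}' - \mat{\Phi}$ propagates through every one of the $L$ layers, and a naive product-of-per-layer-Lipschitz-constants bound would blow up exponentially in $L$. To avoid this, I would exploit contractivity. Writing $\vec{z}_l := f_l(\vec{y})$ and $\vec{z}'_l$ for the perturbed iterates, the $1$-Lipschitz property of $S_{\tau\lambda}$ combined with the algebraic identity
\[
\vec{z}'_{l+1}-\vec{z}_{l+1} = (\mat{I} - \tau \mat{\Phi}^\top \mat{A}^\top \mat{A}\mat{\Phi})(\vec{z}'_l - \vec{z}_l) + \tau \bigl(\mat{\Phi}^\top \mat{A}^\top \mat{A}\mat{\Phi} - \mat{\Phi}'^\top \mat{A}^\top \mat{A}\mat{\Phi}'\bigr)\vec{z}'_l + \tau \mat{E}^\top \mat{A}^\top \vec{y}
\]
turns into a recursion whose homogeneous part is non-expansive: the hypothesis $\tau\|\mat{A}\|_{2\to 2}^2 \leq 1$ together with orthogonality of $\mat{\Phi}$ gives $\|\mat{I} - \tau \mat{\Phi}^\top \mat{A}^\top \mat{A}\mat{\Phi}\|_{2\to 2}\leq 1$. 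Using $\|\mat{\Phi}^\top \mat{A}^\top \mat{A}\mat{\Phi} - \mat{\Phi}'^\top \mat{A}^\top \mat{A}\mat{\Phi}'\|_{2\to 2}\leq 2\|\mat{A}\|_{2\to 2}^2 \|\mat{E}\|_{2\to 2}$ together with the a priori bound $\|\vec{z}'_l\|_2 \leq l B_{\operatorname{out}}$ (obtained by iterating the defining recursion with $\vec{z}_0 = \vec{0}$), the per-layer error is at most of order $l\,\|\mat{E}\|_{2\to 2}\,B_{\operatorname{out}}$, so summing over $l$ yields a Lipschitz constant of order $L^2 B_{\operatorname{out}}$ — polynomial rather than exponential in $L$. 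This is precisely the growth rate that, once placed inside the $\log(1/\epsilon)$ factor of the covering bound, yields the $\log(L^2)$ dependence in \eqref{ABHRES:eq:main:bound}; the Lipschitz contribution of the final $\sigma\circ \mat{\Phi}\,\cdot$ step (linear in $\|\vec{z}_L\| \lesssim L B_{\operatorname{out}}$, using $1$-Lipschitzness of $\sigma$ and orthogonality of $\mat{\Phi}$) accounts for the complementary $\log L$ term. Plugging the Lipschitz estimate into the two-component covering bound, evaluating Dudley's integral, reintroducing the $B_{\operatorname{out}}$ factor from the vector-valued contraction step, and applying Theorem~\ref{ES:thm:ge_vs_rademacher} completes the proof.
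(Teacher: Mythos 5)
Your overall architecture coincides with the paper's: Theorem~\ref{ES:thm:ge_vs_rademacher}, a vector-valued contraction principle (Maurer) to strip the loss, Dudley's integral over the set of output matrices, a two-component covering that separates the final dictionary (the full $N^2$-dimensional $O(N)$) from the hidden-layer dependence through $\mat{A}\mat{\Phi}$ (covered inside the $nN$-dimensional set $\{\mat{A}\mat{U}:\mat{U}\in O(N)\}$), and a perturbation recursion whose homogeneous part is non-expansive under $\tau\|\mat{A}\|_{2\to 2}^2\leq 1$, giving a Lipschitz constant of order $L^2$ in the hidden part and of order $L$ in the final linear step. The paper formalizes your ``two roles of $\mat{\Phi}$'' by enlarging the class to $\mathcal{H}_2^L$ with an independent $\mat{\Psi}$ after the last layer; your decoupling is the same idea.

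There are, however, two concrete problems. First, you work with the squared loss $\ell_{\operatorname{MSE}}$, but the stated bound is only obtained (and is only dimensionally consistent) with the unsquared loss $\ell(h,\vec{x},\vec{y})=\|h(\vec{y})-\vec{x}\|_2$, which is what the paper's proof actually uses. With your $c=4B_{\operatorname{out}}^2$, the confidence term from \eqref{eq:ge_vs_rademacher:1} is $16B_{\operatorname{out}}^2\sqrt{2\log(4/\delta)/m}$, not $B_{\operatorname{out}}\sqrt{128\log(4/\delta)/m}$; and your $4B_{\operatorname{out}}$-Lipschitz contraction step would multiply both Dudley terms by $4B_{\operatorname{out}}$, so every term would scale like $B_{\operatorname{out}}^2$ rather than the stated $B_{\operatorname{out}}$. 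With the unsquared loss one has $c=B_{\operatorname{in}}+B_{\operatorname{out}}=2B_{\operatorname{out}}$, which yields exactly $B_{\operatorname{out}}\sqrt{128\log(4/\delta)/m}$, and the norm is $1$-Lipschitz, so no extra factor enters the contraction. Second, to actually realize the $nN\log(\cdot)$ entropy term you must express the hidden-layer Lipschitz bound in the metric $\|\mat{A}\mat{\Phi}_1-\mat{A}\mat{\Phi}_2\|_{2\to 2}$, e.g.\ via $\|(\mat{A}\mat{\Phi}_1)^\top\mat{A}\mat{\Phi}_1-(\mat{A}\mat{\Phi}_2)^\top\mat{A}\mat{\Phi}_2\|_{2\to 2}\leq 2\|\mat{A}\|_{2\to 2}\|\mat{A}\mat{\Phi}_1-\mat{A}\mat{\Phi}_2\|_{2\to 2}$; your displayed inequality is stated in terms of $\|\mat{E}\|_{2\to 2}=\|\mat{\Phi}'-\mat{\Phi}\|_{2\to 2}$, which is the metric of $O(N)$ itself and would force an $N^2$-dimensional cover for the hidden layers as well, destroying the $nN$ term. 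Both issues are repairable without changing your strategy, but as written the proposal does not yield the constants or the $B_{\operatorname{out}}$-scaling of the theorem.
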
}

Further details and a slightly simplified bound for $L \geq 2$ can be found in Corollary \ref{ES:corollary:main_result} and the discussion thereafter.

% \textcolor{green}{NEW: and assuming $L \geq 7$
% \begin{eqnarray}
% \mathcal{L}(h) 
%  \leq 
% \hat{\mathcal{L}}(h) 
% & + & 8 \sqrt{3} B_{\operatorname{out}} \sqrt{\frac{Nn \log(L)}{m}}
% + 8 B_{\operatorname{out}} \frac{N\sqrt{\log(e + 4eL)}}{\sqrt{m}}
%  \notag\\
% & & + B_{\operatorname{out}} \sqrt{\frac{128\log(4/\delta)}{m}}. 
% \label{ABHRES:eq:main:bound}
% \end{eqnarray}
% Note that the assumption $L \geq 7$ (which is highly realistic anyways, obviously) was only made for a simple presentation of the result and is by no means necessary; see Corollary \ref{ES:corollary:main_result} and the discussion thereafter. 
% }

{
Of course, the idea is to choose an $h$ that minimizes the empirical loss $\hat{\mathcal{L}}(h)$, i.e., the first term on the right hand side of \eqref{ABHRES:eq:main:bound}, but in principle any $h$ (computed by some algorithm) can be inserted into this bound. Since the samples are available, both $\hat{\mathcal{L}}(h)$ and the other terms can be computed (assuming $B_{\operatorname{in}}$ is known), so that the theorem allows to provide a concrete bound of the true risk $\mathcal{L}(h)$.
Roughly speaking, i.e., ignoring constants, the generalization error can be bounded as 
%\[
%|\mathcal{L}(h) - \hat{\mathcal{L}}(h)| \lesssim \sqrt{\frac{NnL +N^2 %\log(L)}{m}}.
%\]
\begin{equation}
    |\mathcal{L}(h) - \hat{\mathcal{L}}(h)| 
\lesssim 
\sqrt{\frac{Nn \log(L) +N^2 \log(L)}{m}}.
\label{eq:general_error_rough}
\end{equation}

In other words, once the number of training samples scales like 
%%% $ m \sim NnL + N^2 \log(L)$, 
$ m \sim (Nn + N^2) \log(L)$, 
the generalization error is guaranteed to be small with high probability.

Remarkably, the number $L$ of layers only enters logarithmically, while some of the previously available bounds for deep neural networks (in the context of classification, however) scale even only exponentially with $L$ (at least in many interesting settings). 
}

%\textcolor{green}{Remarkably, the number \dots }
%Observe that the number $L$ of layers only enters linearly
%\textcolor{green}{NEW: at most logarithmically}
%, while most of the previously available bounds for deep neural networks (in the %context of classification, however) scale exponentially with $L$ (at least in many %interesting settings).

{The remainder of this section is devoted to the proof of the above statement. We} will use the approach based on
the Rademacher complexity as described in Section~\ref{ESsubsec:2:1}, in particular 
Theorem~\ref{ES:thm:ge_vs_rademacher}. Hence, we need to estimate the Rademacher complexity
\begin{equation}
        \mathcal{R}_m (\ell \circ \mathcal{H}^L_1) 
=   \mathbb{E} \sup_{h \in \mathcal{H}^L_1} \frac{1}{m} 
    \sum_{i = 1}^m \epsilon_i \|\sigma( h(\vec{y}_i) - \vec{x}_i) \|_2.
\end{equation}
%where $\mathcal{H}^L_1$ denotes the hypothesis class of interest, i.e., 
%$\mathcal{H}^L = \mathcal{H}_1^L$ 
%or $\mathcal{H}^L = \mathcal{H}_2^L$
%in our case. 
{As explained in Section~\ref{ESsubsec:2:1}, the so-called contraction principle is often applied in such situations}. However, since we are dealing with a hypothesis class of vector-valued functions, it is not applicable in its standard form. The following result \citeES[Corollary 4]{maurer_vector-contraction_2016} is a generalization to this situation, and it is a crucial tool for our proof.

\begin{lemma}\label{ES:lem:Maurer}
{Suppose that $\mathcal{H}$ is a set of functions $h: \mathcal{X} \to \R^N$ and that $f: \R^N \to \R^N$ is $K$-Lipschitz.} Let $\mathcal{S}= (\vec{x}_i)_{i \in [m]}$ be the training sequence. Then
\begin{equation}
       \mathbb{E} \sup_{h\in\mathcal{H}} \sum_{i=1}^m \epsilon_i f \circ h(\vec{x}_i)
\leq   \sqrt{2} K \mathbb{E} \sup_{h \in \mathcal{H}} \sum_{i=1}^m \sum_{k=1}^N \epsilon_{ik} h_k(\vec{x}_i),
\end{equation}
{where $(\epsilon_i)$ and $(\epsilon_{ik})$ are both Rademacher sequences.}
\end{lemma}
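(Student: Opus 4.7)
The lemma is Maurer's vector-contraction inequality. Strictly as stated the left-hand side is vector-valued, but the classical version (and the form needed in the sequel, e.g.\ with $f(\vec z)=\|\sigma(\vec z-\vec x)\|_2$) has $f\colon\R^N\to\R$, and I will proceed under this interpretation. My plan follows the well-known strategy of peeling off the Rademacher variables one at a time, each time replacing a single scalar $\epsilon_i$ by an independent $N$-dimensional Rademacher vector $(\epsilon_{ik})_{k=1}^N$ weighting the coordinates of $h(\vec{x}_i)$, picking up an overall factor $\sqrt{2}\,K$ in the process.

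The core technical step is the following single-variable lemma: for any $\psi\colon\mathcal{H}\to\R$ not depending on $\epsilon$ and any fixed $\vec{x}$,
\begin{equation*}
\mathbb{E}_{\epsilon}\sup_{h\in\mathcal{H}}\!\bigl[\psi(h)+\epsilon\, f(h(\vec{x}))\bigr]
\;\le\;
\mathbb{E}_{\epsilon'}\sup_{h\in\mathcal{H}}\!\Bigl[\psi(h)+\sqrt{2}\,K\sum_{k=1}^N\epsilon'_k\,h_k(\vec{x})\Bigr].
\end{equation*}
To prove it, rewrite the LHS as $\tfrac12\sup_{h,h'}\bigl[\psi(h)+\psi(h')+f(h(\vec{x}))-f(h'(\vec{x}))\bigr]$, apply the Lipschitz bound $f(h(\vec{x}))-f(h'(\vec{x}))\le K\,\|h(\vec{x})-h'(\vec{x})\|_2$, and use Khintchine's inequality in the reversed (sharp) form $\|\vec v\|_2\le\sqrt{2}\,\mathbb{E}_{\epsilon'}|\langle\epsilon',\vec v\rangle|$ (Szarek's constant). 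Pull $\mathbb{E}_{\epsilon'}$ outside the supremum by Jensen, and then observe that $\sum_k\epsilon'_k(h_k(\vec{x})-h'_k(\vec{x}))$ is antisymmetric in $(h,h')$ while $\psi(h)+\psi(h')$ is symmetric, so the supremum with the absolute value equals the supremum without it. Splitting the joint supremum in $(h,h')$ and using $\epsilon'\stackrel{d}{=}-\epsilon'$ yields exactly the claimed bound.

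Given this one-variable step, the full inequality follows by induction on $m$. Set $\psi(h):=\sum_{i<m}\epsilon_i f(h(\vec{x}_i))$ and apply the lemma (conditionally on $\epsilon_1,\dots,\epsilon_{m-1}$) to replace the $m$-th term $\epsilon_m f(h(\vec{x}_m))$ by $\sqrt{2}\,K\sum_k\epsilon'_{mk}h_k(\vec{x}_m)$. Iterate for $i=m-1,\dots,1$; at each stage the already-replaced terms become part of the new $\psi$ and are untouched. After $m$ applications the bound reads $\sqrt{2}\,K\,\mathbb{E}\sup_h\sum_{i,k}\epsilon'_{ik}h_k(\vec{x}_i)$, as required. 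A small point worth verifying is that the constant $\sqrt{2}\,K$ appears only once in front, not to the $m$-th power; this is because at every step the factor enters additively inside the new term rather than multiplicatively on the pre-existing $\psi$.

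The main obstacle is the treatment of the absolute value produced by Khintchine: a careless split $|X|\le X_++X_-$ would cost an extra factor of $2$ and destroy the $\sqrt{2}$ constant. The symmetry/antisymmetry trick above removes $|\cdot|$ \emph{before} the supremum is split, which is essential. Using Szarek's sharp lower bound in Khintchine is equally essential; replacing it by the crude $\|\vec v\|_2\le\sum_k|v_k|$ chain would inject a spurious $\sqrt{N}$ factor. Everything else is routine bookkeeping.
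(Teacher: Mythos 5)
The paper does not prove this lemma at all: it is imported verbatim (up to a typo) as Corollary~4 of Maurer's vector-contraction paper, so there is no in-paper proof to compare against. Your reconstruction is essentially Maurer's own argument and I find it correct: the reduction of $\mathbb{E}_{\epsilon}\sup_h[\psi(h)+\epsilon f(h(\vec{x}))]$ to $\tfrac12\sup_{h,h'}[\psi(h)+\psi(h')+f(h(\vec{x}))-f(h'(\vec{x}))]$, the Lipschitz bound, the introduction of the coordinate Rademacher vector via the sharp reverse Khintchine inequality $\|\vec v\|_2\le\sqrt2\,\mathbb{E}|\langle\epsilon',\vec v\rangle|$ (Szarek's constant), the symmetric/antisymmetric removal of the absolute value before splitting the supremum, and the term-by-term induction that keeps the factor $\sqrt2 K$ from compounding are all sound; the only difference from Maurer's write-up is that he obtains the $\sqrt2$ by a coordinate-wise induction on elementary two-point inequalities rather than by citing Szarek, which is a cosmetic distinction. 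Two remarks. First, you correctly flag that the statement as printed ($f\colon\R^N\to\R^N$) is ill-typed on the left-hand side; the intended (and cited) result has real-valued Lipschitz functions, and in fact the paper's application needs $i$-dependent functions $f_i(\vec z)=\|\sigma(\vec z)-\vec{x}_i\|_2$ --- your one-variable step applies verbatim with a different $f_i$ at each index, so this costs nothing, but it would be worth saying explicitly. Second, the ``Jensen'' step is really just $\sup\mathbb{E}\le\mathbb{E}\sup$; the name is harmless but slightly off.
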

As both the $\ell_2$-norm and the function $\sigma$ (the latter by assumption) are $1$-Lipschitz, applying Lemma \ref{ES:lem:Maurer} yields
\begin{equation}
        \mathcal{R}_m (\ell \circ \mathcal{H}^L) 
\leq    \sqrt{2} \mathbb{E} \sup_{h \in \mathcal{H}^L} \frac{1}{m} 
        \sum_{i = 1}^m \sum_{k = 1}^N \epsilon_{ik} h_k(\vec{x}_i).
        \label{ES:rademacher_2}
\end{equation}
%\textcolor{blue}{In order to prove our result, we will actually bound the Rademacher complexity of the slightly larger hypothesis class
%\begin{equation}
% \mathcal{H}_2^L 
% = 
% \{\sigma \circ h:\R^n\to \R^N: h(\vec{y}) 
% = \mat{\Psi} f_\mat{\Phi}^L(\vec{y}), 
% \mat{\Psi}, \mat{\Phi} \in O(N)\}.
% \label{ES:eq:hypothesis_class:H2}    
% \end{equation}
% Obviously, $\mathcal{H}_1^L \subset 
% \mathcal{H}_2^L$ so that we can bound the Rademacher complexity of $\mathcal{H}_1^L$ as}
 %is embedded in $\mathcal{H}_2^L$, that is we have the inclusion
% \begin{equation}
% \mathcal{H}_1^L 
% \subset 
% \mathcal{H}_2^L.
% \end{equation}
%
%Although we mainly study the problem where $\mathcal{H}^L = \mathcal{H}_1^L$, for the proof we will pass to the case $\mathcal{H}^L = \mathcal{H}_2^L$, since this will also give us an upper bound for the Rademacher complexity of $\mathcal{H}_1^L$,
%
%\[
%\mathcal{R}_m (\ell \circ \mathcal{H}_1^L) 
%\leq
%\mathcal{R}_m (\ell \circ \mathcal{H}_2^L).
%\]
%since $\mathcal{H}_1^L \subset \mathcal{H}_2^L$.
In order to derive a bound for the Rademacher complexity, we use chaining techniques.
%of \textit{chaining}. 
Roughly speaking, this refers to bounding the expectation of a stochastic process by geometric properties of its index set (covering numbers at different scales), equipped with an appropriate norm (or metric).
We briefly provide the necessary results in the next section, for a more detailed introduction to the topic, we refer the reader to \citeES{ledoux_probability_2011,talagrand2014upper}.

%%%%%%%%%%%%%%%%%%%%%%%%%%%%
\subsection{Boundedness: Assumptions and Results}

For technical reasons that will become apparent, we will introduce a separate dictionary for the linear transformation after the very final layer and consider the enlarged hypothesis class
\begin{equation}
\mathcal{H}_2^L 
= 
\{\sigma \circ h:\R^n\to \R^N: h(\vec{y}) 
= \mat{\Psi} f_\mat{\Phi}^L(\vec{y}), 
\mat{\Psi}, \mat{\Phi} \in O(N)\}.
\label{ES:eq:hypothesis_class:H2}    
\end{equation}

In order to apply Theorem \ref{ES:thm:ge_vs_rademacher}, the loss function needs to be bounded. Therefore, and as commonly done in the machine learning literature, we assume {(as already mentioned)} that the input is bounded in the $\ell_2$-norm by some constant $B_{\operatorname{in}}$, i.e.,
\begin{equation}
\left \| \vec{x} \right \|_2 \leq B_{\operatorname{in}}.
\label{ES:eq:B_in}
\end{equation}
Furthermore, let us recall {from \eqref{ABHRES:sigma:bound}} that the function $\sigma$
is bounded by $B_{\operatorname{out}}$. In particular, this means that every
$h \in \mathcal{H}^L_2$ (analogously for $\mathcal{H}^L_1$) is also bounded by
\begin{equation}
\left \| h (\vec{y}) \right \|_2
=
\left \| \sigma \left(\mat{\Psi} f_\mat{\Phi}^L(\vec{y})\right) \right \|_2
\leq 
B_{\operatorname{out}}
\label{ES:eq:hypothesis_B_out}
\end{equation}
independently of $\mat{\Psi}$ and $\mat{\Phi}$. 
By passing to the matrix notation (i.e., considering the matrix $\mat{Y}$ collecting all measurements, instead of a single measurement $\vec{y}$), we obtain the similar estimate
\begin{equation}
\left \| h (\mat{Y}) \right \|_F
\leq 
\sqrt{m} B_{\operatorname{out}}
\label{ES:eq:output_matrix}
\end{equation}
{where the additional term of $\sqrt{m}$ takes the number of training points into account.}
By combining \eqref{ES:eq:B_in} and \eqref{ES:eq:hypothesis_B_out}, 
we find that the loss function is bounded by
\begin{eqnarray}
        \ell(h, \vec{y}, \vec{x}) 
& = &       \| h(\vec{y}) - \vec{x} \|_2 
\leq    \| \vec{x} \|_2+ \| h(\vec{y}) \|_2 \nonumber \\
& \leq &    B_{\operatorname{in}} + B_{\operatorname{out}},
\end{eqnarray}
so that $B_{\operatorname{in}} + B_{\operatorname{out}}$ plays the role of $c$ in Theorem \ref{ES:thm:ge_vs_rademacher}. Besides these boundedness assumptions, we can also 
upper bound the output $f_\mat{\Phi}^l(\mat{Y})$ {with respect to the Frobenius norm} after any number of layers $l$ (in particular for $l <L$, when the layer is not directly followed by an application of the $\sigma$ function) as follows . 
%(Note we pass to the matrix notation here, i.e. consider the matrix $\mat{Y}$ collecting the measurement instead of a single measurement $\vec{y}$.)
This will be used later in the main technical result, Theorem~\ref{ES:theorem:main_result}.
 
\begin{lemma}\label{ES:lemma:bound_output_after_l_layers}
For any $\mat{\Phi} \in O(N)$, $l \in \N$, and arbitrary $\tau,\lambda > 0$ in $S_{\tau\lambda }$ in the definition \eqref{ES:def_Staulambda} of $f_\mat{\Phi}^l$, we have
\begin{align}
            \left \|f_\mat{\Phi}^l(\mat{Y}) \right \|_F 
&   \leq    \left \| \tau (\mat{A} \mat{\Phi})^\top \mat{Y}  \right \|_F 
\sum_{k=0}^{l-1} \left \|\mat{I} - \tau \mat{\Phi}^\top \mat{A}^\top \mat{A} \mat{\Phi} \right \|_{2\to 2}^{k} 
\label{ES:eq:norm_bound_individual_1} \\ 
&   \leq   
\tau \|\mat{A}\|_{2\to 2} \|\mat{Y}\|_F 
\sum_{k=0}^{l-1} \left \|\mat{I}  - \tau \mat{A}^\top \mat{A} \right \|_{2\to 2}^{k}.
     \label{ES:eq:norm_bound_individual_2}
\end{align}
\end{lemma}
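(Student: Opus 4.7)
The strategy is a straightforward induction on the number of layers $l$, built from three elementary ingredients: (i) the shrinkage operator $S_{\tau\lambda}$ is $1$-Lipschitz entry-wise and satisfies $S_{\tau\lambda}(0) = 0$, so applied column-wise to any matrix $\mat{V}$ one has $\|S_{\tau\lambda}(\mat{V})\|_F \le \|\mat{V}\|_F$; (ii) submultiplicativity of the Frobenius norm with respect to the spectral norm; (iii) orthogonal invariance of the spectral norm, which is what allows us in the end to eliminate $\mat{\Phi}$ from the bound.

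First I would establish the one-step recursion. Set $\mat{W}_\mat{\Phi} := \mat{I} - \tau \mat{\Phi}^\top \mat{A}^\top \mat{A} \mat{\Phi}$ and $\mat{B}_\mat{\Phi} := \tau (\mat{A}\mat{\Phi})^\top \mat{Y}$. Applying the definition \eqref{ABHRES:def:layer} column-wise and using ingredient (i) gives
\begin{equation*}
\|f_\mat{\Phi}^l(\mat{Y})\|_F
= \|S_{\tau\lambda}(\mat{W}_\mat{\Phi} f_\mat{\Phi}^{l-1}(\mat{Y}) + \mat{B}_\mat{\Phi})\|_F
\le \|\mat{W}_\mat{\Phi} f_\mat{\Phi}^{l-1}(\mat{Y}) + \mat{B}_\mat{\Phi}\|_F,
\end{equation*}
and then the triangle inequality together with submultiplicativity yields $\|f_\mat{\Phi}^l(\mat{Y})\|_F \le \|\mat{W}_\mat{\Phi}\|_{2\to 2} \|f_\mat{\Phi}^{l-1}(\mat{Y})\|_F + \|\mat{B}_\mat{\Phi}\|_F$ for $l \ge 2$. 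The base case $l=1$ follows from $f_1(\vec{y}) = S_{\tau\lambda}(\tau(\mat{A}\mat{\Phi})^\top \vec{y})$ and the same contractivity argument, giving $\|f_\mat{\Phi}^1(\mat{Y})\|_F \le \|\mat{B}_\mat{\Phi}\|_F$. Unrolling the recursion by a simple induction on $l$ produces the geometric sum in \eqref{ES:eq:norm_bound_individual_1}.

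To pass from \eqref{ES:eq:norm_bound_individual_1} to \eqref{ES:eq:norm_bound_individual_2}, I would use orthogonality twice. Since $\mat{\Phi}^\top \mat{\Phi} = \mat{I}$, we can factor $\mat{W}_\mat{\Phi} = \mat{\Phi}^\top(\mat{I} - \tau \mat{A}^\top \mat{A})\mat{\Phi}$, and orthogonal invariance of the spectral norm gives $\|\mat{W}_\mat{\Phi}\|_{2\to 2} = \|\mat{I} - \tau \mat{A}^\top \mat{A}\|_{2\to 2}$. Likewise, $\|\mat{B}_\mat{\Phi}\|_F \le \tau \|\mat{\Phi}^\top\|_{2\to 2} \|\mat{A}^\top\|_{2\to 2} \|\mat{Y}\|_F = \tau \|\mat{A}\|_{2\to 2}\|\mat{Y}\|_F$ because $\mat{\Phi}$ has spectral norm $1$. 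Substituting these two bounds into \eqref{ES:eq:norm_bound_individual_1} immediately yields \eqref{ES:eq:norm_bound_individual_2}.

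There is no serious obstacle: the argument is essentially mechanical once one notices the compatibility $S_{\tau\lambda}(0)=0$, which turns the Lipschitz property into an honest norm bound. The only point deserving care is the orthogonal invariance step, where it is important that $\mat{\Phi}$ is shared across layers and lies in $O(N)$, since this is precisely what makes $\mat{\Phi}$ drop out of \eqref{ES:eq:norm_bound_individual_2}; without orthogonality, one would be stuck with the $\mat{\Phi}$-dependent right-hand side of \eqref{ES:eq:norm_bound_individual_1}.
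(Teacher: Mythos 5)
Your proposal is correct and follows essentially the same route as the paper: an induction on $l$ driven by the contractivity of $S_{\tau\lambda}$ in the Frobenius norm, the triangle inequality, and submultiplicativity, with the second inequality obtained by exploiting the orthogonality of $\mat{\Phi}$ exactly as the paper does. The only (cosmetic) difference is that you make the role of $S_{\tau\lambda}(0)=0$ and the factorization $\mat{I}-\tau\mat{\Phi}^\top\mat{A}^\top\mat{A}\mat{\Phi} = \mat{\Phi}^\top(\mat{I}-\tau\mat{A}^\top\mat{A})\mat{\Phi}$ explicit, where the paper leaves these steps implicit.
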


We will encounter the expression 
$\| \mat{I}-\tau \mat{A}^\top\mat{A}\|_{2\to 2}$ more often in the sequel. 
The following remark is useful and shows it can be easily bounded under realistic assumptions.
In particular, we can use it to simplifiy the above estimate to obtain for arbitrary $\mat{\Psi}, \mat{\Phi} \in O(N)$. Namely, under the condition of  $\tau \| \mat{A} \|_{2\to 2}^2 \leq 1$ {and assuming $\vec{y}_i = \mat{A}(\vec{x}_i)$} we have
\begin{align}
        \left\| \mat{\Psi} f_\mat{\Phi}^L(\mat{Y})\right\|_2 
& =       \left\| f_\mat{\Phi}^L(\mat{Y}) \right\|_2 
\leq    L \tau \|\mat{A}\|_{2\to 2} \|\mat{Y}\|_F  {= L \tau \|\mat{A}\|_{2\to 2} \|\mat{A}\mat{X}\|_F} \notag\\
& \leq {L \|\mat{X}\|_F \leq L \sqrt{m} B_{\operatorname{in}}},
\label{ES:eq:linear_growth_L}
\end{align}
i.e., a linear growth with $L$. 
{Note that this is a worst case bound,
and might possibly be improved 
%without assuming sparsity or RIP. 
%Possibly, this may be improved 
under additional assumptions.}
\begin{remark}\label{ES:rem:tau_and_norm_A}
{Assume $\tau \| \mat{A} \|_{2\to 2}^2 \leq 1$. Then $\| \mat{I}-\tau \mat{A}^\top\mat{A}\|_{2\to 2} \leq 1$.
In the compressive sensing setup ($n < N$), the $N \times N$ - matrix $\mat{A}^\top \mat{A}$ is rank deficient so that even  
$\| \mat{I}-\tau \mat{A}^\top\mat{A}\|_{2\to 2} = 1$ holds in this case.}
\end{remark}

\begin{proof}
%We only need to prove \eqref{ES:eq:norm_bound_individual_1}. 
{Note that the second inequality \eqref{ES:eq:norm_bound_individual_2} 
immediately follows from \eqref{ES:eq:norm_bound_individual_1}
due to the orthogonality of $\mat{\Phi}$.
We will prove 
%Thus, we turn to the prove of 
\eqref{ES:eq:norm_bound_individual_1} 
via induction.} 
Clearly, for $l = 1$, we have 
$\left \|f_\mat{\Phi}^1(\mat{Y}) \right \|_F = \left \|\tau (\mat{A} \mat{\Phi} )^\top \mat{Y}  \right \|_F$.
Assuming the statement is true for $l$, we obtain it for $l+1$ by the following chain of inequalities, using in particular the contractivity $S_{\tau\lambda}$ with respect to the Frobenius norm,
\begin{eqnarray*}
\left \|f_\mat{\Phi}^{l+1}(\mat{Y}) \right \|_F 
& = &
\left \| S_{\tau\lambda }
    \left[
        \left(\mat{I} - \tau \mat{\Phi}^\top \mat{A}^\top \mat{A} \mat{\Phi} \right ) f^{l}_{\mat{\Phi}} ({\mat{Y}}) 
        + \tau (\mat{A} \mat{\Phi} )^\top \mat{Y} 
    \right] 
\right \|_F \\
& \leq &
\left \|
        \left(\mat{I} - \tau \mat{\Phi}^\top \mat{A}^\top \mat{A} \mat{\Phi} \right ) f^{l}_{\mat{\Phi}} ({\mat{Y}})\|_F 
        + \|\tau (\mat{A} \mat{\Phi} )^\top \mat{Y} 
\right \|_F \\
& \leq &
\left\|\mat{I} - \tau \mat{\Phi}^\top \mat{A}^\top \mat{A} \mat{\Phi}\right \|_{2\to 2}
\left\| f^{l}_{\mat{\Phi}} ({\mat{Y}}) \right \|_F 
+
\left \|\tau (\mat{A} \mat{\Phi} )^\top \mat{Y} \right \|_F \\
& \leq &
\left \|\tau (\mat{A} \mat{\Phi} )^\top \mat{Y} \right \|_F
\left(\sum_{k=0}^{l-1} \left \|\mat{I} - \tau \mat{\Phi}^\top \mat{A}^\top \mat{A} \mat{\Phi} \right \|_{2\to 2}^{k+1} \right)
+
\left \|\tau (\mat{A} \mat{\Phi} )^\top \mat{Y} \right \|_F \\
& = &
\left \|\tau (\mat{A} \mat{\Phi} )^\top \mat{Y} \right \|_F
\sum_{k=0}^{l} \left \|\mat{I} - \tau \mat{\Phi}^\top \mat{A}^\top \mat{A} \mat{\Phi} \right \|_{2\to 2}^{k}.
\end{eqnarray*}
where we have used the induction hypothesis
to arrive at the fourth line.
\end{proof}

%Next, we will bound the Rademacher complexity using covering numbers of the hypothesis class. 

%%%%%%%%%%%%%%%%%%%%%%%%%%%%
\subsection{Dudley's Inequality}
\label{sec:dudley}
We use the following version of Dudley's inequality \citeES[Theorem 8.23]{foucart_mathematical_2013}.
%,  with slightly better constants than in \citeES{bartlett_spectrally-normalized_2017}.
To state the theorem, we require additional definitions. Consider a stochastic process $(X_t)_{t \in \mathcal{T}}$ with the index set $\mathcal{T}$ in a space with pseudo-metric $d$ given by
\[
d(s,t) = \left( \mathbb{E} |X_s - X_t|^2 \right)^{1/2}.
\]
A zero-mean process $X_t$ for $t\in \mathcal{T}$ is called \textit{subgaussian}, if
\[
\mathbb{E} \exp(\theta(X_s-X_t))\leq \exp \left( \theta^2 d(s,t)^2/2 \right)
\quad 
\forall \, s,t\in \mathcal T, \, \theta > 0.
\]
Finally, define the radius of $\mathcal T$ as
$\Delta(\mathcal{T}) = \sup_{t\in \mathcal T} \sqrt{\mathbb{E} |X_t|^2}.$
Dudley's inequality, which will be used to bound the Rademacher complexity term, is stated as follows.

\begin{theorem}[Dudley's inequality]
Let $(X_t)_{t \in \mathcal{T}}$ be a centered (i.e. $\mathbb{E} X_{t} = 0$ for every $t \in \mathcal{T}$) subgaussian process with radius $\Delta(\mathcal T)$. Then
\begin{align}
\mathbb{E} \sup_{t\in\mathcal T} X_t \leq 4\sqrt{2}\int_0^{\Delta(\mathcal T)/2} 
\sqrt{\log\mathcal{N}(\mathcal{T}, d, u)} \, \mathrm{d} u .
\end{align}
\end{theorem}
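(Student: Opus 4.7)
The plan is a standard multiscale chaining argument. I would first reduce to a finite (or countable dense) index set by a separability argument so that measurability of $\sup_t X_t$ is not an issue. For each integer $k \geq 0$, construct a minimal $\epsilon_k$-net $\mathcal{T}_k \subset \mathcal{T}$ with $\epsilon_k = 2^{-k}\Delta(\mathcal{T})$, so that $|\mathcal{T}_k| = \mathcal{N}(\mathcal{T}, d, \epsilon_k)$, and let $\pi_k : \mathcal{T} \to \mathcal{T}_k$ send each point to a nearest net element. The telescoping identity
\[
X_t \;=\; X_{\pi_0(t)} + \sum_{k \geq 1}\bigl(X_{\pi_k(t)} - X_{\pi_{k-1}(t)}\bigr)
\]
has only finitely many nonzero terms for each $t$, and the zero-mean hypothesis kills the $X_{\pi_0(t)}$ contribution in expectation, giving
\[
\mathbb{E}\sup_{t \in \mathcal{T}} X_t \;\leq\; \sum_{k \geq 1} \mathbb{E} \sup_{t \in \mathcal{T}} \bigl(X_{\pi_k(t)} - X_{\pi_{k-1}(t)}\bigr).
\]

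For each $k$, the increment is a mean-zero subgaussian variable with parameter $d(\pi_k(t), \pi_{k-1}(t)) \leq \epsilon_k + \epsilon_{k-1} = 3\epsilon_k$ by the triangle inequality through $t$, and as $t$ varies the pair $(\pi_k(t), \pi_{k-1}(t))$ takes at most $|\mathcal{T}_k||\mathcal{T}_{k-1}| \leq \mathcal{N}(\mathcal{T}, d, \epsilon_k)^2$ values. The classical finite maximal inequality $\mathbb{E} \max_{j \leq M} Z_j \leq \sigma \sqrt{2\log M}$ for centered subgaussian $Z_j$ with parameter $\sigma$ then yields
\[
\mathbb{E} \sup_{t}\bigl(X_{\pi_k(t)} - X_{\pi_{k-1}(t)}\bigr) \;\leq\; 6\, \epsilon_k \sqrt{\log \mathcal{N}(\mathcal{T}, d, \epsilon_k)}.
\]

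Finally, I would convert the resulting dyadic series into the integral. Since $u \mapsto \sqrt{\log \mathcal{N}(\mathcal{T}, d, u)}$ is nonincreasing, on the interval $[\epsilon_{k+1}, \epsilon_k]$ (of length $\epsilon_k/2$) the integrand is at least $\sqrt{\log \mathcal{N}(\mathcal{T}, d, \epsilon_k)}$, so that $\sum_{k \geq 1}\int_{\epsilon_{k+1}}^{\epsilon_k}$ bounds the series by a constant multiple of $\int_0^{\epsilon_1} = \int_0^{\Delta(\mathcal{T})/2}$, which is exactly the stated upper limit of integration. The main technical obstacle is tracking constants carefully enough to arrive at the sharp value $4\sqrt{2}$: the base case of the chain (so that a single-point initial net at scale $\Delta(\mathcal{T})$ suffices, which relies on $\sqrt{\mathbb{E}X_t^2} \leq \Delta(\mathcal{T})$), the precise version of the subgaussian maximal inequality (one-sided vs.\ two-sided, and whether $M$ or $2M$ variables are taken), and the integral--sum comparison each contribute multiplicative factors that must be collected with care. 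The only other subtle point is a brief justification of the separability/measurability reduction that lets us work with a countable $\mathcal{T}$.
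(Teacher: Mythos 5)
The paper does not actually prove this statement; it imports it verbatim from Foucart--Rauhut (Theorem 8.23 there), so there is no internal proof to compare against. Your chaining outline is the standard route to Dudley's inequality and the overall architecture (dyadic nets, telescoping along nearest-net projections, subgaussian maximal inequality per level, sum-to-integral comparison) is the right one. However, two of the specific choices you sketch do not deliver the theorem \emph{as stated}, i.e.\ with constant $4\sqrt{2}$ and upper integration limit $\Delta(\mathcal{T})/2$, and one of them is a genuine gap rather than bookkeeping.

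First, the base of the chain. You assert that the zero-mean hypothesis kills the $X_{\pi_0(t)}$ term, which requires $|\mathcal{T}_0|=1$, and you justify a single-point net at scale $\epsilon_0=\Delta(\mathcal{T})$ by $\sqrt{\mathbb{E}X_t^2}\le\Delta(\mathcal{T})$. But $\Delta(\mathcal{T})$ bounds the ``radius'' in a sense that only gives $d(s,t)\le 2\Delta(\mathcal{T})$ for the chaining metric, so a single point is only guaranteed to be a $2\Delta(\mathcal{T})$-net; moreover the subgaussian hypothesis is on \emph{increments} $X_s-X_t$, so you cannot invoke a maximal inequality on the variables $X_s$, $s\in\mathcal{T}_0$, themselves. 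The standard repair is to fix $t_0$, use $\mathbb{E}\sup_t X_t=\mathbb{E}\sup_t(X_t-X_{t_0})$, and chain the differences; the coarsest-scale term then has to be absorbed into the integral explicitly rather than vanishing. Second, the constants: with your level-$k$ estimate (at most $\mathcal{N}(\mathcal{T},d,\epsilon_k)^2$ pairs, increment parameter $3\epsilon_k$) you get $6\,\epsilon_k\sqrt{\log\mathcal{N}(\mathcal{T},d,\epsilon_k)}$ per level, and the comparison $\epsilon_k\sqrt{\log\mathcal{N}(\mathcal{T},d,\epsilon_k)}\le 2\int_{\epsilon_{k+1}}^{\epsilon_k}\sqrt{\log\mathcal{N}(\mathcal{T},d,u)}\,\mathrm{d}u$ then yields an overall constant $12$, not $4\sqrt{2}$. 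To reach $4\sqrt{2}$ one typically chains over the successive nets themselves, bounding $\sup_{t\in\mathcal{T}_k}(X_t-X_{\pi_{k-1}(t)})$ so that only $\mathcal{N}(\mathcal{T},d,\epsilon_k)$ increments appear at level $k$, each with parameter $\epsilon_{k-1}=2\epsilon_k$. So: right strategy, provable theorem, but the proof as written establishes a weaker constant and has an unjustified step at the root of the chain.
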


%%We use this inequality to bound the Rademacher complexity term.

\subsection{Bounding the Rademacher Complexity}
\label{ES:sec:bound_radem}
Recalling our hypothesis spaces introduced above,
obviously $\mathcal{H}_1^L$ is embedded in $\mathcal{H}_2^L$, i.e., we have the inclusion
\begin{equation}
\mathcal{H}_1^L 
\subset 
\mathcal{H}_2^L.
\end{equation}
For fixed number of layers  $L \in \N$ and $i=1,2$ define the set $\mathcal{M}_i \subset \R^{N \times m}$ as follows:
\begin{align}
\mathcal{M}_i  
= & 
\left\{ 
    \left(h(\vec{y}_1) | \dots | h(\vec{y}_m) \right) \in 
    \R^{N \times m}: h \in \mathcal{H}_i^L
\right\}.
\label{ES:eq:hypSet:1}
\end{align}
For the case $i=2$, the set $\mathcal{M}_2$ corresponding to the hypothesis space $\mathcal{H}_2^L$
reads as 
\begin{equation}
\mathcal{M}_2  
=
\left\{ 
\sigma\left(\mat{\Psi} f_\mat{\Phi}^L (\mat{Y})\right) \in \R^{N \times m}: \mat{\Psi} , \mat{\Phi} \in O(N) 
\right\}.
\label{ES:eq:hypSet:2}
\end{equation}

Note that $\mathcal{M}_2$ is parameterized by $\mat{\Psi} , \mat{\Phi} \in O(N)$ 
(as the hypothesis space $\mathcal{H}_2^L$ is), such that we can rewrite \eqref{ES:rademacher_2} as
\begin{equation}
\mathcal{R}_m (\ell \circ \mathcal{H}_2^L)
   \leq  
   \mathbb{E} \sup_{\mat{M} \in \mathcal{M}_2} 
   \frac{1}{m} \sum_{i=1}^m \sum_{k=1}^N \epsilon_{ik} M_{ik}.
   \label{ES:eq:rademacher_2_rewritten}
\end{equation}
We use Dudley's inequality and a covering number argument to bound the Rademacher complexity term 
The Rademacher process defined in \eqref{ES:eq:rademacher_2_rewritten} is a subgaussian process, and therefore, we can apply Dudley's inequality. 
For the set of matrices $\mathcal{M}_2$ defined above, the radius can be estimated as

\begin{align*}
        \Delta(\mathcal{M}_2) 
 =   &   \sup_{h\in \mathcal{H}_2^L} 
            \sqrt{\mathbb{E}\left(\sum_{i=1}^m \sum_{k=1}^N \epsilon_{ik} h_k(\vec{y}_i)\right)^2} 
\leq      \sup_{h\in \mathcal{H}_2^L} 
            \sqrt{\mathbb{E} \sum_{i=1}^m \sum_{k=1}^N \left(h_k(\vec{y}_i)\right)^2} \\
\leq   &   \sup_{h\in \mathcal{H}_2^L} 
            \sqrt{\sum_{i=1}^m  \left \| h(\vec{y}_i)\right\|^2} 
\leq   \sqrt{m} B_{\operatorname{out}},
\end{align*}
where the last inequality has already been stated in \eqref{ES:eq:output_matrix}.
Plugging this bound in Dudley's inequality, we obtain the following upper bound for the Rademacher complexity,
\begin{equation}
      \mathcal{R}_m (\ell \circ \mathcal{H}_2^L)
\leq  \frac{4\sqrt{2}}{m} 
      \int_{0}^{\sqrt{m} B_{\operatorname{out}}/2}        
\sqrt{\log\mathcal{N}(\mathcal{M}_2,\|\cdot\|_F,\epsilon)} \,\mathrm{d}\epsilon.
\label{ES:eq:dudley_bound}
\end{equation}

We only need to find the covering numbers inside the integral. For that, we bound the covering number of the hypothesis class by the covering number of its parameter space. This is done using a  perturbation analysis argument.  
%%%%%%%%%%%%%%%%%%%%%%%%%%%%%%%%%%%%%%%%%%%%%%%%%%%%%%%%%%%%%%%%%%%%%%%%%%%%%%%%%%%%
\subsection{A Perturbation Result}
\label{ABHRESsubsec:no}
%%%%%%%%%%%%%%%%%%%%%%%%%%%%%%%%%%%%%%%%%%%%%%%%%%%%%%%%%%%%%%%%%%%%%%%%%%%%%%%%%%%%

The following theorem relates the effect of perturbation of the parameters on the function outputs. This result will be used to bound their covering numbers.

\begin{theorem}\label{ES:thm:perturbation}
Consider the functions $f_{\mat{\Phi}}^L$ defined as in \eqref{ES:eq:thresholding_networks} 
with $L \geq 2$ and dictionary $\mat{\Phi}$ in $O(N)$. 
Then, for any $\mat{\Phi}_1,\mat{\Phi}_2\in O(N)$ we have
\begin{equation}
\left \| f^L_{\mat{\Phi}_1} ({\mat{Y}}) - f_{\mat{\Phi}_2}^L(\mat{Y}) \right \| _F 
\leq K_L
\|\mat{A} \mat{\Phi}_1  - \mat{A} \mat{\Phi}_2 \|_{2 \to 2}
\label{ES:eq:estimate_via_K_L},
\end{equation}
%%%%%%%%%%%%%%%%%%%%%%%%%%%%%%%%%%%%%%%%%%%%%%%%%%%%%%%%%%%%%%%%%%%%%%%%%%%%%%%%%%%
where $K_L$ 
is given by  
\begin{align}
K_L 
& =
\tau \| \mat{Y} \|_F  \|\mat{I} - \tau \mat{A}^\top \mat{A} \|_{2 \to 2}^{L-1} \nonumber \\
& \qquad +
\tau \| \mat{Y} \|_F \sum_{l=2}^{L} 
{
\|\mat{I} - \tau \mat{A}^\top \mat{A} \|_{2 \to 2}^{L-l} 
}
\left(
1+ 2 \tau  \|\mat{A}\|_{2 \to 2}^2  
\sum_{k=0}^{l-2} \|\mat{I} - \tau \mat{A}^\top \mat{A} \|_{2 \to 2}^k
\right).
\label{ES:eq:perturbation_bound}
\end{align}
If $\tau \|\mat{A}\|_{2 \to 2}^2 \leq 1$, we have the simplified upper bound 
\begin{equation}
K_L \leq  \tau \| \mat{Y} \|_F L(L+3).
\label{ES:eq:K_L_simplified}    
\end{equation}
%%%%%%%%%%%%%%%%%%%%%%%%%%%%%%%%%%%%%%%%%%%%%%%%%%%%%%%%%%%%%%%%%%%%%%%%%%%%%%%%%%%%%
\end{theorem}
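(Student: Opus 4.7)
The plan is to prove the estimate by induction on $L$, propagating the Lipschitz behaviour of one ISTA layer through the recursion. Writing $\mathbf{B}_i := \mathbf{A} \mathbf{\Phi}_i$ and recalling
\[
f_\mathbf{\Phi}^{l+1}(\mathbf{Y}) = S_{\tau\lambda}\!\left[(\mathbf{I} - \tau \mathbf{B}^\top \mathbf{B}) f_\mathbf{\Phi}^l(\mathbf{Y}) + \tau \mathbf{B}^\top \mathbf{Y}\right],
\]
the base case $L=1$ (which I would verify separately as a warm-up) follows immediately from the $1$-Lipschitz property of $S_{\tau\lambda}$ with respect to $\|\cdot\|_F$, yielding $\|f^1_{\mathbf{\Phi}_1}(\mathbf{Y}) - f^1_{\mathbf{\Phi}_2}(\mathbf{Y})\|_F \leq \tau \|\mathbf{Y}\|_F \|\mathbf{B}_1 - \mathbf{B}_2\|_{2 \to 2}$, i.e.\ $K_1 = \tau \|\mathbf{Y}\|_F$.

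For the induction step, I would use $S_{\tau\lambda}$'s contractivity together with the telescoping identity obtained by adding and subtracting $(\mathbf{I} - \tau \mathbf{B}_1^\top \mathbf{B}_1) f^L_{\mathbf{\Phi}_2}(\mathbf{Y})$, which splits $\|f^{L+1}_{\mathbf{\Phi}_1}(\mathbf{Y}) - f^{L+1}_{\mathbf{\Phi}_2}(\mathbf{Y})\|_F$ into three contributions:
\begin{align*}
&\|\mathbf{I} - \tau \mathbf{B}_1^\top \mathbf{B}_1\|_{2 \to 2} \cdot \|f^L_{\mathbf{\Phi}_1}(\mathbf{Y}) - f^L_{\mathbf{\Phi}_2}(\mathbf{Y})\|_F \\
&\qquad + \tau \|\mathbf{B}_1^\top \mathbf{B}_1 - \mathbf{B}_2^\top \mathbf{B}_2\|_{2 \to 2} \cdot \|f^L_{\mathbf{\Phi}_2}(\mathbf{Y})\|_F + \tau \|(\mathbf{B}_1 - \mathbf{B}_2)^\top \mathbf{Y}\|_F.
\end{align*}
Three elementary facts then collapse this into a one-step recursion: (i) orthogonality of $\mathbf{\Phi}_1$ gives $\|\mathbf{I} - \tau \mathbf{B}_1^\top \mathbf{B}_1\|_{2 \to 2} = \|\mathbf{I} - \tau \mathbf{A}^\top \mathbf{A}\|_{2 \to 2} =: \alpha$; (ii) the algebraic identity $\mathbf{B}_1^\top \mathbf{B}_1 - \mathbf{B}_2^\top \mathbf{B}_2 = \mathbf{B}_1^\top(\mathbf{B}_1 - \mathbf{B}_2) + (\mathbf{B}_1 - \mathbf{B}_2)^\top \mathbf{B}_2$ combined with $\|\mathbf{B}_i\|_{2 \to 2} = \|\mathbf{A}\|_{2 \to 2}$ yields $\|\mathbf{B}_1^\top \mathbf{B}_1 - \mathbf{B}_2^\top \mathbf{B}_2\|_{2 \to 2} \leq 2 \|\mathbf{A}\|_{2 \to 2} \|\mathbf{B}_1 - \mathbf{B}_2\|_{2 \to 2}$; (iii) Lemma~\ref{ES:lemma:bound_output_after_l_layers} gives $\|f^L_{\mathbf{\Phi}_2}(\mathbf{Y})\|_F \leq \tau \|\mathbf{A}\|_{2 \to 2} \|\mathbf{Y}\|_F \sum_{k=0}^{L-1} \alpha^k$. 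With $\beta := 2 \tau \|\mathbf{A}\|_{2 \to 2}^2$, the induction hypothesis then produces the recursion
\[
K_{L+1} = \alpha K_L + \tau \|\mathbf{Y}\|_F\Bigl(1 + \beta \sum_{k=0}^{L-1} \alpha^k\Bigr),
\]
whose unfolding is precisely the closed-form expression \eqref{ES:eq:perturbation_bound}.

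The bookkeeping in the recursion's unfolding is the main step that requires care; in particular one must track where each $\alpha^{L-l}$ factor arises from iterating the contractive prefactor. Finally, under $\tau \|\mathbf{A}\|_{2 \to 2}^2 \leq 1$ we have $\alpha \leq 1$ and $\beta \leq 2$, so each summand satisfies $\alpha^{L-l}(1 + \beta \sum_{k=0}^{l-2} \alpha^k) \leq 1 + 2(l-1) = 2l - 1$; summing from $l=2$ to $L$ and adding the leading $\alpha^{L-1} \leq 1$ term yields $K_L \leq \tau \|\mathbf{Y}\|_F (1 + \sum_{l=2}^L (2l-1)) = \tau \|\mathbf{Y}\|_F \cdot L^2 \leq \tau \|\mathbf{Y}\|_F L(L+3)$, giving \eqref{ES:eq:K_L_simplified}. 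The only subtlety is ensuring the orthogonality invariance is applied consistently so that the constants do not inadvertently pick up spurious factors of $\|\mathbf{\Phi}_i\|_{2 \to 2}$ hidden inside intermediate norms.
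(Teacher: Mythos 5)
Your proposal is correct and follows essentially the same route as the paper's proof: induction on the number of layers using the $1$-Lipschitzness of $S_{\tau\lambda}$, a telescoping split of the layer map, orthogonal invariance of the relevant spectral norms, and Lemma~\ref{ES:lemma:bound_output_after_l_layers} to control the intermediate outputs, all collapsing to the affine recursion $K_{L+1} = \alpha K_L + B_{L+1}$ whose unfolding is \eqref{ES:eq:perturbation_bound}. As a minor remark, your bias-term estimate $\tau\|(\mat{B}_1-\mat{B}_2)^\top \mat{Y}\|_F \le \tau\|\mat{Y}\|_F\|\mat{B}_1-\mat{B}_2\|_{2\to 2}$ is sharper than the factor $2\tau\|\mat{Y}\|_F$ appearing in the paper's own derivation, so your constants reproduce the stated formula for $K_L$ exactly and yield $K_L \le \tau\|\mat{Y}\|_F L^2$ before relaxing to $L(L+3)$.
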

%%%%%%%%%%%%%%%%%%%%%%%%%%%%%%%%%%%%%%%%%%%%%%%%%%%%%%%%%%%%%%%%%%%%%%%%%%%%%%%%

{
The bound \eqref{ES:eq:K_L_simplified} follows from  the observation
in Remark \ref{ES:rem:tau_and_norm_A}.
}

%The proof of \eqref{ES:eq:K_L_simplified} is based on the following %observation. 
%Because it will be useful sometimes in the sequel, we already state it at this point.
%\begin{remark}\label{ES:rem:tau_and_norm_A}
%The $N \times N$ - matrix $\mat{A}^\top \mat{A}$ is rank deficient in the %compressive sensing setup ($n < N$). %If in addition $\tau \| \mat{A} %\|_{2\to 2}^2 \leq 1$ then  
%$\| \mat{I}-\tau \mat{A}^\top\mat{A}\|_{2\to 2} = 1$.
%\end{remark}

\begin{proof} 
We formally set $f^0_{\mat{\Phi}_1} (\mat{Y}) = f^0_{\mat{\Phi}_2}(\mat{Y}) = \mat{Y}$ for a unified treatment of all layers $l \geq 1$. Using the fact that $S_{\tau\lambda }$ is $1$-Lipschitz we obtain
%%%%%%%%%%%%%%%%%%%%%%%%%%%%%%%%%%%%%%%%%%%%%%%%%%%%%%%%%%%%%%%%%%%%%%%%%%%%%%%%%%%%
%%%%%%%%%%%%%%%%%%%%%%%%%%%%%%%%%%%%%%%%%%%%%%%%%%%%%%%%%%%%%%%%%%%%%%%%%%%%%%%%%%%%
%%%%%%%%%%%%%%%%%%%%%%%%%%%%%%%%%%%%%%%%%%%%%%%%%%%%%%%%%%%%%%%%%%%%%%%%%%%%%%%%%%%%
\begin{eqnarray}
&      & \left \| f^l_{\mat{\Phi}_1} ({\mat{Y}}) - f_{\mat{\Phi}_2}^l(\mat{Y}) \right \| _F \nonumber \\
& \leq &  \left \| 
                \left(\mat{I} - \tau ( \mat{A} \mat{\Phi}_1)^\top \mat{A} \mat{\Phi}_1 \right ) f^{l-1}_{\mat{\Phi}_1} ({\mat{Y}}) 
                + \tau (\mat{A} \mat{\Phi}_1 )^\top \mat{Y}  \right . \nonumber \\
             &  & \left. \quad -  
                \left(\mat{I} - \tau ( \mat{A} \mat{\Phi}_2)^\top \mat{A} \mat{\Phi}_2 \right ) f^{l-1}_{\mat{\Phi}_2} ({\mat{Y}}) 
                - \tau (\mat{A} \mat{\Phi}_2 )^\top \mat{Y} 
        \right \| _F \nonumber \\
& \leq &    \left \| 
                \left(\mat{I} - \tau ( \mat{A} \mat{\Phi}_1)^\top \mat{A} \mat{\Phi}_1 \right ) f^{l-1}_{\mat{\Phi}_1} ({\mat{Y}}) 
                -
                \left(\mat{I} - \tau ( \mat{A} \mat{\Phi}_2)^\top \mat{A} \mat{\Phi}_2 \right ) f^{l-1}_{\mat{\Phi}_2} ({\mat{Y}}) 
              \right \| _F  \nonumber \\
                 &  &  \quad + \left \| 
                \tau (\mat{A} \mat{\Phi}_1 )^\top \mat{Y}  
              - \tau (\mat{A} \mat{\Phi}_2 )^\top \mat{Y} 
            \right \| _F \nonumber \\   
& \leq &    \left \| 
                \left(\mat{I} - \tau ( \mat{A} \mat{\Phi}_1)^\top \mat{A} \mat{\Phi}_1 \right ) f^{l-1}_{\mat{\Phi}_1} ({\mat{Y}}) 
                -
                \left(\mat{I} - \tau ( \mat{A} \mat{\Phi}_2)^\top \mat{A} \mat{\Phi}_2 \right ) f^{l-1}_{\mat{\Phi}_2} ({\mat{Y}}) 
              \right \| _F \label{eq:estimate_further}   \\
         & & \quad + 2 \tau \left \|\mat{Y} \right \|_F
        \left \|
            \mat{A} \mat{\Phi}_1 - \mat{A} \mat{\Phi}_2
        \right \|_{2 \to 2}.  \nonumber             
\end{eqnarray}
The term \eqref{eq:estimate_further} is estimated further as follows.
\begin{align*}
   &    \left \| 
                \left(\mat{I} - \tau ( \mat{A} \mat{\Phi}_1)^\top \mat{A} \mat{\Phi}_1 \right ) f^{l-1}_{\mat{\Phi}_1} ({\mat{Y}}) 
                -
                \left(\mat{I} - \tau ( \mat{A} \mat{\Phi}_2)^\top \mat{A} \mat{\Phi}_2 \right ) f^{l-1}_{\mat{\Phi}_2} ({\mat{Y}}) 
        \right \| _F  \\ 
\leq &  \left \| 
                \left(\mat{I} - \tau ( \mat{A} \mat{\Phi}_1)^\top \mat{A} \mat{\Phi}_1 \right ) f^{l-1}_{\mat{\Phi}_1} ({\mat{Y}}) 
                -
                \left(\mat{I} - \tau ( \mat{A} \mat{\Phi}_1)^\top \mat{A} \mat{\Phi}_2 \right ) f^{l-1}_{\mat{\Phi}_1} ({\mat{Y}}) 
        \right. \\
&   \quad    
        \left. +
        \left(\mat{I} - \tau ( \mat{A} \mat{\Phi}_1)^\top \mat{A} \mat{\Phi}_2 \right ) f^{l-1}_{\mat{\Phi}_1} ({\mat{Y}}) 
                -
        \left(\mat{I} - \tau ( \mat{A} \mat{\Phi}_2)^\top \mat{A} \mat{\Phi}_2 \right ) f^{l-1}_{\mat{\Phi}_1} ({\mat{Y}}) 
        \right \| _F  \\
&   \quad     
        \left. +
        \left(\mat{I} - \tau ( \mat{A} \mat{\Phi}_2)^\top \mat{A} \mat{\Phi}_2 \right ) f^{l-1}_{\mat{\Phi}_1} ({\mat{Y}}) 
                -
        \left(\mat{I} - \tau ( \mat{A} \mat{\Phi}_2)^\top \mat{A} \mat{\Phi}_2 \right ) f^{l-1}_{\mat{\Phi}_2} ({\mat{Y}}) 
        \right \| _F   \\       
\leq &  \left \| 
                \left(\mat{I} - \tau ( \mat{A} \mat{\Phi}_1)^\top \mat{A} \mat{\Phi}_1 \right ) f^{l-1}_{\mat{\Phi}_1} ({\mat{Y}}) 
                -
                \left(\mat{I} - \tau ( \mat{A} \mat{\Phi}_1)^\top \mat{A} \mat{\Phi}_2 \right ) f^{l-1}_{\mat{\Phi}_1} ({\mat{Y}}) 
        \right. \\
&   \quad    
        \left. +
        \left(\mat{I} - \tau ( \mat{A} \mat{\Phi}_1)^\top \mat{A} \mat{\Phi}_2 \right ) f^{l-1}_{\mat{\Phi}_1} ({\mat{Y}}) 
                -
        \left(\mat{I} - \tau ( \mat{A} \mat{\Phi}_2)^\top \mat{A} \mat{\Phi}_2 \right ) f^{l-1}_{\mat{\Phi}_1} ({\mat{Y}}) 
        \right. \\
&   \quad     
        \left. +
        \left(\mat{I} - \tau ( \mat{A} \mat{\Phi}_2)^\top \mat{A} \mat{\Phi}_2 \right ) 
        \left( 
        f^{l-1}_{\mat{\Phi}_1} ({\mat{Y}})  
        - 
         f^{l-1}_{\mat{\Phi}_2} ({\mat{Y}}) 
         \right)
        \right \| _F   \\      
\leq &  \left \| 
                  \tau ( \mat{A} \mat{\Phi}_1)^\top \mat{A} \mat{\Phi}_1   f^{l-1}_{\mat{\Phi}_1} ({\mat{Y}}) 
                -
                  \tau ( \mat{A} \mat{\Phi}_1)^\top \mat{A} \mat{\Phi}_2   f^{l-1}_{\mat{\Phi}_1} ({\mat{Y}}) 
        \right. \\
&   \quad    
        \left. +
        \tau ( \mat{A} \mat{\Phi}_1)^\top \mat{A} \mat{\Phi}_2 f^{l-1}_{\mat{\Phi}_1} ({\mat{Y}}) 
                -
        \tau ( \mat{A} \mat{\Phi}_2)^\top \mat{A} \mat{\Phi}_2 f^{l-1}_{\mat{\Phi}_1} ({\mat{Y}}) 
        \right \| _F  \\
&   \quad     
        + \left \|
        \left(\mat{I} - \tau ( \mat{A} \mat{\Phi}_2)^\top \mat{A} \mat{\Phi}_2 \right ) 
        \right \|_{2 \to 2}
        \left \|
        f^{l-1}_{\mat{\Phi}_1} ({\mat{Y}})  
        - 
         f^{l-1}_{\mat{\Phi}_2} ({\mat{Y}}) 
        \right \|_F   \\      
\leq &  
        \left\| \tau ( \mat{A} \mat{\Phi}_1)^\top \right\|_{2 \to 2}
        \left\| 
                (\mat{A} \mat{\Phi}_1 -  \mat{A} \mat{\Phi}_2 )
                f^{l-1}_{\mat{\Phi}_1} ({\mat{Y}}) 
        \right \|_F \\
&   \quad    +
         \tau 
         \left\| 
         (\mat{A} \mat{\Phi}_1)^\top - (\mat{A} \mat{\Phi}_2)^\top 
         \right\|_{2 \to 2}
         \left \|
        \mat{A} \mat{\Phi}_2 f^{l-1}_{\mat{\Phi}_1} ({\mat{Y}}) 
        \right \| _F  \\
&   \quad     
        + \left \|
        \left(\mat{I} - \tau ( \mat{A} \mat{\Phi}_2)^\top \mat{A} \mat{\Phi}_2 \right ) 
        \right \|_{2 \to 2}
        \left \|
        f^{l-1}_{\mat{\Phi}_1} ({\mat{Y}})  
        - 
         f^{l-1}_{\mat{\Phi}_2} ({\mat{Y}}) 
        \right \|_F   \\         
\leq &  
        \tau \left\| \mat{A} \right\|_{2 \to 2}
        \left\| 
                \mat{A} \mat{\Phi}_1 -  \mat{A} \mat{\Phi}_2
        \right \|_{2 \to 2} 
        \left\| f^{l-1}_{\mat{\Phi}_1} ({\mat{Y}}) \right \|_F 
 +
         \tau \left\| \mat{A} \right\|_{2 \to 2}
        \left\| 
                \mat{A} \mat{\Phi}_1 -  \mat{A} \mat{\Phi}_2
        \right \|_{2 \to 2} 
         \left \|f^{l-1}_{\mat{\Phi}_1} ({\mat{Y}}) \right \| _F  \\
&   \quad  
        + \left \|
        \left(\mat{I} - \tau ( \mat{A} \mat{\Phi}_2)^\top \mat{A} \mat{\Phi}_2 \right ) 
        \right \|_{2 \to 2}
        \left \|
        f^{l-1}_{\mat{\Phi}_1} ({\mat{Y}})  
        - 
         f^{l-1}_{\mat{\Phi}_2} ({\mat{Y}}) 
        \right \|_F   \\  
=   &  
        2 \tau \left\| \mat{A} \right\|_{2 \to 2}
                \left\| 
                \mat{A} \mat{\Phi}_1 -  \mat{A} \mat{\Phi}_2
        \right \|_{2 \to 2} 
        \left\| f^{l-1}_{\mat{\Phi}_1} ({\mat{Y}}) \right \|_F 
        + 
        \left \|\mat{I} - \tau \mat{A}^\top \mat{A} \right \|_{2 \to 2}
        \left \|
        f^{l-1}_{\mat{\Phi}_1} ({\mat{Y}})  
        - 
         f^{l-1}_{\mat{\Phi}_2} ({\mat{Y}}) 
        \right \|_F  .
\end{align*}
Plugging this back into \eqref{eq:estimate_further} gives us
\begin{eqnarray}
&      & \left \| f^l_{\mat{\Phi}_1} ({\mat{Y}}) - f_{\mat{\Phi}_2}^l(\mat{Y}) \right \| _F \label{ES:eq:main_estimate_perturbation} \\
& \leq &        
    \left \|\mat{I} - \tau \mat{A}^\top \mat{A} \right \|_{2 \to 2}
    \left \|
        f^{l-1}_{\mat{\Phi}_1} ({\mat{Y}})  
        - 
         f^{l-1}_{\mat{\Phi}_2} ({\mat{Y}}) 
    \right \|_F \nonumber \\
    & & \quad     + \tau \left( 
         2 \left \|\mat{Y} \right \|_F +
         2 \left\| \mat{A} \right\|_{2 \to 2} 
         \left\| f^{l-1}_{\mat{\Phi}_1} ({\mat{Y}}) \right \|_F 
         \right)
        \left \|
            \mat{A} \mat{\Phi}_1 - \mat{A} \mat{\Phi}_2
        \right \|_{2 \to 2} \nonumber  \\        
& \leq &        
    A
    \left \|
        f^{l-1}_{\mat{\Phi}_1} ({\mat{Y}})  
        - 
         f^{l-1}_{\mat{\Phi}_2} ({\mat{Y}}) 
    \right \|_F 
         + B_l
        \left \|
            \mat{A} \mat{\Phi}_1 - \mat{A} \mat{\Phi}_2
        \right \|_{2 \to 2} \label{eq:A_and_B_l}  ,
        \label{ES:eq:estimate_with_A_and_B_l}
\end{eqnarray}
where $A$ and $B_l$ in the previous estimate 
\eqref{ES:eq:estimate_with_A_and_B_l} are given by
\begin{eqnarray*}
&    A & = \left \|\mat{I} - \tau \mat{A}^\top \mat{A} \right \|_{2 \to 2}, \\
&    Z_0 & = 0,   \qquad
     Z_l  = \sum_{k=0}^{l-1} 
      \left \|\mat{I} - \tau \mat{A}^\top \mat{A} \right \|_{2\to 2}^{k}, \quad l \geq 1, \\
&    B_l & = \tau \| \mat{Y} \|_F 
        \left( 2  + 2 \tau \left \| \mat{A} \right \|_{2 \to 2}^2  Z_{l-1}
        \right),   \qquad l \geq 1. 
\end{eqnarray*}

%%%%%%%%%%%%%%%%%%%%%%%%%%%%%%%%%%%%%%%%%%%%%%%%%%%%%%%%%%%%%%%%%%%%%%%%%%%%%%%%%%%%
%%%%%%%%%%%%%%%%%%%%%%%%%%%%%%%%%%%%%%%%%%%%%%%%%%%%%%%%%%%%%%%%%%%%%%%%%%%%%%%%%%%%
%%%%%%%%%%%%%%%%%%%%%%%%%%%%%%%%%%%%%%%%%%%%%%%%%%%%%%%%%%%%%%%%%%%%%%%%%%%%%%%%%%%%
Using these abbreviations, the general formula for $K_L$ in \eqref{ES:eq:perturbation_bound} has the compact form
\begin{equation}
    K_L = \sum_{l=1}^L A^{L-l}B_l, \qquad L \geq 1.
\label{ES:eq:K_L_short}
\end{equation}
%which indeed for $L \geq 2$ is just a compact notation for \eqref{eq:perturbation_bound}.
Based on \eqref{ES:eq:estimate_with_A_and_B_l} we prove via induction that \eqref{ES:eq:estimate_via_K_L} holds for any 
number of layers $L \in \N$  with $K_L$ given by \eqref{ES:eq:K_L_short}.
For $L = 1$, we can directly calculate the constant $K_1$ via
\begin{align*}
    \left \| f^1_{\mat{\Phi}_1} ({\mat{Y}}) - f_{\mat{\Phi}_2}^1(\mat{Y}) \right \| _F 
=  & \left \| S_{\tau\lambda}(\tau (\mat{A}\mat{\Phi}_1)^\top\mat{Y})-S_{\tau\lambda}(\tau (\mat{A}\mat{\Phi}_2)^\top\mat{Y})\right\| _F \\
\leq & \tau \|\mat{Y}\|_F \left \| \mat{A}\mat{\Phi}_1 - \mat{A}\mat{\Phi}_2   \right \| _{2 \to 2},
\end{align*}
so that $\tau \|\mat{Y}\|_F \leq 2 \tau \|\mat{Y}\|_F = B_1 = K_1$, as claimed in \eqref{ES:eq:K_L_short}.

% Plugging this into the estimate for the second layer $L=2$ 
% and using \eqref{ES:eq:estimate_with_A_and_B_l}, we obtain 
% \begin{align*}
%     \left \| f_{\mat{\Phi}_1}^2 ({\mat{Y}}) - f_{\mat{\Phi}_2}^2(\mat{Y}) \right \| _F 
% & \leq 
%     A \left \| f_{\mat{\Phi}_1}^1 ({\mat{Y}}) - f_{\mat{\Phi}_2}^{l-1}(\mat{Y}) \right \| _F 
%     + 
%     B_2 \| \mat{A} \mat{\Phi}_2 - \mat{A} \mat{\Phi}_1 \|_{2 \to 2} \\
% & \leq 
%     A K_1 \|\mat{A}\mat{\Phi}_2-\mat{A}\mat{\Phi}_1 \|_{2 \to 2}+B_2\|\mat{A}\mat{\Phi}_2-\mat{A} \mat{\Phi}_1\|_{2\to 2} \\
% & \leq 
%     (A B_1 + B_2) 
%     \|\mat{A}\mat{\Phi}_2-\mat{A} \mat{\Phi}_1\|_{2\to 2}.
% \end{align*}
% Hence, $K_2 = A B_1 + B_2 = \sum_{l=1}^2 A^{2-l} B_l$,
% which is of the claimed form  \eqref{ES:eq:K_L_short} with $L=2$. 

Now we proceed with the induction step, assuming formula  \eqref{ES:eq:K_L_short} to hold for some $L \in \N$.
Applying the estimate after \eqref{ES:eq:main_estimate_perturbation} 
for the output after layer $L+1$, we obtain
\begin{align*}
\left \| f_{\mat{\Phi}_1}^{L+1} ({\mat{Y}}) - f_{\mat{\Phi}_2}^{L+1}(\mat{Y}) \right \| _F 
\leq &  
A \left \|  f^{L}_{\mat{\Phi}_1} ({\mat{Y}})  - f^{L}_{\mat{\Phi}_2} ({\mat{Y}}) \right \|_F
+
B_{L+1} \left \| \mat{A} \mat{\Phi}_2 - \mat{A} \mat{\Phi}_1 \right \|_{2 \to 2} \\
\leq &  
A K_L   \| \mat{A} \mat{\Phi}_2 - \mat{A} \mat{\Phi}_1  \|_{2 \to 2} 
+
B_{L+1}    \| \mat{A} \mat{\Phi}_2 - \mat{A} \mat{\Phi}_1  \|_{2 \to 2} \\
\leq &  
(A K_L  +  B_{L+1} )  \| \mat{A} \mat{\Phi}_2 - \mat{A} \mat{\Phi}_1  \|_{2 \to 2},
\end{align*}
%which gives us the correct expression for $K_{L+1}$ and thus finishes the proof by induction as follows.
and therefore,
\begin{eqnarray*}
        K_{L+1} 
& = &   A K_L  +  B_{L+1}
  =     A \sum_{l=1}^L A^{L-l}B_l  + B_{L+1} 
 %=    \sum_{l=1}^{L} A^{L-l+1}B_l  + B_{L+1} 
  =     \sum_{l=1}^{L+1} A^{(L+1)-l} B_l.  
\end{eqnarray*}
This is the desired expression for $K_{L+1}$ and finishes the proof of \eqref{ES:eq:estimate_via_K_L}.
It remains to prove the upper bound \eqref{ES:eq:K_L_simplified}.
In Remark \ref{ES:rem:tau_and_norm_A} we have observed that 
$\| \mat{I}-\tau\mat{A}^\top\mat{A} \|_{2\to 2} = 1$ when 
$\tau \|\mat{A}\|_{2\to 2}^2 \leq 1$. Therefore we obtain 
\begin{eqnarray*}
        K_L 
& = &  \sum_{l=1}^L A^{L-l}B_l
\leq  \sum_{l=1}^L B_l
=   \tau \| \mat{Y} \|_F  \sum_{l=1}^L 
        \left( 2  + 2 \tau \left \| \mat{A} \right \|_{2 \to 2}^2  Z_{l-1}
        \right) \\
& \leq &  2 L \tau \| \mat{Y} \|_F    
    +
     2 \tau \| \mat{Y} \|_F  \sum_{l=1}^L Z_{l-1} 
 \leq 2 L \tau \| \mat{Y} \|_F    
        +
     2 \tau \| \mat{Y} \|_F  \sum_{l=1}^L l     \\
& = & 
% 2 L \tau \| \mat{Y} \|_F  + \tau \| \mat{Y} \|_F L(L+1)   
% =  \tau \| \mat{Y} \|_F (2L + L(L+1))
  \tau \| \mat{Y} \|_F L(L+3),
\end{eqnarray*} 
finishing the proof of the theorem.
\end{proof}

The following result is an adaptation of the previous theorem to take the special form of the final layer into account (a final linear transformation, followed by applying the function $\sigma$).

\begin{corollary}\label{ES:cor:with_Psi} 
Consider the thresholding networks $\mat{\Psi} f_{\mat{\Phi}}^L \in \mathcal{H}_2^L$ as defined in {Section \ref{ES:sec:bound_radem}}, with $L \geq 2$ and $\mat{\Psi}, \mat{\Phi} \in O(N)$. 
Then, for any $\mat{\Phi}_1,\mat{\Phi}_2\in O(N)$ and $\mat{\Psi}_1,\mat{\Psi}_2\in O(N)$ we have
\begin{eqnarray}
& &
\left \| 
\sigma(\mat{\Psi}_1 f^L_{\mat{\Phi}_1} ({\mat{Y}})) - \sigma(\mat{\Psi}_2 f_{\mat{\Phi}_2}^L(\mat{Y}))
\right \| _F \nonumber \\
& \leq  &
M_L \|\mat{\Psi}_1 - \mat{\Psi}_2 \|_{2 \to 2} +
K_L \|\mat{A} \mat{\Phi}_1  - \mat{A} \mat{\Phi}_2 \|_{2 \to 2},
\label{ES:eq:estimate_via_M_L_and_K_L}
\end{eqnarray}
with $K_L$ as in Theorem \ref{ES:thm:perturbation} and 
%$M_L = \|f^L_{\mat{\Phi}_1} ({\mat{Y}})\|_F$.
\begin{equation}
M_L 
= 
\tau \|\mat{A}\|_{2\to 2} \|\mat{Y}\|_F 
\sum_{k=0}^{L-1} \left \|\mat{I}  - \tau \mat{A}^\top \mat{A} \right \|_{2\to 2}^{k} .
\end{equation}
Under the additional assumption that $\tau \|\mat{A}\|_{2 \to 2}^2 \leq 1$
%, the combination of the simplified upper bounds \eqref{ES:eq:linear_growth_L} for $M_L$ 
%and \eqref{ES:eq:K_L_simplified} for $K_L$ gives 
we have
\begin{align*}
& 
\left \| 
\sigma(\mat{\Psi}_1 f^L_{\mat{\Phi}_1} ({\mat{Y}})) - \sigma(\mat{\Psi}_2 f_{\mat{\Phi}_2}^L(\mat{Y}))
\right \| _F \nonumber \\
& \leq  
 \tau \|Y\|_F\left(L \|A\|_{2 \to 2} \| \mat{\Psi}_1 - \mat{\Psi}_2 \|_{2 \to 2} + L(L+3)\|\mat{A} \mat{\Phi}_1  - \mat{A} \mat{\Phi}_2 \|_{2 \to 2}\right). 
 %\tau \|\mat{A}\|_{2\to 2} \|\mat{Y}\|_F L \|\mat{\Psi}_1 - \mat{\Psi}_2 \|_{2 \to 2} \nonumber \\
 %& %\leq  
 %& \quad 
 %+
 %\tau \| \mat{Y} \|_F (3L + L^2) \|\mat{A} \mat{\Phi}_1  - \mat{A} \mat{\Phi}_2 \|_{2 \to 2} \nonumber.
\end{align*}
\end{corollary}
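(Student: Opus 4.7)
The plan is to reduce the estimate to a combination of Theorem~\ref{ES:thm:perturbation} (which controls perturbation in $\mat{\Phi}$) and Lemma~\ref{ES:lemma:bound_output_after_l_layers} (which bounds the output of the unfolded iterations), with an elementary triangle inequality argument to handle the additional outer matrix $\mat{\Psi}$. First, I would use the fact that $\sigma$ is $1$-Lipschitz (see \eqref{ABHRES:sigma:bound}, applied column-wise and combined with the Frobenius norm) to drop $\sigma$:
\begin{equation*}
\left\| \sigma(\mat{\Psi}_1 f^L_{\mat{\Phi}_1}(\mat{Y})) - \sigma(\mat{\Psi}_2 f^L_{\mat{\Phi}_2}(\mat{Y})) \right\|_F
\leq
\left\| \mat{\Psi}_1 f^L_{\mat{\Phi}_1}(\mat{Y}) - \mat{\Psi}_2 f^L_{\mat{\Phi}_2}(\mat{Y}) \right\|_F.
\end{equation*}

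Next, I would insert $\pm \mat{\Psi}_1 f^L_{\mat{\Phi}_2}(\mat{Y})$ and invoke the triangle inequality to split the right-hand side into a part measuring the discrepancy in $\mat{\Phi}$ and a part measuring the discrepancy in $\mat{\Psi}$:
\begin{align*}
&\left\| \mat{\Psi}_1 f^L_{\mat{\Phi}_1}(\mat{Y}) - \mat{\Psi}_2 f^L_{\mat{\Phi}_2}(\mat{Y}) \right\|_F \\
&\qquad \leq \left\| \mat{\Psi}_1 \bigl( f^L_{\mat{\Phi}_1}(\mat{Y}) - f^L_{\mat{\Phi}_2}(\mat{Y}) \bigr) \right\|_F
+ \left\| (\mat{\Psi}_1 - \mat{\Psi}_2) f^L_{\mat{\Phi}_2}(\mat{Y}) \right\|_F.
\end{align*}
Since $\mat{\Psi}_1 \in O(N)$ preserves the Frobenius norm, the first term equals $\| f^L_{\mat{\Phi}_1}(\mat{Y}) - f^L_{\mat{\Phi}_2}(\mat{Y}) \|_F$, which Theorem~\ref{ES:thm:perturbation} bounds by $K_L \|\mat{A}\mat{\Phi}_1 - \mat{A}\mat{\Phi}_2\|_{2\to 2}$. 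For the second term I would use the standard inequality $\|\mat{B}\mat{C}\|_F \leq \|\mat{B}\|_{2\to 2}\|\mat{C}\|_F$, giving an upper bound $\|\mat{\Psi}_1 - \mat{\Psi}_2\|_{2\to 2} \|f^L_{\mat{\Phi}_2}(\mat{Y})\|_F$. Applying Lemma~\ref{ES:lemma:bound_output_after_l_layers} to the factor $\|f^L_{\mat{\Phi}_2}(\mat{Y})\|_F$ yields exactly $M_L$ as the relevant prefactor, producing \eqref{ES:eq:estimate_via_M_L_and_K_L}.

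For the simplified bound under $\tau\|\mat{A}\|_{2\to 2}^2 \leq 1$, Remark~\ref{ES:rem:tau_and_norm_A} gives $\|\mat{I} - \tau \mat{A}^\top\mat{A}\|_{2\to 2} \leq 1$, so the geometric sum in the definition of $M_L$ collapses to at most $L$, yielding $M_L \leq L\tau\|\mat{A}\|_{2\to 2}\|\mat{Y}\|_F$. Combining this with the simplified estimate $K_L \leq \tau \|\mat{Y}\|_F L(L+3)$ already proven in Theorem~\ref{ES:thm:perturbation} gives the asserted final inequality.

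There is no real obstacle here: the proof is essentially a triangle inequality argument built on top of the two previous bounds. The only point requiring a little care is recognising that the orthogonality of $\mat{\Psi}_1$ (rather than any nontrivial property of $\sigma$) is what reduces the $\mat{\Phi}$-dependent part to the quantity already controlled by Theorem~\ref{ES:thm:perturbation}; after that, everything is book-keeping.
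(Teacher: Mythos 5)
Your proposal is correct and follows essentially the same route as the paper: peel off $\sigma$ by $1$-Lipschitzness, insert a mixed term, bound the $\mat{\Phi}$-discrepancy by Theorem~\ref{ES:thm:perturbation} and the $\mat{\Psi}$-discrepancy by $\|f^L_{\mat{\Phi}}(\mat{Y})\|_F\,\|\mat{\Psi}_1-\mat{\Psi}_2\|_{2\to 2}$ together with Lemma~\ref{ES:lemma:bound_output_after_l_layers}. The only cosmetic difference is that you insert $\pm\mat{\Psi}_1 f^L_{\mat{\Phi}_2}(\mat{Y})$ while the paper inserts $\pm\mat{\Psi}_2 f^L_{\mat{\Phi}_1}(\mat{Y})$, which is immaterial since the $M_L$ bound holds for any $\mat{\Phi}\in O(N)$.
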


\begin{proof} Let us begin with the following estimates, which now include the application of the measurement and the respective dictionary after the final layer. By the $1$-Lipschitzness of $\sigma$, adding mixed terms and applying the triangle inequality, and finally using Theorem \ref{ES:thm:perturbation} for the second summand in the last step we obtain
\begin{eqnarray*}
&    & \left \|
    \sigma \left( \mat{\Psi}_1 f^L_{\mat{\Phi}_1} (\mat{Y}) \right) - 
    \sigma \left( \mat{\Psi}_2 f_{\mat{\Phi}_2}^L (\mat{Y}) \right) 
    \right \|_F \\
&  \leq & 
\left \|\mat{\Psi}_1 f^L_{\mat{\Phi}_1} ({\mat{Y}}) - \mat{\Psi}_2 f^L_{\mat{\Phi}_1} ({\mat{Y}})
+ \mat{\Psi}_2 f^L_{\mat{\Phi}_1} ({\mat{Y}})- \mat{\Psi}_2 f_{\mat{\Phi}_2}^L(\mat{Y}) \right \|_F \\
&  \leq & 
\left\|\mat{\Psi}_1 f^L_{\mat{\Phi}_1} ({\mat{Y}})-\mat{\Psi}_2 f^L_{\mat{\Phi}_1} ({\mat{Y}})\right\|_F +
\left\|\mat{\Psi}_2 f^L_{\mat{\Phi}_1} ({\mat{Y}})-\mat{\Psi}_2 f_{\mat{\Phi}_2}^L(\mat{Y}) \right \|_F  \\ 
&  \leq & 
\left\|f^L_{\mat{\Phi}_1} ({\mat{Y}})\right\|_F \left\|\mat{\Psi}_1 - \mat{\Psi}_2 \right\| _{2 \to 2} +
\left\|f^L_{\mat{\Phi}_1} ({\mat{Y}}) - f_{\mat{\Phi}_2}^L(\mat{Y}) \right \|_F \\
&  \leq & 
\left\|f^L_{\mat{\Phi}_1} ({\mat{Y}})\right\|_F \left\|\mat{\Psi}_1 - \mat{\Psi}_2 \right\| _{2 \to 2} +
K_L \left\|\mat{A} \mat{\Phi}_1  -  \mat{A} \mat{\Phi}_2  \right \|_{2 \to 2}.
\end{eqnarray*}
Now, \eqref{ES:eq:estimate_via_M_L_and_K_L} follows from Lemma \ref{ES:lemma:bound_output_after_l_layers}.
The additional simplified bounds then easily follow from the respective ones in Theorem \ref{ES:thm:perturbation} as well as in \eqref{ES:eq:linear_growth_L}.
\end{proof} 

\begin{remark}
One may try a similar computation like in the proof above for the hypothesis space $\mathcal{H}_1^L$
instead $\mathcal{H}_2^L$. However, after the analog estimate for $\mat{\Phi}_1, \mat{\Phi}_2 \in O(N)$,
\begin{equation*}
\left \|
    \mat{\Phi}_1 f^L_{\mat{\Phi}_1} (\mat{Y}) - \mat{\Phi}_2 f_{\mat{\Phi}_2}^L (\mat{Y}) 
\right \|_F \\
\leq  
\left\|f^L_{\mat{\Phi}_1} ({\mat{Y}})\right\|_F \left\|\mat{\Phi}_1 - \mat{\Phi}_2 \right\| _{2 \to 2} +
K_L \left\|\mat{A} \mat{\Phi}_1  -  \mat{A} \mat{\Phi}_2  \right \|_{2 \to 2},
\end{equation*}
we need to consider both  
$\left\| \mat{A} \mat{\Phi}_1  -  \mat{A} \mat{\Phi}_2  \right \|_{2 \to 2}$
and $\left\| \mat{\Phi}_1 - \mat{\Phi}_2 \right \|_{2 \to 2}$ for later covering number arguments. Using  $\mathcal{H}_2^L$ helps to obtain more concise covering numbers for the class.
% it is difficult to continue the proof, since we aim for an estimate with respect to 
% $\left\| \mat{A} \mat{\Phi}_1  -  \mat{A} \mat{\Phi}_2  \right \|_{2 \to 2}$
% instead of $\left\| \mat{\Phi}_1 - \mat{\Phi}_2 \right \|_{2 \to 2}$.
Therefore, we decouple the single dictionary applied after the final layer from the previous layers (which all appear together with $\mat{A}$).
\end{remark}

%%%%%%%%%%%%%%%%%%%%%%%%%%%%%%%%%%%%%%%%%%%%%%%%%%%%%%%%%%%%%%%%%%%%%%%%%%%%%%%%%%%%
\subsection{Covering number estimates}
\label{ABHRESsubsec:no_2}
%%%%%%%%%%%%%%%%%%%%%%%%%%%%%%%%%%%%%%%%%%%%%%%%%%%%%%%%%%%%%%%%%%%%%%%%%%%%%%%%%%%% 
{Our proof is built on Dudley's integral in \eqref{ES:eq:dudley_bound}. We need to compute covering numbers $\mathcal{N}\left( \mathcal{M}_2, \|\cdot\|_F, \epsilon \right)$ at different scales $\epsilon > 0$ to evaluate the integral for the space $\mathcal{M}_2$. We start from the following lemma \citeES[Proposition C.3]{foucart_mathematical_2013} and adapt it to our problem. 
% Therefore, we require covering numbers estimates for the proof of our main result. 
% In this subsection, we start with the following well-known Lemma and then adapt it to our situation. 
}
% \textcolor{red}{recall definition, basic properties of covering numbers?}

\begin{lemma}\label{ES:lemma_covering_numbers}
Let $ \epsilon > 0$ and let $\| \cdot \|$ be a norm on a 
$n$-dimensional vector space $V$. 
Then, for any subset $U \subseteq B_{\| \cdot \|} := \{x \in V: \|x\| \leq 1\}$ 
it holds
\[
\mathcal{N} \left( U, \|\cdot\|, \epsilon \right) 
\leq 
\left(1 + \frac{2}{\epsilon} \right)^n.
\]
\end{lemma}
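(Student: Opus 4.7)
The plan is to use the classical volumetric (packing--covering) argument for subsets of a finite-dimensional normed space. The key idea is that a maximal $\epsilon$-separated subset of $U$ is automatically an $\epsilon$-net for $U$, and its cardinality can be controlled by comparing volumes of disjoint small balls against the volume of a slightly inflated unit ball.

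First, I would fix $U \subseteq B_{\|\cdot\|}$ and let $\{x_1,\dots,x_M\} \subset U$ be a maximal $\epsilon$-separated set, i.e., $\|x_i - x_j\| > \epsilon$ for all $i \neq j$. By maximality, every $x \in U$ must satisfy $\|x - x_i\| \leq \epsilon$ for some $i$ (otherwise $x$ could be added to the set, contradicting maximality). Therefore $\mathcal{N}(U, \|\cdot\|, \epsilon) \leq M$, and it suffices to bound $M$.

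Next, observe that the open balls $B_i := \{v \in V : \|v - x_i\| < \epsilon/2\}$ are pairwise disjoint (by the separation property) and are all contained in the enlarged ball $(1 + \epsilon/2) B_{\|\cdot\|}$, since each $x_i$ lies in $B_{\|\cdot\|}$. Fixing any translation-invariant Lebesgue measure $\mathrm{vol}$ on $V \cong \mathbb{R}^n$, each $B_i$ has volume $(\epsilon/2)^n \mathrm{vol}(B_{\|\cdot\|})$, and $(1+\epsilon/2) B_{\|\cdot\|}$ has volume $(1+\epsilon/2)^n \mathrm{vol}(B_{\|\cdot\|})$. Adding and comparing,
\begin{equation*}
M \left(\frac{\epsilon}{2}\right)^n \mathrm{vol}(B_{\|\cdot\|}) \;\leq\; \left(1+\frac{\epsilon}{2}\right)^n \mathrm{vol}(B_{\|\cdot\|}),
\end{equation*}
and dividing yields $M \leq (1 + 2/\epsilon)^n$, which is exactly the claimed bound.

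There is essentially no obstacle here beyond being careful that the volumetric argument only needs a finite-dimensional vector space (so that a translation-invariant reference measure exists and scales as $r^n$ under dilations), which is given by assumption. The only minor subtlety is handling boundary issues cleanly (open versus closed balls in the packing step), which is standard and does not affect the final inequality.
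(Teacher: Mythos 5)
Your proof is correct and is essentially the standard volumetric packing--covering argument; the paper does not reprove this lemma but cites it as Proposition C.3 of Foucart--Rauhut, whose proof is exactly this one (maximal $\epsilon$-separated set as a net, disjoint $\epsilon/2$-balls inside the inflated ball $(1+\epsilon/2)B_{\|\cdot\|}$, volume comparison). No gaps.
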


% \begin{proof}
% The following lemma is a standard result for covering number estimates which can be found in various sources; as a reference, see
% \cite[Proposition C.3]{foucart_mathematical_2013}.
% \end{proof}

%\textcolor{blue}{REMOVE the following sentence in red, which seems to be a %left-over from a previous version:}

%\textcolor{red}{
%Consequently, we obtain the following covering number estimate for the orthogonal %group $O(N)$.}

% \begin{corollary}
% For the covering numbers of the orthogonal group $(O(N ), \|\cdot\|_{2 \to 2})$ 
% equipped with the spectral norm we have
% \[
% \mathcal{N} \left( O(N ), \|\cdot\|_{2 \to 2}, \epsilon \right) 
% \leq 
% \left(1 + \frac{2}{\epsilon} \right)^{N^2}.
% \]
% \end{corollary}

% \begin{proof}
% The orthogonal group $O(N )$ is contained in $B_{\| \, \cdot \, \|_{2 \to 2}}^{N  \times N }$ 
% and therefore Lemma \ref{ES:lemma_covering_numbers} applies.
% \end{proof}

The next lemma provides a bound for product spaces, based on individual covering numbers.
\begin{lemma}\label{ES:lemma:covering_number_product_space}
Consider two metric spaces $(\mathcal{S}_1, d_1), (\mathcal{S}_2, d_2)$.
We define the product metric $\mathcal{S}$, equipped with the metric $d$ by
\begin{equation}
    \mathcal{S} = (\mathcal{S}_1 \times \mathcal{S}_2, d),
    \qquad
    d(x,y) = \sum_{k=1}^2 d_k(x_k,y_k),
\end{equation}
where $x = (x_1, x_2), y = (y_1, y_2) \in \mathcal{S}$.
Then, we have the covering number estimate
\begin{equation}
     \mathcal{N}\left(\mathcal{S}, d, \varepsilon \right)   
\leq \prod_{k=1}^2   
     \mathcal{N}\left(\mathcal{S}_k, d_k, \varepsilon/2 \right).
\end{equation}
\end{lemma}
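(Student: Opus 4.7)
The plan is to construct an explicit cover of the product space by taking pairs formed from covers of each factor. Concretely, let $N_k = \mathcal{N}(\mathcal{S}_k, d_k, \varepsilon/2)$ for $k=1,2$, and fix a minimal $\varepsilon/2$-cover $\{c^{(k)}_1, \dots, c^{(k)}_{N_k}\} \subseteq \mathcal{S}_k$ of $\mathcal{S}_k$ with respect to $d_k$. The candidate cover of $\mathcal{S}$ will be
\[
\mathcal{C} := \bigl\{ (c^{(1)}_i, c^{(2)}_j) : i \in [N_1], \, j \in [N_2] \bigr\} \subseteq \mathcal{S}_1 \times \mathcal{S}_2,
\]
which has cardinality $|\mathcal{C}| = N_1 N_2 = \prod_{k=1}^2 \mathcal{N}(\mathcal{S}_k, d_k, \varepsilon/2)$.

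Next I verify that $\mathcal{C}$ is an $\varepsilon$-cover of $(\mathcal{S}, d)$. Given an arbitrary point $x = (x_1, x_2) \in \mathcal{S}$, by definition of the covers there exist indices $i, j$ such that $d_1(x_1, c^{(1)}_i) \leq \varepsilon/2$ and $d_2(x_2, c^{(2)}_j) \leq \varepsilon/2$. Then, using the definition of the product metric $d$,
\[
d\bigl(x, (c^{(1)}_i, c^{(2)}_j)\bigr) = d_1(x_1, c^{(1)}_i) + d_2(x_2, c^{(2)}_j) \leq \frac{\varepsilon}{2} + \frac{\varepsilon}{2} = \varepsilon.
\]
Hence every point of $\mathcal{S}$ lies within distance $\varepsilon$ of some element of $\mathcal{C}$, so $\mathcal{N}(\mathcal{S}, d, \varepsilon) \leq |\mathcal{C}|$, which is precisely the claim.

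There is essentially no serious obstacle here — the only mild subtlety is the choice of the factor $1/2$ in the radius of each factor cover, which is forced by the additive nature of the product metric (if $d$ were, say, the maximum rather than the sum, one could cover each factor at radius $\varepsilon$ instead). The argument extends verbatim to finite products of arbitrarily many metric spaces, with the radius split as $\varepsilon/n$ for an $n$-fold product.
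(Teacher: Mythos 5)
Your proof is correct and follows exactly the same route as the paper: form the product of minimal $\varepsilon/2$-covers of the factors and verify, via the additivity of the product metric, that it is an $\varepsilon$-cover of $\mathcal{S}$. No discrepancies to note.
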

\begin{proof}
Suppose that, for $k = 1,2$, we have individual coverings of $\mathcal{S}_k$ at level 
$\varepsilon/2$ of cardinality $\mathcal{N}\left(\mathcal{S}_k, d_k , \varepsilon/2 \right)$. 
We will show that the product of all these $\varepsilon/2$-nets is an $\varepsilon$-net for the product space $S$.
Indeed let $x = (x_1, x_2) \in \mathcal{S}$, i.e. $x_k \in \mathcal{S}_k$. Then, for each $x_k \in \mathcal{S}_k$, there exists some element $y_k$ in the $\varepsilon/2$-net of $\mathcal{S}_k$, i.e. $d_k(x_k,y_k) \leq \varepsilon/2$. Then, $y = (y_1,  y_2)$ is an element of the product of all nets, and by the definition of the metric $d$ there is  $d(x,y) \leq \varepsilon/2 + \varepsilon/2 = \varepsilon$.
\end{proof}

The following lemma provides a covering number estimate of $\mat{A}$ applied to the orthogonal group.

\begin{lemma} \label{ES:lem:coverin_A_Phi}
For a fixed matrix $\mat{A} \in \R^{n  \times N }$ consider the set $\mathcal{W}$ defined by
\begin{equation}
\mathcal{W} = \{\mat{A} \mat{\Phi} : \mat{\Phi} \in O(N)\} 
\subset \R^{n \times N},
\label{ES:eq:set_W}
\end{equation} 
i.e., $\mat{A}$ applied to the orthogonal group. The covering number estimate is given by
\[
\mathcal{N} \left(\mathcal{W} , \|\cdot\|_{2 \to 2}, \epsilon \right) 
\leq 
\left(1 + \frac{2 \|\mat{A}\|_{2 \to 2}}{\epsilon} \right)^{nN}.
\]
\end{lemma}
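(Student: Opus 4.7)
The plan is to observe that $\mathcal{W}$ sits inside a ball in the space of $n \times N$ matrices equipped with the spectral norm, and then to apply the volumetric covering estimate of Lemma \ref{ES:lemma_covering_numbers} with the right scaling.

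First, I would note that for every $\mat{\Phi} \in O(N)$ we have $\|\mat{A}\mat{\Phi}\|_{2\to 2} \leq \|\mat{A}\|_{2\to 2}\|\mat{\Phi}\|_{2\to 2} = \|\mat{A}\|_{2\to 2}$, since orthogonal matrices have unit spectral norm. Hence
\[
\mathcal{W} \;\subseteq\; \|\mat{A}\|_{2\to 2} \cdot B^{nN}_{\|\cdot\|_{2\to 2}},
\]
where $B^{nN}_{\|\cdot\|_{2\to 2}}$ denotes the unit ball of the spectral norm inside $\R^{n\times N}$. This embeds $\mathcal{W}$ into a scaled unit ball of an $nN$-dimensional normed space, which is exactly the setting of Lemma \ref{ES:lemma_covering_numbers}.

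Next I would use the fact that covering numbers are invariant under simultaneous rescaling of the set and the radius: for any $c>0$, an $\epsilon$-net of $\mathcal{W}$ corresponds to an $(\epsilon/c)$-net of $c^{-1}\mathcal{W}$. Applying this with $c = \|\mat{A}\|_{2\to 2}$ gives
\[
\mathcal{N}(\mathcal{W}, \|\cdot\|_{2\to 2}, \epsilon)
= \mathcal{N}\!\left(\|\mat{A}\|_{2\to 2}^{-1}\mathcal{W}, \|\cdot\|_{2\to 2}, \epsilon/\|\mat{A}\|_{2\to 2}\right),
\]
and the rescaled set is now a subset of the unit ball in an $nN$-dimensional normed vector space.

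Finally I would invoke Lemma \ref{ES:lemma_covering_numbers} with dimension $nN$ and radius parameter $\epsilon/\|\mat{A}\|_{2\to 2}$, yielding
\[
\mathcal{N}(\mathcal{W}, \|\cdot\|_{2\to 2}, \epsilon)
\leq \left(1 + \frac{2\|\mat{A}\|_{2\to 2}}{\epsilon}\right)^{nN},
\]
as claimed. There is no real obstacle here: the only point worth double-checking is that Lemma \ref{ES:lemma_covering_numbers} is applied correctly to a subset of the (scaled) unit ball of spectral norm on $\R^{n\times N}$, which is a normed space of dimension $nN$, and that the scaling invariance of covering numbers is used consistently. Notably, this bound does not use any structural property of $O(N)$ beyond $\|\mat{\Phi}\|_{2\to 2}=1$, so it is a coarse but sufficient estimate for the Dudley integration in the subsequent Rademacher complexity computation.
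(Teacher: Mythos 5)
Your proof is correct and follows essentially the same route as the paper: bound $\|\mat{A}\mat{\Phi}\|_{2\to 2}$ by $\|\mat{A}\|_{2\to 2}$, rescale so that $\mathcal{W}$ sits in the unit ball of the $nN$-dimensional spectral-norm space, and apply Lemma \ref{ES:lemma_covering_numbers}. If anything, your version is slightly cleaner, since the paper's intermediate remark about covering $O(N)$ itself (with exponent $N^2$) is not actually needed for the final bound with exponent $nN$.
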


\begin{proof}
First note that $\mathcal{W}$ can be rewritten as
\begin{equation}
\mathcal{W} 
= \left \{ 
    \|\mat{A}\|_{2 \to 2} \frac{\mat{A} \mat{\Phi}}{\|\mat{A}\|_{2 \to 2}} : \mat{\Phi} \in O(N)
  \right \}. 
\label{ES:eq:set_W:2}
\end{equation}
For the covering numbers of the orthogonal group $(O(N ), \|\cdot\|_{2 \to 2})$ 
equipped with the spectral norm we have
\[
\mathcal{N} \left( O(N ), \|\cdot\|_{2 \to 2}, \epsilon \right) 
\leq 
\left(1 + \frac{2}{\epsilon} \right)^{N^2}.
\]
This follows from the fact that the orthogonal group $O(N )$ is contained in $B_{\| \, \cdot \, \|_{2 \to 2}}^{N  \times N }$, 
and therefore Lemma \ref{ES:lemma_covering_numbers} applies. %Using this covering number we get
% Next, we apply Lemma \ref{ES:lemma_covering_numbers} to obtain the statement as follows.
This bound then gives
\begin{eqnarray*}
\mathcal{N} \left(\mathcal{W} , \|\,\cdot\,\|_{2 \to 2}, \epsilon \right) 
& = &
\mathcal{N} \left(
\left\{\mat{A} \mat{\Phi} / \|\mat{A}\|_{2 \to 2} : \mat{\Phi} \in O(N) \right \}, 
\|\,\cdot\,\|_{2 \to 2}, \epsilon/\|\mat{A}\|_{2 \to 2} \right)  \\
& \leq & 
\left(1 + \frac{2 \|\mat{A}\|_{2 \to 2}}{\epsilon} \right)^{nN}.
\end{eqnarray*}
\end{proof}

%%%%%%%%%%%%%%%%%%%%%%%%%%%%%%%%%%%%%%%%%%%%%%%%%%%%%%%%%%%%%%%%%

Recall that for Dudleys inequality, we need to estimate the covering numbers
$\mathcal{N} \left( \mathcal{M}_2, \|\, \cdot \,\|_{2 \to 2}, \epsilon \right)$ 
of the set $\mathcal{M}_2$ defined in \eqref{ES:eq:hypSet:2}. In Corollary \ref{ES:cor:with_Psi}, we showed we can estimate 
distances in $\mathcal{M}_2$ via distances of the underlying parameters, 
$\|\mat{\Psi}_1 - \mat{\Psi}_2 \|_{2 \to 2}$ and
$\|\mat{A} \mat{\Phi}_1  - \mat{A} \mat{\Phi}_2 \|_{2 \to 2}$. 
We make use of this in the next corollary, which prepares the application of Dudleys inequality afterwards.

\begin{corollary}
The covering numbers of the set $\mathcal{M}_2$ are bounded by
%\begin{equation*}
%\log \left(\mathcal{N} \left( \mathcal{M}_2, \|\, \cdot \,\|_{2 \to 2}, \epsilon %\right) \right) 
%\leq 
%N^2 \cdot \log \left( 1 + \frac{2 M_L}{\epsilon} \right)  
%+
%nN \cdot \log \left( 1 + \frac{2\|\mat{A}\|_{2\to 2}K_L}{\epsilon} \right).
%\end{equation*}
\begin{align*}
&\log \left(\mathcal{N} \left( \mathcal{M}_2, \|\, \cdot \,\|_{2 \to 2}, \epsilon \right) \right) \\  
&\qquad\leq 
N^2 \cdot \log \left( 1 + \frac{4 M_L}{\epsilon} \right)  
+
nN \cdot \log \left( 1 + \frac{4 \|\mat{A}\|_{2\to 2}K_L}{\epsilon} \right).
\end{align*}
\end{corollary}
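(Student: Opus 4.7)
The plan is to reduce a covering of $\mathcal{M}_2$ to a covering of its parameter space $O(N) \times \{\mat{A}\mat{\Phi} : \mat{\Phi} \in O(N)\}$, using the perturbation bound from Corollary \ref{ES:cor:with_Psi}, and then to bound the covering of this product via Lemmas \ref{ES:lemma:covering_number_product_space}, \ref{ES:lemma_covering_numbers} and \ref{ES:lem:coverin_A_Phi}.

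First, equip the parameter space $O(N) \times \mathcal{W}$ (with $\mathcal{W}$ as in \eqref{ES:eq:set_W}) with the weighted sum metric
\begin{equation*}
d\bigl((\mat{\Psi}_1,\mat{W}_1),(\mat{\Psi}_2,\mat{W}_2)\bigr)
= M_L \|\mat{\Psi}_1 - \mat{\Psi}_2\|_{2\to 2} + K_L \|\mat{W}_1 - \mat{W}_2\|_{2\to 2}.
\end{equation*}
By Corollary \ref{ES:cor:with_Psi}, the map $(\mat{\Psi},\mat{A}\mat{\Phi}) \mapsto \sigma(\mat{\Psi} f_\mat{\Phi}^L(\mat{Y}))$ is $1$-Lipschitz from $(O(N)\times\mathcal{W}, d)$ to $(\mathcal{M}_2, \|\cdot\|_F)$. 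Hence any $\epsilon$-net for the parameter space with respect to $d$ induces an $\epsilon$-net for $\mathcal{M}_2$, so
\begin{equation*}
\mathcal{N}(\mathcal{M}_2, \|\cdot\|_F, \epsilon)
\leq
\mathcal{N}\bigl(O(N)\times\mathcal{W}, d, \epsilon\bigr).
\end{equation*}

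Next, I apply Lemma \ref{ES:lemma:covering_number_product_space} to the product parameter space (rescaling the two factors so that the weighted sum metric takes the form required by that lemma), obtaining
\begin{equation*}
\mathcal{N}\bigl(O(N)\times\mathcal{W}, d, \epsilon\bigr)
\leq
\mathcal{N}\bigl(O(N), M_L\|\cdot\|_{2\to 2}, \epsilon/2\bigr) \cdot
\mathcal{N}\bigl(\mathcal{W}, K_L\|\cdot\|_{2\to 2}, \epsilon/2\bigr).
\end{equation*}
Pulling the scalar factors $M_L$ and $K_L$ out of the covering radii, the first factor reduces to covering $O(N)$ at scale $\epsilon/(2M_L)$ in the spectral norm, which by Lemma \ref{ES:lemma_covering_numbers} (using $O(N) \subset B^{N\times N}_{\|\cdot\|_{2\to 2}}$) is at most $(1 + 4M_L/\epsilon)^{N^2}$. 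The second factor reduces to covering $\mathcal{W}$ at scale $\epsilon/(2K_L)$, and Lemma \ref{ES:lem:coverin_A_Phi} yields the bound $(1 + 4\|\mat{A}\|_{2\to 2}K_L/\epsilon)^{nN}$.

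Multiplying these two estimates and taking the logarithm gives the claimed bound. The only point where one must be a little careful is the bookkeeping of the two scalings: the factor $2$ from Lemma \ref{ES:lemma:covering_number_product_space} and the factors $M_L$ and $K_L$ from the perturbation inequality combine to produce the constants $4M_L$ and $4\|\mat{A}\|_{2\to 2}K_L$ in the covering radii. Otherwise the argument is a direct concatenation of the preceding lemmas and is the main reason we introduced the enlarged hypothesis class $\mathcal{H}_2^L$, which decouples the final linear map from the shared dictionary and thus yields a product structure on the parameter space.
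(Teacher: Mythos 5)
Your proposal is correct and follows essentially the same route as the paper: both reduce the covering of $\mathcal{M}_2$ to a covering of the rescaled parameter space $M_L \cdot O(N) \times K_L \cdot \mathcal{W}$ via the Lipschitz bound of Corollary \ref{ES:cor:with_Psi}, split the product with Lemma \ref{ES:lemma:covering_number_product_space} at scale $\epsilon/2$, and invoke Lemmas \ref{ES:lemma_covering_numbers} and \ref{ES:lem:coverin_A_Phi} for the two factors, with identical bookkeeping of the constants. Your write-up is in fact slightly more explicit than the paper's about why the perturbation bound justifies the first inequality.
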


\begin{proof}
Using the definition of the set \eqref{ES:eq:set_W}, we have
\begin{eqnarray*}
\mathcal{N} \left(K_L \{\mat{A} \mat{\Phi} : \mat{\Phi} \in O(N)\}, \|\cdot\|_{2 \to 2}, \epsilon \right)
& = & 
\mathcal{N} \left(\{\mat{A} \mat{\Phi}: \mat{\Phi} \in O(N)\}, \|\cdot\|_{2 \to 2}, \epsilon/K_L \right) \\
& \leq &  
\left( 1 + \frac{2\|\mat{A}\|_{2\to 2}K_L}{\epsilon} \right)^{nN}.
\end{eqnarray*}
Furthermore, {since $O(N )\subset B_{\| \, \cdot \, \|_{2 \to 2}}^{N  \times N }$,  
and  by Lemma \ref{ES:lemma_covering_numbers}}
%\allowdisplaybreaks
\begin{eqnarray*}
\mathcal{N} \left(M_L \cdot O(N), \|\cdot\|_{2 \to 2}, \epsilon \right) 
& = & 
\mathcal{N} \left( O(N), \|\cdot\|_{2 \to 2}, \epsilon/M_L \right) \\
& \leq &  
\left( 1 + \frac{2 M_L}{\epsilon} \right)^{N^2}.
\end{eqnarray*}
%{We can now estimate the covering number of $\mathcal{M}_2$ as}
Applying Lemma \ref{ES:lemma:covering_number_product_space} and the previous estimates (with $\varepsilon/2$ instead of $\varepsilon$), we can now estimate the covering number of $\mathcal{M}_2$
\begin{align*}
            \mathcal{N} \left( \mathcal{M}_2, \|\, \cdot \,\|_F, \epsilon \right) 
& \leq     \mathcal{N}
            \left(
            M_L \cdot O(N) \times K_L \cdot \mathcal{W}, \|\, \cdot \,\|_{2 \to 2}, \epsilon 
            \right) \\
& \leq     \mathcal{N} \left(M_L \cdot O(N), \|\, \cdot \,\|_{2 \to 2}, \epsilon/2 \right)             
            \mathcal{N} \left(K_L \cdot \mathcal{W}, \|\, \cdot \,\|_{2 \to 2}, \epsilon/2 \right)  \\
& \leq     \left( 1 + \frac{4M_L}{\epsilon} \right)^{N^2}
            \left( 1 + \frac{4\|\mat{A}\|_{2\to 2}K_L}{\epsilon} \right)^{nN},
\end{align*}
which immediately gives us the desired statement.
\end{proof}

%%%%%%%%%%%%%%%%%%%%%%%%%%%%%%%%%%%%%%%%%%%%%%%%%%%%%%%%%%%%%%%%%%%
\subsection{Main result}
\label{ABHRES:subsec:mainresult}
%%%%%%%%%%%%%%%%%%%%%%%%%%%%%%%%%%%%%%%%%%%%%%%%%%%%%%%%%%%%%%%%%%%

Finally, we are able to state and prove the main result of this section.

\begin{theorem}\label{ES:theorem:main_result}
Consider the hypothesis space $\mathcal{H}_2^L$ defined in \eqref{ES:eq:hypothesis_class:H2}. 
With probability at least $1-\delta$, for all $h \in \mathcal{H}_2^L$, the generalization error  is bounded as
\begin{eqnarray*}
\mathcal{L}(h) 
& \leq &
\hat{\mathcal{L}}(h) 
+ 8 B_{\operatorname{out}} \sqrt{\frac{Nn}{m}}
  \sqrt{\log{e\left(1 +\frac{8 K_L\|\mat{A}\|_{2\to 2}}{\sqrt{m} B_{\operatorname{out}}}\right)}} \\
&  & \quad 
+ 8 B_{\operatorname{out}} \frac{N}{\sqrt{m}}
    \sqrt{\log{e\left(1 +\frac{8 M_L}{\sqrt{m} B_{\operatorname{out}}  }\right)}} 
+ 4 (B_{\operatorname{in}} + B_{\operatorname{out}}) \sqrt{\frac{2\log(4/\delta)}{m}},
\label{ES:eq:main_thm_generalization_bound}
\end{eqnarray*}
where $K_L$ is the {constant} in \eqref{ES:eq:perturbation_bound}. 
\end{theorem}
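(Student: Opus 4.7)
The plan is to chain the tools already built up in this section: Theorem~\ref{ES:thm:ge_vs_rademacher} (Rademacher-based generalization bound), Lemma~\ref{ES:lem:Maurer} (vector-valued contraction), Dudley's inequality, and the covering-number estimate for $\mathcal{M}_2$ just proved. First I would invoke Theorem~\ref{ES:thm:ge_vs_rademacher} with $c = B_{\operatorname{in}} + B_{\operatorname{out}}$, which is the uniform bound on the loss established earlier via \eqref{ES:eq:B_in} and \eqref{ES:eq:hypothesis_B_out}. This immediately produces the last term of the stated bound and reduces the problem to controlling $\mathcal{R}_m(\ell \circ \mathcal{H}_2^L)$.

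Second, since the map $\vec{v} \mapsto \|\vec{v} - \vec{x}_i\|_2$ is $1$-Lipschitz, Lemma~\ref{ES:lem:Maurer} converts the scalar Rademacher sum into a double Rademacher sum indexed by $\mathcal{M}_2$ (cost: a $\sqrt{2}$ factor). The radius estimate computed just before \eqref{ES:eq:dudley_bound} gives $\Delta(\mathcal{M}_2) \leq \sqrt{m}\,B_{\operatorname{out}}$, so Dudley's inequality delivers
\[
\mathcal{R}_m(\ell \circ \mathcal{H}_2^L) \leq \frac{8}{m} \int_0^{\sqrt{m}\,B_{\operatorname{out}}/2} \sqrt{\log \mathcal{N}(\mathcal{M}_2, \|\cdot\|_F, \epsilon)}\, d\epsilon,
\]
where the $8$ is the product of $\sqrt{2}$ (Maurer) and $4\sqrt{2}$ (Dudley).

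Third, I would plug in the covering-number bound from the preceding corollary and use $\sqrt{a+b} \leq \sqrt{a} + \sqrt{b}$ to split the integral into a term of shape $N \int_0^R \sqrt{\log(1 + 4M_L/\epsilon)}\, d\epsilon$ and a term of shape $\sqrt{nN}\int_0^R \sqrt{\log(1 + 4\|\mat{A}\|_{2\to 2} K_L/\epsilon)}\, d\epsilon$, with $R = \sqrt{m}\,B_{\operatorname{out}}/2$.

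The technical heart, and the step requiring the most care, is the evaluation of the generic integral $\int_0^R \sqrt{\log(1+D/\epsilon)}\, d\epsilon$. I would proceed by the substitution $\epsilon = Ru$ and Jensen's inequality for the concave function $\sqrt{\cdot}$, giving
\[
\int_0^R \sqrt{\log(1+D/\epsilon)}\, d\epsilon = R\int_0^1 \sqrt{\log(1+\tfrac{D}{Ru})}\, du \leq R\sqrt{\int_0^1 \log\!\bigl(1+\tfrac{D}{Ru}\bigr) du}.
\]
The inner integral evaluates in closed form to $(1+D/R)\log(1+D/R) - (D/R)\log(D/R)$, which after writing $(1+D/R)\log(1+D/R) = \log(1+D/R) + (D/R)\log(1+D/R)$ and using $\log(1+1/x) \leq 1/x$ is bounded by $\log(1+D/R) + 1 = \log(e(1+D/R))$. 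This yields the clean bound $\int_0^R \sqrt{\log(1+D/\epsilon)}\, d\epsilon \leq R\sqrt{\log(e(1+D/R))}$.

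Finally, substituting $R = \sqrt{m}\,B_{\operatorname{out}}/2$ and the two choices $D \in \{4M_L, 4\|\mat{A}\|_{2\to 2}K_L\}$ produces ratios $D/R = 8M_L/(\sqrt{m}B_{\operatorname{out}})$ and $8\|\mat{A}\|_{2\to 2}K_L/(\sqrt{m}B_{\operatorname{out}})$ inside the logarithms. Combining the factor $8/m$ from Dudley, the factor $R = \sqrt{m}\,B_{\operatorname{out}}/2$ from the integral bound, the factor $2$ from Theorem~\ref{ES:thm:ge_vs_rademacher}, and the dimensional prefactors $N$ and $\sqrt{nN}$ yields exactly the two middle terms $8 B_{\operatorname{out}}\sqrt{Nn/m}\,\sqrt{\log e(1+\dots)}$ and $8 B_{\operatorname{out}} (N/\sqrt{m}) \sqrt{\log e(1+\dots)}$ in the statement, finishing the proof. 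The main obstacle is really only bookkeeping of the constants in this final combination; the integral estimate via Jensen is the one non-mechanical step.
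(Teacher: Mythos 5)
Your proposal is correct and follows essentially the same route as the paper: Theorem~\ref{ES:thm:ge_vs_rademacher} with $c = B_{\operatorname{in}}+B_{\operatorname{out}}$, the vector-contraction Lemma~\ref{ES:lem:Maurer}, Dudley's inequality with radius $\sqrt{m}\,B_{\operatorname{out}}$, the covering-number corollary, and the same constant bookkeeping yielding the factor $8B_{\operatorname{out}}$. The only difference is that you derive the integral estimate $\int_0^\alpha \sqrt{\log(1+\beta/t)}\,\mathrm{d}t \leq \alpha\sqrt{\log(e(1+\beta/\alpha))}$ from scratch via Jensen's inequality (correctly), whereas the paper simply cites it as Lemma~C.9 of the Foucart--Rauhut reference.
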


\begin{proof}
For the proof it remains to bound the Rademacher complexity via Dudley's integral \eqref{ES:eq:dudley_bound},
for which in turn we use the covering number arguments from the previous subsection as follows,
\begin{eqnarray*}
       \mathcal{R}_m (\ell \circ \mathcal{H}_2^L) 
& =& \mathbb{E} \sup_{\mat{M} \in \mathcal{M}_2} \frac{1}{m} 
        \sum_{i=1}^m \sum_{k=1}^N \epsilon_{ik} M_{ik} \\
& \leq &   \frac{4\sqrt 2}{m}\int_0^{\sqrt{m} B_{\operatorname{out}}/2}  
        \sqrt{\log\mathcal{N}\left(\mathcal{M}_2, , \|\, \cdot \,\|_F, \epsilon \right)} 
        \;\mathrm{d} \epsilon \\
& \leq &   \frac{4\sqrt 2}{m}\int_0^{\sqrt{m} B_{\operatorname{out}}/2}  
        \sqrt{N^2 \cdot \log \left( 1 + \frac{4 M_L}{\epsilon} \right)} 
        \;\mathrm{d} \epsilon     \\    
&  & \quad +  \frac{4\sqrt 2}{m}\int_0^{\sqrt{m} B_{\operatorname{out}}/2}  
        \sqrt{nN \cdot \log \left( 1 + \frac{4 \|\mat{A}\|_{2\to 2}K_L}{\epsilon} \right)} 
        \;\mathrm{d} \epsilon     \\         
& \leq &   \frac{4\sqrt 2 N}{m}\int_0^{\sqrt{m} B_{\operatorname{out}}/2}  
        \sqrt{\log \left( 1 + \frac{4 M_L}{\epsilon} \right)} 
        \;\mathrm{d} \epsilon     \\    
&  & \quad +  \frac{4 \sqrt{2nN}}{m}\int_0^{\sqrt{m} B_{\operatorname{out}}/2}  
        \sqrt{\log \left( 1 + \frac{4 \|\mat{A}\|_{2\to 2}K_L}{\epsilon} \right)} 
        \;\mathrm{d} \epsilon     \\ 
& \leq &  2 \sqrt{2} B_{\operatorname{out}} \frac{N}{\sqrt{m}}
          \sqrt{\log{\left(e\left(1 +\frac{4 M_L}{\sqrt{m} B_{\operatorname{out}}/2  }\right)\right)}} \\
&  & \quad +  2 \sqrt{2} B_{\operatorname{out}} \sqrt{\frac{Nn}{m} }
    \sqrt{\log{\left(e\left(1 +\frac{4 K_L\|\mat{A}\|_{2\to 2}}{\sqrt{m} B_{\operatorname{out}}/2}\right)\right)}}.
\end{eqnarray*}
where we have used the following inequality for the last step \citeES[Lemma C.9]{foucart_mathematical_2013}
\begin{equation}
\int_0^\alpha \sqrt{\log \left(1 + \frac{\beta}{t}\right)} \;\mathrm{d} t
\leq 
\alpha\sqrt{ \log \left(e(1+\beta/\alpha)\right) } 
\quad\text{for}\quad 
\alpha,\beta>0.
\end{equation}
The theorem is obtained using Theorem \ref{ES:thm:ge_vs_rademacher} with the upper bound 
$c =  B_{\operatorname{in}} + B_{\operatorname{out}}$ 
for the functions output from {\eqref{ES:eq:output_matrix}}, 
and bounding the Rademacher complexity term \eqref{ES:rademacher_2} 
with the generalized contraction principle Lemma \ref{ES:lem:Maurer},
which in turn is bounded using Dudleys integral as above. 
\end{proof}

Let us make the reasonable assumption that $\tau \|\mat{A}\|_{2 \to 2} \leq 1$.
Taking into account that $M_L\leq \tau \|\mat{A}\|_{2 \to 2} \|\mat{Y}\|_F L$, see also 
\eqref{ES:eq:linear_growth_L}, i.e., $M_L$ scales at most linearly in $L$ (which remains inside the logarithm),
%and $K_L$ depends exponentially on $L$, see \eqref{ES:eq:K_L_simplified}, we have
and since $K_L$ depends quadratically on $L$, see \eqref{ES:eq:K_L_simplified}, we have
\begin{equation}
\mathcal{L}(h) - \hat{\mathcal{L}}(h) \lesssim
 \frac{N}{\sqrt{m}} \sqrt{\log(L)} + \sqrt{\frac{Nn}{m}}  \sqrt{\log(L)}
 \sim \sqrt{\frac{\log(L) N(N+n)}{m}} \sim \sqrt{\frac{\log(L) N^2}{m}},
\end{equation}
where the last relation holds under the reasonable assumption that $1 \leq n \leq N$.
This estimate is stated more rigorously in the following corollary.

%\textcolor{red}{OLD (remove):}and $K_L$ depends exponentially on $L$, see %\eqref{ES:eq:K_L_simplified}, we have
%\begin{equation}
%\mathcal{L}(h) - \hat{\mathcal{L}}(h) \lesssim
% \frac{N}{\sqrt{m}} \sqrt{\log(L)} + \sqrt{\frac{NnL}{m}},
%\end{equation}
%where $\lesssim$ hides the constants and logarithmic terms. We formalize this %result below.}

% Taking into account that $M_L$ only scales linearly in $L$ (which remains inside the logarithm),
% and $K_L$ depending on $L$ exponentially, this results (very roughly speaking, ignoring any constant and logarithmic terms) in a scaling like
% \begin{equation}
% \approx \frac{N}{\sqrt{m}} \sqrt{\log(L)} + \sqrt{\frac{NnL}{m}}.
% \end{equation}

\begin{corollary}\label{ES:corollary:main_result}
Consider the hypothesis space $\mathcal{H}_2^L$ defined in \eqref{ES:eq:hypothesis_class:H2} {and assume that $\tau \|\mat{A}\|_{2 \to 2}^2 \leq 1$.}
With probability at least $1-\delta$, for all $h \in \mathcal{H}_2^L$, the generalization error 
is bounded as
\begin{eqnarray*}
\mathcal{L}(h) 
& \leq &
\hat{\mathcal{L}}(h) 
+ 8 B_{\operatorname{out}} \sqrt{\frac{Nn}{m}}
\sqrt{1 + \log \left(2 + \frac{8 L(L+3) \tau \|\mat{Y}\|_F \|\mat{A}\|_{2 \to 2}}{\sqrt{m}B_{\operatorname{out}}}\right)} \\
&  & \quad 
+ 8 B_{\operatorname{out}} \frac{N}{\sqrt{m}}
\sqrt{\log{e\left(1 +
\frac{8 \tau L\|\mat{A}\|_{2\to 2} \|\mat{Y}\|_F}{\sqrt{m} B_{\operatorname{out}}  }\right)}
} 
+ 4 (B_{\operatorname{in}} + B_{\operatorname{out}}) \sqrt{\frac{2\log(4/\delta)}{m}},
\label{eq:ge_vs_rademacher:2}
\end{eqnarray*}
where $K_L$ is the perturbation bound in \eqref{ES:eq:perturbation_bound}. 
\end{corollary}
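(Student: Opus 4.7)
The strategy is to deduce this corollary directly from Theorem \ref{ES:theorem:main_result} by inserting the simplified bounds on $K_L$ and $M_L$ available under the additional assumption $\tau \|\mat{A}\|_{2\to 2}^2 \leq 1$. No new probabilistic or geometric argument is needed, and in particular no additional invocation of Dudley's inequality or of the contraction lemma; the work is entirely algebraic bookkeeping inside the logarithms of the generalization bound.

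First I would control the perturbation constant via the already proven estimate \eqref{ES:eq:K_L_simplified}, which gives $K_L \leq \tau \|\mat{Y}\|_F L(L+3)$ as soon as $\tau \|\mat{A}\|_{2\to 2}^2 \leq 1$. In parallel I would derive a matching linear-in-$L$ bound on $M_L$: by Remark \ref{ES:rem:tau_and_norm_A} the hypothesis $\tau \|\mat{A}\|_{2\to 2}^2 \leq 1$ implies $\|\mat{I}-\tau \mat{A}^\top \mat{A}\|_{2\to 2} \leq 1$, so each of the $L$ terms of the geometric sum defining $M_L$ is at most $1$, whence $M_L \leq \tau \|\mat{A}\|_{2\to 2}\|\mat{Y}\|_F L$.

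Finally, I would substitute these two inequalities into the bound of Theorem \ref{ES:theorem:main_result} and use the monotonicity of $u \mapsto \sqrt{\log(e(1+u))}$ in $u \geq 0$ to pass the upper bounds under the square roots. This immediately yields the second logarithmic factor of the corollary, and for the first it suffices to use $\log(e(1+x)) = 1 + \log(1+x) \leq 1 + \log(2+x)$, which accounts for the slightly weakened ``$2+\cdot$'' appearing in the statement. Since every step is a monotone substitution of a larger quantity into a monotone function, there is no substantive obstacle; the only thing to verify carefully is that each replacement inside a $\log$ indeed enlarges its argument, so that the overall estimate remains an upper bound.
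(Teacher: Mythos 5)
Your proposal is correct and coincides with the paper's own (largely implicit) derivation: the corollary is obtained from Theorem~\ref{ES:theorem:main_result} purely by substituting $K_L \leq \tau\|\mat{Y}\|_F L(L+3)$ from \eqref{ES:eq:K_L_simplified} and $M_L \leq \tau\|\mat{A}\|_{2\to 2}\|\mat{Y}\|_F L$ (via Remark~\ref{ES:rem:tau_and_norm_A}) into the logarithms, using monotonicity and $\log(e(1+x)) = 1+\log(1+x) \leq 1+\log(2+x)$. No gaps.
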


{Theorem \ref{ABHRES:thm:main_result} follows immediately from Theorem \ref{ES:theorem:main_result} and Corollary \ref{ES:corollary:main_result}. Indeed, we have seen in \eqref{ES:eq:linear_growth_L} that
\begin{equation}
    \tau \|\mat{A}\|_{2\to 2} \|\mat{Y}\|_F\leq \sqrt{m}B_{\operatorname{in}}.
\end{equation}

% \textcolor{red}{OLD:}
% Using that $B_{\operatorname{in}}= B_{\operatorname{out}}$ by assumption we therefore have
% \[
% \log \left(2 + \frac{28 \cdot 2^L \tau \|\mat{Y}\|_F \|\mat{A}\|_{2 \to 2}}{\sqrt{m}B_{\operatorname{out}}}\right) \leq \log(2 + 28 \cdot 2^L) \leq \log(2^{L+5}) = \log(2)(L+5).
% \]
 
%%%%%%%%%%%%%%%%%%%%%%%%%%%%%%%%%%%%%%%%%%%%%%%%%%%%%%%%%%%%%%%%%%%%%%
%\textcolor{blue}{OLD:
%Using that $B_{\operatorname{in}}= B_{\operatorname{out}}$ by assumption,
%and, for simplicity also assuming that $L \geq 2$ such that 
%$2 + 4 L(L+3) \leq (4L)^2$,
%we therefore have
%\[
%\log \left(2 + \frac{4 L(L+3) \tau \|\mat{Y}\|_F \|\mat{A}\|_{2 \to %2}}{\sqrt{m}B_{\operatorname{out}}}\right) 
%\leq 
%\log(2 + 4 L(L+3)) \leq 2 \log(4L).
%%%\log(2 + 4 L(L+3)) \leq \log(L^3) = 3 \log(L).
%\]
%}
%%%%%%%%%%%%%%%%%%%%%%%%%%%%%%%%%%%%%%%%%%%%%%%%%%%%%%%%%%%%%%%%%%%%%%

Using that $B_{\operatorname{in}}= B_{\operatorname{out}}$ by assumption,
and, for simplicity also assuming that $L \geq 2$ such that 
$2 + 8 L(L+3) \leq (5L)^2$,
we therefore have
\[
\log \left(2 + \frac{8 L(L+3) \tau \|\mat{Y}\|_F \|\mat{A}\|_{2 \to 2}}{\sqrt{m}B_{\operatorname{out}}}\right) 
\leq 
\log(2 + 8 L(L+3)) \leq 2 \log(5L).
%%%\log(2 + 4 L(L+3)) \leq \log(L^3) = 3 \log(L).
\]

Plugging in these estimates and using that $\mathcal{H}_1^L\subseteq\mathcal{H}_2^L$ gives the statement of Theorem~\ref{ABHRES:thm:main_result}.}
%{Note that in Theorem \ref{ABHRES:thm:main_result}, it is assumed that $\tau \|\mat{A}\|^2_{2\to 2}\leq 1$. Under this condition, we have seen in \eqref{ES:eq:linear_growth_L} that:
%\begin{equation}
%    \tau L\|\mat{A}\|_{2\to 2} \|\mat{Y}\|_F\leq L\sqrt{m}B_{\operatorname{in}}.
%\end{equation}

%Using this fact and $\mathcal{H}_1^L\subseteq\mathcal{H}_2^L$, Theorem \ref{ABHRES:thm:main_result} follows immediately from Theorem \ref{ES:theorem:main_result} and Corollary \ref{ES:corollary:main_result}.

As already pointed out above, the deep thresholding network we analyse is, due to the weight sharing, a recurrent neural network.
The authors of  \citeES{dasgupta_sample_1996} derive VC-dimension bounds of recurrent networks for recurrent perceptrons with binary outputs. The VC-dimension of recurrent neural networks for different classes of activation functions has been studied by the authors of  \citeES{koiran_vapnik-chervonenkis_1998}. However, their results do not immediately apply to our setup, since they focus on one-dimensional inputs and outputs, which of course does not suit our vector-valued regression problem, and moreover, would correspond to taking just one single measurement.
Even in the scenario which is closest to ours, namely fixed piecewise polynomial activation functions with $n=1$, their VC dimension bound scales between $\mathcal{O}(Lw)$ and $\mathcal{O}(Lw^2)$, where $L$ is the number of layers and $w$ is the number of trainable parameters in the network. In our case, the number of trainable parameters is equal to the dimension of the orthogonal group $O(N)$, which is $N (N - 1) / 2$. Therefore, their bounds scale between $\mathcal{O}(L N^2)$ and $\mathcal{O}(L N^4)$.
In contrast, if $n = 1$, our bound scales only like $\mathcal{O}(N\sqrt{\log(L)})$.
%\textcolor{red}{OLD: $\mathcal{O}(LN)$.}
Besides, we only make use of Lipschitzness of the activation function.
%%%%%%%%%%%%%%%%%%%%%%%%%%%%%%%%%%%%%%%%%%%%%%%%%%%%%%%%%%%%%%%%%%%%%%%%%%%%%%
%%%%%%%%%%%%%%
\section{Thresholding Networks for Sparse Recovery}
\label{ABHRESsec:5}

{
In Theorem \ref{ABHRES:thm:main_result} and Corollary \ref{ES:corollary:main_result}, we have provided a worst-case bound on the sample complexity that holds uniformly over the hypothesis space and for any arbitrary data distribution. It is interesting to see if this bound can be improved for data distributions limited to low complexity sets distributions, for example over the set of sparse vectors. ISTA is used mainly in sparse coding and recovery tasks, therefore it is reasonable to ask if the generalization error behaves similarly when it is applied to sparse recovery tasks. 
}

% In this section, we report on the numerical experiments performed to practically test our findings in the previous section. 
% Note that we do not necessarily aim to achieve state of the art results in terms of reconstruction, 
% but instead we pursue two main goals in this section. 
% Firstly, we would like to give evidence that the proposed joint reconstruction algorithm and dictionary learning is meaningful. 
% Secondly, we are interested in the generalization error and its scaling with respect to training parameters.  Specifically, we have obtained a sample complexity bound that holds uniformly over the hypothesis space and for any distribution. Although the bound is quite simple and general, it is interesting to see if we expect improvements when it is applied to data from low complexity distributions. ISTA is used mainly in sparse coding and recovery, and therefore we consider a similar scenario.

We consider  a synthetic dataset as well as the MNIST data set \citeES{lecun1998mnist}. For both cases, the measurement matrix is a random Gaussian matrix properly normalized to guarantee convergence of soft-thresholding algorithms. The synthetic data is generated for different input and output dimensions and sparsity level. The original dictionary is a random orthogonal matrix. The default parameters are $N=120$, $n=80$ and sparsity equal to $10$. Sparse vectors are generated by choosing their support uniformly randomly and then picking non-zero values according to the standard normal distribution. The experiments for the synthetic data are repeated at least 50 times, and the results are averaged over the repetitions. For both the MNIST and the synthetic dataset, we sweep over $L, N$ and $n$ to see how the generalization error behaves.    

There are different ways to implement the orthogonality constraint for weight matrices. One way  \citeES{lezcano2019cheap} is based on the fact that the matrix exponential mapping provides a bijective mapping from the skew-symmetric matrices onto the special orthogonal group $SO(N)$. However, we use the alternative method of adding a regularization term  $\|\vec{I} - \mat{\Phi}^\top \mat{\Phi}\|_F$ (or another matrix norm) to the loss function, which means to penalize $\mat{\Phi}$ that is far from being orthogonal. 

We choose different number of measurements  and layers  for both datasets. For each one, the network is trained for a few epochs. Mostly not more than 10 epochs are required to get first promising results, and often times, the loss goes down very slowly after 10 epochs. 

All experiments (see Figure \ref{fig:abserror_MNIST}) show that it is possible to recover the original vectors $\vec{x}$ with as few as 10 layers, which is less than typical when using ISTA (see supplementary materials for some visuals). Note that the error in the MNIST experiments is the pixel-based error normalized by the image dimension and MNIST pixels are all normalized between $0$ and $1$.
We have chosen {ISTA} with a similar structure and 5000 iterations.  The result warrants the applicability of dictionary learning for sparse reconstruction.
\begin{figure}[th]
    \centering
    \begin{subfigure}[b]{0.47\textwidth}
\includegraphics[width=\textwidth]{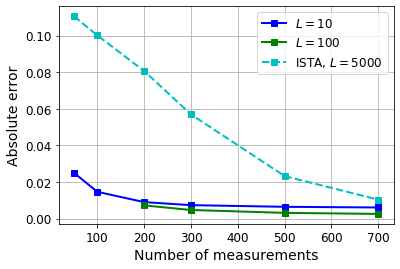}
        \caption{Absolute reconstruction error for different measurements of MNIST}
        \label{fig:abserror_MNIST}
    \end{subfigure}
\centering
    \begin{subfigure}[b]{0.47\textwidth}
\includegraphics[width=\textwidth]{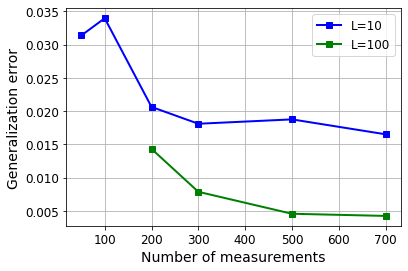}
        \caption{Generalization error for different measurements of MNIST}
        \label{fig:generror_MNIST}
    \end{subfigure}
\caption{MNIST dataset}
\label{fig:mnist_dataset}
\end{figure}

Figure \ref{fig:generror_synth_n} confirms the dependence of the generalization error on the number of layers $L$.
Increasing the number of 
layers increases the generalization error for a fixed number of measurements $n$.
However,  the generalization error decreases by increasing the number of layers for MNIST dataset.
For both synthetic and the MNIST dataset, it seems that increasing the number of measurements decreases the generalization error.  
See Figure \ref{fig:generror_MNIST}, \ref{fig:generror_synth_n} and \ref{fig:generror_synth_N}. Besides, Figure \ref{fig:generror_synth_N} shows that increasing $N$ increases the generalization error. Therefore, our bound scales correctly with the input dimension and the number of layers but incorrectly with the number of measurements. Although not predicted by our theoretical results, this is not unexpected. Note that the number of measurements $n$ is not essential here, since it can always be upper bounded by $N$. Therefore, the theoretical bound on the generalization error (see \eqref{eq:general_error_rough}, and Theorem \ref{ABHRES:thm:main_result} as well as Corollary \ref{ES:corollary:main_result} for more details) can be lower and upper bounded via
\[
\sqrt{\frac{\log(L)}{m}} N \leq \sqrt{\frac{\log(L)}{m}} (N + \sqrt{Nn}) \leq 2\sqrt{\frac{\log(L)}{m}} N.
\]
Furthermore, as mentioned above, the sample complexity is supposed to apply to all possible input distributions, If we restrict ourselves to distributions over low complexity sets, then various worst-case bounds in our analysis might be improved. The experiments seem to confirm this intuition. Namely, for the MNIST dataset there is a clear improvement with increasing  the number of measurements and the number of layers. This is intuitive from a compressive sensing standpoint, as more number of layers in ISTA leads to better results and more measurements provide more information about the input.

On the other hand, the synthetic dataset shows that the generalization error increases with the input dimension and the number of layers. Note that the bound of this chapter is obtained for a very general setting where nothing is assumed on the data structure. Additional assumptions on the structure of the problem, i.e., sparsity can be used to improve the current bound. Nonetheless, the linear dimension dependency of the current bound makes it a very good baseline for future comparisons.

\begin{figure}[h]
    \centering
    \begin{subfigure}[b]{0.47\textwidth}
\includegraphics[width=\textwidth]{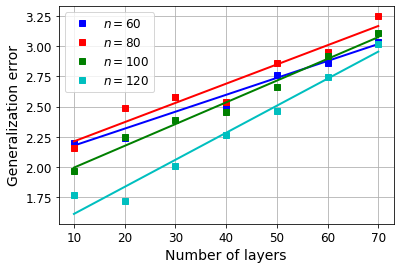}
        \caption{Generalization error for different measurements of synthetic data ($N=120$)}
        \label{fig:generror_synth_n}
    \end{subfigure}
\centering
    \begin{subfigure}[b]{0.47\textwidth}
\includegraphics[width=\textwidth]{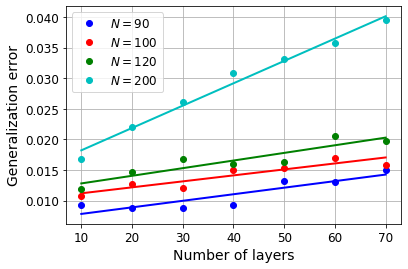}
        \caption{Generalization error for different input dimensions of synthetic data ($n=80$)}
        \label{fig:generror_synth_N}
    \end{subfigure}
\caption{Generalization error for synthetic dataset}
\label{fig:synth_dataset}
\end{figure}

% \begin{figure}
%     \centering
%     \begin{subfigure}[b]{0.45\textwidth}
% \includegraphics[width=\textwidth]{figures/recon_different.png}
%         \caption{Different matrix}
%         \label{fig:recon_different}
%     \end{subfigure}
% \centering
%     \begin{subfigure}[b]{0.45\textwidth}
% \includegraphics[width=\textwidth]{figures/generror_MNIST.png}
%         \caption{Generalization error for different measurements of MNIST}
%         \label{fig:generror_MNIST}
%     \end{subfigure}
% \end{figure}

{The model that is used for our experiments shares the weights across layers conforming to our theoretical setup. However, we can improve the performance of this method by using ideas similar to LISTA literature. Many works on LISTA use a different dictionary at each layer, which eases the training procedure and can lead to potentially better results.}

% There are many ideas for improving the performance of this method experimentally. Firstly, it has been noted in many works that training RNN architectures are difficult in general. Many works on LISTA, however, use a different dictionary at each layer, which eases the training procedure. We expect that the proposed method can be improved and tested on various benchmarks with ideas borrowed from  research on LISTA.

{
\section{Conclusion and Outlook}
}

In this chapter, we have derived a generalization bound for an unfolded ISTA algorithm where, similar to LISTA, {the dictionary is  learned via learning the reconstruction algorithm, interpreted as neural network with shared layers.} 
To the best of our knowledge, this is the first result of its kind. Our proof utilizes a Rademacher complexity analysis and obtains generalization bounds with only linear dependence on the dimension.
The comparison of our theoretical results and the numerical results suggests that we might be able to obtain tighter generalization bounds of neural networks for structured input data. Future works consist of also considering more intricate structures with more flexible weight sharing between the layers and also learning parameters {such as the stepsizes and thresholds simultaneously}.

\bigskip

%%%%%%%%%%%%%%%%%%%%%%%%%%%%%%%%%%%%%%%%%%%%%%%%%%%%%%%%%%%%%%%%%%%%%%%%%%%%%%%%%%%%%%%%%%%%

\begin{acknowledgement}
%If you want to include acknowledgments of assistance and the like at the end %of an individual chapter please use the \verb|acknowledgement| environment -- %it will automatically be rendered in line with the preferred layout.
The authors would like to thank Sebastian Lubjuhn for proofreading an earlier version of this paper and giving valuable suggestions for improvement.
The third author acknowledges funding from the Deutsche Forschungsgemeinschaft (DFG) through the project \emph{Structured Compressive Sensing via Neural Network Learning} (SCoSNeL, MA 1184/36-1) within the SPP 1798 \emph{Compressed Sensing in Information Processing} (CoSIP).
\end{acknowledgement}
% %
% \section*{Appendix}
% \addcontentsline{toc}{section}{Appendix}
% %
% %

\bibliographystyleES{spmpsci}                 %% replace by your initials  e.g., LG -> HK
\bibliographyES{bibfileES}                %% replace by your initials  e.g., LG -> HK

%%%%%%%%%%%%%%%%%%%%%%%%%%%%%%%%%%%%%
%\bibliographyAB{bibfileAB}                %% initials AB?

%\bibliographyABHRES{bibfileABHRES}                %% initials AB HR ES ??
%% ABHRES
	
\end{document}